\setlist[itemize,1]{leftmargin=\dimexpr 26pt-.065in}
\theoremstyle{plain}
\newtheorem{theorem}{Theorem}[section]
\theoremstyle{definition}
\newtheorem{definition}[theorem]{Definition}
\newtheorem{proposition}[theorem]{Proposition}
\newtheorem{remark}[theorem]{Remark}
\newtheorem{corollary}[theorem]{Corollary}
\newtheorem{lemma}[theorem]{Lemma}
\newtheorem{example}[theorem]{Example}
\renewenvironment{proof}{{\noindent \bf  Proof.}}{\qed}
\begin{document}
\title[\hfilneg Generalised Fractional Evolution Equations of Caputo Type\hfil]{{\itshape G\MakeLowercase{eneralised} F\MakeLowercase{ractional} e\MakeLowercase{volution} e\MakeLowercase{quations of }  C\MakeLowercase{aputo } t\MakeLowercase{ype} }}

\author{M. E. H\MakeLowercase{ern\'andez}-H\MakeLowercase{ern\'andez}, V. N. K\MakeLowercase{olokoltsov}, L. T\MakeLowercase{oniazzi}}

\address{M. E. Hern\'andez-Hern\'andez \newline
Department of Statistics, University of Warwick, Coventry, United Kingdom.}
\email{M.Hernandez-Hernandez@warwick.ac.uk}

\address{V. N. Kolokoltsov \newline
Department of Statistics, University of Warwick, Coventry, United Kingdom,
\newline
And Associate member  of Institute of Informatics Problems, FRC CSC RAS.}\email{V.Kolokoltsov@warwick.ac.uk}

\address{L. Toniazzi\newline
Department of Mathematics, University of Warwick, Coventry, United Kingdom. }
\email{L.Toniazzi@warwick.ac.uk}

\subjclass[2010]{34A08, 26A33, 34A12, 60H30, 35S15,  34A05}
\keywords{Fractional evolution equation, Generalised derivatives of Caputo type,  Mittag-Leffler functions, Feller process, $\beta$-stable subordinator, Stopping time, Boundary point.}

\setcounter{page}{1}

\maketitle

\begin{abstract}
This paper is devoted to the study of generalised time-fractional evolution equations involving    Caputo  type  derivatives.  Using analytical methods and probabilistic arguments   we obtain well-posedness results   and stochastic representations for the solutions. These results encompass  known linear and non-linear equations from  classical fractional  partial differential equations such as  the  time-space-fractional diffusion equation, as well as their far reaching  extensions. \\
Meaning is given to a probabilistic generalisation of Mittag-Leffler functions.
\end{abstract}
\section{Introduction}
\label{section1}
The main purpose of this article is to prove well-posedness and stochastic representation for the solutions of the following evolution equations
\begin{align}\nonumber
-_{t}D^{(\nu)}_{a+*}u(t,x)&= -Au(t,x)-g(t,x), &&(t,x)\in(a,b]\times \mathbb{R}^d,\\
 u(a,x)&=\phi_a(x), && x\in\mathbb{R}^d, \label{Problem1}
\end{align}
and 
\begin{align}\nonumber
-_{t}D^{(\nu)}_{a+*}u(t,x)&= -Au(t,x)-f(t,x,u(t,x)),\ &&(t,x)\in(a,b]\times \mathbb{R}^d,\\
 u(a,x)&=\phi_a(x), && x\in\mathbb{R}^d, \label{Problem2}
\end{align}
where $-_{t}D^{(\nu)}_{a+*}$ is a generalised differential operator of  Caputo type of order less than $1$ acting on the time variable $t\in [a,b]$ (as introduced in \cite{KVFDE}), $A$ is the (infinitesimal) generator of a Feller semigroup  on $C_\infty(\mathbb{R}^d)$ acting on the variable $x\in\mathbb R^d$, $\phi_a$ belongs to the domain  of the generator $A$ (denoted by $ Dom(A)$), $g:[a,b]\times \mathbb{R}^d\to \mathbb R$ is a   bounded measurable function, and  $f:[a,b]\times \mathbb R^{d}\times \mathbb R\to \mathbb R$ is a non-linear function satisfying a certain Lipschitz condition.\\

Since Caputo derivatives of order $\beta \in (0,1)$ are special cases of the operators $-_{t}D_{a+*}^{(\nu)}$, the evolution equations   in (\ref{Problem1})-(\ref{Problem2}) include as particular cases a variety of    equations studied in the theory of fractional partial differential equations (FPDE's).  The latter equations have been successfully used for describing diffusions in disordered media, also called   \textit{anomalous diffusions}, which include both  \textit{subdiffusions}     and  \textit{superdiffusions}. Subdiffusion phenomena are usually related to  time-FPDE's,  whereas superdiffusions are related to   space-FPDE's.  We refer, e.g.,  to    \cite{Bouchaud1990}, \cite{carpinteri1997}, \cite{Wings},  \cite{FMainardi1997}, \cite{FMainardi2010}, \cite{kilbas2}, \cite{Leo01}, \cite{KV}, \cite{LoHiBe2011}  \cite{Meerschaert2012}, \cite{podlubny},  \cite{zaslavsky} \cite{Kokine} (and  references cited therein) for an account of historical notes, theory and applications of fractional calculus, as well as different analytical and numerical methods  to address both  fractional  ordinary differential equations (FODE's) and fractional partial differential equations.   
  
In the classical fractional setting,   special cases  of equation (\ref{Problem1}) include  \textit{fractional Cauchy problems}, that is  initial value problems  of the form 
 \begin{align}\nonumber
 -_{t}D_{a+*}^{\beta} u(t,x)  &= -A u (t,x), && (t,x) \in [a,b]\times  \mathbb{R}^d,\\
 u(a,x) &= \phi_a(x), && x \in \mathbb{R}^d, \quad \beta \in (0,1),  \label{E:fEv}
 \end{align}
 where $_{t}D_{a+*}^{\beta}$ stands for the Caputo derivative of order $\beta$ (acting on the variable $t$).
Equations of the type in  (\ref{E:fEv}) have been actively studied in the literature.   Amongst the standard analytical  approaches   to solve FPDE's,    \textit{the   Laplace-Fourier  transform} method    plays an important role (see, e.g.,  \cite{kai},   \cite{edwards}, \cite{kilbas2},   \cite{podlubny}, \cite{samko}, and references therein).   From a probabilistic point of view, interesting  connections have been found between the  solution of  time-FPDE's and the transition densities of time-changed Markov  processes (see for example \cite{meerspecneg16}, \cite{Bae05},  \cite{gorenflo98},  \cite{KV}, \cite{KV0},   \cite{Meerschaert2012}, \cite{non1990}).  For instance, a very standard example of  the equation (\ref{E:fEv}), first studied by  Schneider and Wyss   \cite{Wyss22} and Kochubei \cite{Ko13} (see also   \cite{Bouchaud1990}, \cite{FMainardi1997}, \cite{Meerschaert2012} and references therein), is given by  the time-\textit{fractional diffusion   equation}, where    $-A= \,-\, \frac{1}{2} \Delta$,  $\Delta$ being the Laplace operator.  The work in \cite{Bae01} provides strong solutions for $A$ being the generator of a Feller process.   The work in \cite{Leo13} provides strong solutions for $A$ being the generator of a Pearson diffusion on an interval. In these cases the    fundamental  solution (or Green function)  corresponds to the probability density of a self-similar non-Markovian stochastic process, given by  the time-changed  transition probability function of the diffusion associated with $A$ by the hitting time of a $\beta$-stable subordinator.  \\    
 An example of equation  (\ref{E:fEv}) (with a potential),  was studied in  \cite{kochubei}, wherein the authors determined the fundamental solution of the non-homogeneous Cauchy problem associated with    the   second-order differential operator with variable coefficients given by 
 \[ 
 A =\sum_{i,j}^d a_{ij} (x) \frac{\partial^2}{\partial x_i \partial x_j} + \sum_{j=1}^d b_{j}(x) \frac{\partial}{\partial x_j} + c(x).
\]
The well-posedness of the (abstract) Cauchy problem (\ref{E:fEv}) for $A$ being a closed operator in a Banach space was studied in \cite{Bazh98}. Moreover, evolution equations of the type (\ref{E:fEv}) arise, for example, as the limiting evolution of an uncoupled and properly scaled \textit{continuous time random  walk} (CTRW)  with the waiting times in the \textit{domain of attraction of $\beta-$stable laws}.   This  probabilistic model and some of its extensions  have been  widely studied (see, e.g., \cite{Meerschaert2012}, \cite{scalas}, \cite{KV0}, and references therein). The authors in     \cite{KM-1} addressed the regularity of  the non-homogeneous time-space fractional linear equation 
\begin{align*}\nonumber 
 _{t\,\,}\!D_{0+*}^{\beta} u(t,x)  &= -c (-\Delta)^{\alpha/2} u (t,x) + g(t, x), && x \in \mathbb{R}^d, \,\, t \ge0 ,\\ u(0,x) &= \phi_0(x), && x \in \mathbb{R}^d 
 \end{align*}
as well as the well-posedness for the fractional Hamilton-Jacobi-Bellman (HJB) type  equation
\begin{align*}\nonumber 
 _{t\,\,}\!D_{0+*}^{\beta} u(t,x)  &= -c (-\Delta)^{\alpha/2} u (t,x) + H(t, x, \nabla u(t,x)), && x \in \mathbb{R}^d, \,\,t \ge 0,\\ u(0,x) &= \phi_0(x), && x \in \mathbb{R}^d,
 \end{align*}
 for $\beta \in (0,1)$, $\alpha \in (1,2]$  and a positive constant $c > 0$.  \\

Using the results presented here, we are able to deduce some of the  results known for the previous cases, as well as to extend the analysis to more general situations (see, e.g., Section \ref{section2.2} for some possible choices of concrete operators $-_{t}D^{(\nu)}_{a-*}$).  \\
We will first show the well-posedness of problem (\ref{Problem1}) (for two notions of solution) and the stochastic  representation for both notions of solution (see Theorem \ref{MThmCapSR}). The stochastic representation for the solution $u$, will be given by
\begin{equation}
u(t,x)=\mathbf E\left[\phi_a \left( X^{x,A}(\tau^{(\nu)}_a(t))\right)+\int_0^{\tau^{(\nu)}_a(t)}g(X^{t,(\nu)}_{a+*}(s), X^{x,A}(s))d s\right],
\label{SRintro}
\end{equation}

where $\{X^{t,(\nu)}_{a+*}(s)\}_{s\ge 0}$ is the \emph{decreasing } $[a,b]$-valued stochastic process generated by $-_{t}D^{(\nu)}_{a+*}$ started at $t\in[a,b]$, $\{ X^{x,A}(s)\}_{s\ge 0}$ is the stochastic process generated by $A$ started at $x\in\mathbb R^d$, $\tau^{(\nu)}_a(t)$ is the first time $\{X^{t,(\nu)}_{a+*}(s)\}_{s\ge 0}$ hits $\{a\}$. Note that the stochastic representation (\ref{SRintro})  features the (time-changed) process $\{X^{x,A}(\tau^{(\nu)}_a(t))\}_{t\ge 0}$.\\
 For $A$ bounded  and a stronger assumption on the function $\nu$ (see assumption (H1b)), we will  give the series representation to the solution of problem (\ref{Problem1})
\begin{equation}
 u(t,x)=\sum_{n=0}^\infty ((AI^{(\nu)}_{a+})^n\phi_a)(t,x)+\sum_{n=0}^\infty ((AI^{(\nu)}_{a+})^n  I^{(\nu)}_{a+} g)(t,x),
\label{seriesintro}
\end{equation}

where $I^{(\nu)}_{a+}$ is the potential operator of the semigroup generated by the (generalised) RL fractional operator $-_{t}D^{(\nu)}_{a+}$ (see Theorem \ref{CapSeriesR}). The series in (\ref{seriesintro}) provides a generalisation of a certain class of Mittag-Leffler functions. To see this take $A=\lambda$, $\lambda \in \mathbb R$, $a=0$ and $-_{t}D^{(\nu)}_{a+*}=-_{t}D^{\beta}_{a+*}$, the Caputo derivative of order $\beta\in (0,1)$, then $I^{(\nu)}_{a+}=I^{\beta}_{a+}$,  the RL fractional integral of order $\beta$, and 
\[
 u(t,x)=\phi_a(x) E_\beta(\lambda t^\beta)+\int_0^t g(t-y,x)\beta t^{\beta-1}\frac{d}{d y}E_\beta(\lambda y^\beta) d y,
\]
where $E_\beta( z):=\left(\sum_{n=0}^\infty\frac{ z^n}{\Gamma(\beta n+1)}\right)$ (see \cite[Theorem 7.2]{kai} for example). By approximating  the generator of a Feller process $A$ with bounded operators (namely the Yosida approxiamtion) we show  the convergence of the series representation (\ref{seriesintro}) to the stochastic stochastic representation  (\ref{SRintro}) for the operator $A$ (see Theorem \ref{convgenerCaputo}).\\

As for the non-linear problem (\ref{Problem2}), we study the well-posedness following a similar strategy to the one used for the  non-linear equation studied by the authors in \cite{KV-2}. Namely, by means of the   the integral representation (\textit{mild form}) of the solution to the   linear problem (\ref{Problem1}), we reduce the analysis of (\ref{Problem2}) to a fixed point problem for a suitable  linear operator (see Theorem \ref{MThmNonlinear}). 
Let us mention that, even though in this work we do not include the HJB type case, our results for the generalised  non-linear equation (\ref{Problem2}) can be used to extend the well-posedness for   the corresponding equations of  HJB type. \\

The results concerning the series representations (\ref{seriesintro}) of the solutions to the linear evolution equation (\ref{Problem1}) and the well-posedness of the non-linear evolution equation (\ref{Problem2}) rely on the bounds in Theorem \ref{T:n-iter}. Theorem \ref{T:n-iter} is a consequence of assumption (H1b), which implies that for every $t,y\in [a,b]$, $s\in\mathbb R^+$, $\mathbf P[X^{t,(\nu)}_{a+*}(s)\ge y]\le \mathbf P[X^{t,\beta}_+(s)\ge y]$ where $X^{t,\beta}_+$ is some inverted $\beta$-stable subordinator  of order $\beta\in(0,1)$.\\

Let us briefly describe the two notions of solution used in this work for problem (\ref{Problem1}). We call $u\in C_{\infty}([a,b]\times \mathbb R^d)$ a \emph{solution in the domain of the generator} for problem  (\ref{Problem1}) with $g\in C_{\infty}([a,b]\times\mathbb R^d)$, $\phi_a\in Dom(A)$, if $u$ satisfies the two equalities in (\ref{Problem1}) and   $ u \in Dom(-D^{(\nu)}_{a+*}+A)$, the domain of the generator $-D^{(\nu)}_{a+*}+A$.\\
 This notion of solution is quite natural from the point of view of semigroup theory. To see this consider a strongly continuous semigroup $\{T_s\}_{s\ge 0}$ acting on a Banach space $B$, let  $G$ be its generator and $Dom(G)$ the domain of $G$. Suppose now that the potential operator $(-G)^{-1}$ is bounded on $B$, then $(-G)^{-1}:B\to  Dom(G)$ is a bijection and $G(-G)^{-1}g=-g$ (see \cite[Theorem 1.1']{dynkin1965}).  By viewing problem (\ref{Problem1}) as a Dirichlet problem of the form 
\[
Gu(t,x)=-g(t,x),\text{ in }(a,b]\times\mathbb R^d,\quad u(a,x)=\phi_a(x)\text{ on }\{a\}\times\mathbb R^d,
\]
  for $G=( -_{t}D^{(\nu)}_{a+}+A)$, where  $-D^{(\nu)}_{a+}$ is the generalised  Riemann-Liouville (RL)  fractional derivative, $\phi_a=0$, we will see that  $(-G)^{-1}$ is bounded. From the RL case we extend the definition to the Caputo case. Of course such definition of solution does not allow to choose the boundary condition $\phi_a$, as  $u(a,\cdot)$ is determined by the choice of $g\in B$.\\
The second notion of solution overcomes this issue. Roughly speaking, a function $u\in B([a,b]\times \mathbb R^d)$ is said to be a \emph{generalised solution} to  problem (\ref{Problem1}) if $u$ is the point-wise  limit of a certain sequence of solutions in the domain of the generator. The stochastic representation of  solutions in the domain of the generator allows us to pass to the limit and obtain well-posedness along with the stochastic representation (\ref{SRintro}) of the generalised solution.\\
All results of this work concerning solutions in the domain of the generator hold true (with no change in the proofs) if we substitute $\mathbb R^d$ with the closure of an open subset of $\mathbb R^d$, call it $X$, and we let $A$ be the generator of a Feller semigroup on $C_\infty(X)$. 

The paper is organized as follows. Section \ref{section2} sets  standard notation and gives a quick review about generalised Caputo type operators of order less than 1.  Section \ref{section3} introduces the generalised RL integral operator $I^{(\nu)}_{a+}$. Section \ref{section4} focuses on the well-posedness results for the equation (\ref{Problem1}) along with providing stochastic and series representations for the solutions. Section \ref{section6} deals with  the well-posedness of the  non-linear equation  (\ref{Problem2}).

\section{Preliminaries}\label{section2}
 
 \subsection{Notation}\label{section2.1}
Let $\mathbb{N}$ and $\mathbb{R}^d$  be the set of positive integers   and  the $d$-dimensional Euclidean space, $d\in\mathbb N$, respectively.    \\
For any subset $A \subset \mathbb{R}^d$,  we define the standard sets of functions
\begin{align*}
B(A):=&\{f: A\to \mathbb R:\text{ $f$ is bounded and Borel measurable}\},\\
C(A):=&\{f\in B(A):\text{ $f$ is continuous}\},\\
C_{\infty}(A):=&\{f \in C(A):\, f \text{ vanishes at infinity}\}.
\label{eq:}
\end{align*}
All these spaces are equipped with the usual sup-norm $\|\cdot\|$, making them Banach spaces.    For an open set $A\subset\mathbb R^d$ we define
\begin{align*}
C^k(A):=&\{f\in C(A): D^\gamma f\in C(A),\ \forall |\gamma|\le k\},\\
C^k_\infty(\bar A):=&\{f\in C_\infty(A): D^\gamma f\in C_\infty(A)\ \& \ D^\gamma f \text{ is uniformly continuous on }A,\ \forall |\gamma|\le k\},\\
C^\infty(\bar A):=&\cap_{k=1}^\infty C^k(\bar A)\quad\text{and}\quad C^\infty_\infty(\bar A):=\cap_{k=1}^\infty C^k_\infty(\bar A),
\end{align*}
where $\gamma$ is a multi-index, $D^\gamma$ the associated integer-order derivative operator, $\bar A$ denotes the closure of $A$, and for the last three spaces of continuous functions we identify the functions on $A$ along with their partial derivatives with their unique continuous extension to $\bar A$. If $A$ is compact we write $C^k(\bar A)=C^k_\infty(\bar A)$. \\
Two special spaces of continuous function will be of interest to us, namely
\begin{align*} 
C_a([a,b]):=&\{f\in C([a,b]): f(a)=0\},\quad \text{and } \\
C_{a,\infty}([a,b]\times X):=&\{f \in C_\infty([a,b]\times X): f(a,x)=0 \ \forall x\in X\},
\end{align*}
for $X\subset \mathbb R^d$, both equipped with the supremum norm, turning them into Banach spaces.\\
 When we write $\| f\|$ for some real-valued function $f:X\to \mathbb R$ we mean the supremum norm of $f$ over its domain.
If $L$ is a linear operator acting on a subset of a Banach space $B$ to a Banach space $\tilde B$, we denote by $Dom(L)$ the domain of $L$. If $L$ is bounded we denote its operator norm by $\|L\|$.\\
Notation $\Gamma (z)$ and $B(\alpha,\beta)$ stands for the Gamma and the Beta function, respectively.  For all $\alpha, \beta > 0$, the Beta function is defined by
\begin{equation*}
B(\alpha,\beta) := \int_0^1 u^{\alpha-1} (1-u)^{\beta-1} du.
\end{equation*} 
We shall use the following rather standard  identities
\begin{equation}\label{BetaG}
\Gamma(z+1) = z \Gamma (z), \quad \quad
B(\alpha, \beta) = \frac{\Gamma (\alpha)\Gamma (\beta)}{\Gamma (\alpha + \beta)},
\end{equation} 
and the inequality \begin{equation}\label{dragomir}
\Gamma (n a) > (n-1)! a^{2(n-1)} \big ( \Gamma (a) \big )^n,
\end{equation}
for  $n \in \mathbb{N}$ and $ a > 0$.
   Letters $\mathbf{P}$ and $\mathbf{E}$  are reserved for the probability and the mathematical expectation, respectively.  We will   use  the lower case letter $s$ as the time variable when indexing stochastic processes or semigroups (the letter $t$ will generally be used to denote the starting point of a process on $[a,b]$).  For a stochastic process $\{X^{z}(s)\}_{s\ge 0}$ the superscript $z$   means that the process starts at $z$. The notation $ \mathbf{E} \left[  f\left(X^{z} (s) \right ) \right]$ and   $\mathbf{E} \left[  f \left( X (s) \right )  | X(0) = z\right]$ are used interchangeably.\\ 
 For a topological space $X$ we write $\mathcal B(X)$ to denote its Borel $\sigma$-algebra.
 All the stochastic processes $\{X^z(s)\}_{s\ge 0}$ considered in this paper are assumed to be defined on some complete filtered probability space $(\Omega, \mathcal{F},\{\mathcal F_s\}_{s\ge0}, \mathbf{P})$ such that $\sigma(X^z(s))\subset \mathcal F_s$ for each $s\ge 0$, where $\sigma(X^z(s))$ is the smallest $\sigma$-algebra generated by $X^z(s)$. The notation a.e. stands for almost everywhere with respect to Lebesgue measure.
 
  \subsection{Feller processes}  Let  $\{T_s\}_{s \ge 0}$ be a strongly continuous semigroup  of linear bounded operators on a Banach space $(B, \|\cdot\|_B)$, i.e., $T_s:B\to B$ $s\in \mathbb R^+$, $T_{s+t}=T_sT_t \ \forall s,t\in\mathbb{R}^+$, $T_0=I$ the identity operator and $\lim_{s \to 0} \| T_s f - f \|_{B} = 0$ for all $f \in B$.  Its (infinitesimal) generator $L$ is defined  as the (possibly unbounded) operator $L : Dom(L) \subset B \to B$ given by the strong limit 
\begin{equation}\label{Gdef}
L  f  := \lim_{s \downarrow 0} \frac{T_s f- f}{ s}, \quad f\in Dom(L),
\end{equation}
where the domain of the generator $Dom(L)$   consists of those  functions $f \in B$ for which the limit in  (\ref{Gdef}) exists in the norm sense.  We denote the resolvent operator for $\lambda\ge0$ by $(\lambda-L)^{-1}$. Sometimes we use the notation $e^{Ls}=T_s$ for a semigroup $\{T_s\}_{s\ge0}$ with generator $L$. If $L$ is a closed operator and $D \subset Dom(L)$ is a subspace of $B$, then $D$ is called a \textit{core} for  $L$ if $L$ is the closure  in the graph norm of the restriction of $L$ to $D$.  If $D \subset B$ and  $T_s D  \subset D$ for all $s\ge 0$, then $D$ is said to be invariant (under the semigroup).

We say that a (time homogeneous) Markov process $Z=(Z(s))_{s\ge 0}$ taking values in $E \subset \mathbb{R}^d$   is a \textit{Feller process} (see, e.g., \cite[Section 3.6]{KV0})  if its semigroup $\{T_s\}_{s\ge0}$,  
defined  by
\[  T_s f (z)  := \mathbf{E} \left[  f \left( Z(s) \right )  | Z(0) = z\right], \quad s \ge 0, \,\,\,z \in E, \,\,\,  f \in B(E),
\]
 gives rise to a \textit{Feller semigroup}  when reduced to $C_{\infty} (E)$, i.e., it is a strongly continuous semigroup  on $C_{\infty} (E)$ and it is formed  by  positive linear contractions  ($0 \le T_s f \le 1$ whenever $0 \le f\le 1$). We denote the extension of a bounded linear operator on $C_\infty(E)$ to $B(E)$ by the same notation.

\subsection{Generalised fractional operators of Caputo type}\label{section2.2}

This section provides the basics on generalised fractional operators as introduced in \cite{KVFDE}, along with some properties and related definitions.

 Let $\nu: \mathbb{R} \times (\mathbb{R}^+ \backslash \{0\}) \to \mathbb{R}^+$ be a non-negative function of two variables. The following condition  will be always  assumed when dealing with generalised fractional operators.
\begin{quote}
\textbf{(H0)} The function $\nu(t, r)$ is  continuous as a function of two variables and continuously  differentiable in the first variable.    Furthermore,
  \begin{equation}\nonumber 
  \sup_t \int_0^{\infty}\min\{1,r\}  \nu (t,r)dr < \infty, \quad \sup_t \int_0^{\infty} \min\{1,r\} \Big | \frac{\partial}{\partial t} \nu(t,r)\Big |dr < \infty,
  \end{equation}
  and 
  \begin{equation}\nonumber   \lim_{\delta \to 0} \sup_t \int_{0 < r \le \delta} r \nu(t,r)dr = 0. 
   \end{equation}
\end{quote}

\begin{remark}
The second bound in (H0) is both a natural assumption for concrete examples (see, e.g., the L\'evy kernel (\ref{betax})) and a  natural assumption for the proof of \cite[Theorem 4.1]{KVFDE}. The last bound in (H0) is a tightness assumption also used in the proof of \cite[Theorem 4.1]{KVFDE}.
\end{remark}

 \begin{definition} Let $a,b \in \mathbb{R}$, $a < b$. For any function $\nu$ satisfying condition (H0), the operator $-D_{a+*}^{(\nu)}$, defined  by
  \begin{align}
  -D_{a+*}^{(\nu)} f(t)   &:= \int_0^{t-a} ( f(t-r) - f(t)) \nu(t,r)dr + (f(a) - f(t)) \int_{t-a}^{\infty}\nu(t,r)dr, \label{genC}
  \end{align}
  $ t\in(a,b],$ is called  \textit{the generalised Caputo type operator}.\\
The operator $-D_{a+}^{(\nu)}$, defined  by
  \begin{align}
  -D_{a+*}^{(\nu)} f(t)   &:= \int_0^{t-a} ( f(t-r) - f(t)) \nu(t,r)dr  - f(t) \int_{t-a}^{\infty}\nu(t,r)dr, \label{genRL}
  \end{align}
  $t\in(a,b],$  is called  \textit{the generalised RL type operator}.
\end{definition}

\begin{remark} 
Note that the  operator (\ref{genC}) is well-defined at least on $C^1_{\infty}([a,\infty))$ and that the  operator (\ref{genRL}) is well-defined at least on $C^1_{\infty}([a,\infty))\cap \{f(a)=0\}$.\\
The sign $-$ in the notation $-D_{a+*}^{(\nu)}$  is introduced to comply with the standard notation of fractional derivatives. \\
The subscript $t$ will be added to operators (\ref{genC}) and (\ref{genRL}) by denoting them as $  -_{t}D_{a+*}^{(\nu)}$ and $  -_{t}D_{a+}^{(\nu)}$, respectively, if we want to emphasise the variable they act on.
\end{remark}

 \subsubsection{Special cases: \textit{ the Caputo   derivatives of order $\beta \in (0,1)$.} }
 The classical fractional Caputo derivatives    are particular cases of the operator (\ref{genC}). Namely, on regular  enough functions $f$,
\begin{equation}\label{2.7a}
 \text{if }\quad  \small{ \nu(t,r)=\, -\, \frac{1}{\Gamma(-\beta) r^{1+\beta}} }, \normalsize  \quad \beta \in (0,1), \quad   \text{ then }\quad
-D_{a+*}^{(\nu)} f (t)  =  -D_{a+*}^{\beta} f(t),
\end{equation}
where  $D_{a+*}^{\beta}$  stands for the Caputo   derivative of order $\beta \in (0,1)$. Hence, 
\begin{align*}\label{Caputo}
  D_{a+*}^{\beta}f (t) &= \frac{1}{\Gamma (- \beta)} \int_0^{t-a} \frac{f(t-r) - f(t) }{r^{1+\beta}} dr - \frac{f(a) - f(t)}{ \Gamma (1-\beta) (t-a)^{\beta}}, \quad  \,\, \beta \in (0,1).
  \end{align*}
  For $\beta \in (0,1)$ and smooth enough functions $f$,  the expression in    (\ref{genC})  coincides with the standard analytical definition (\textit{Riemann-Liouville approach}) which is given in terms of the \textit{Riemann-Liouville fractional integral operator} and the standard differential operator of integer order (see, e.g.,  \cite{kai}, \cite{podlubny}, \cite{samko} and references therein).  \\
Other particular cases include the \textit{fractional derivatives of variable  order} $-D_{a+*}^{(\nu)}  \equiv \,- \, D_{a+*}^{\beta(t)} $, which  are obtained by taking $\nu$ as the function   
\begin{equation}\label{betax}
 \nu(t,r) =  \,- \, \frac{1}{\Gamma(-\beta(t))  r^{1+ \beta(t)} }
 \end{equation}
 with a suitable function $\beta : \mathbb{R} \to (0,1)$ (see \cite{KV-1}). 
Even more generally,   these operators include the  \emph{generalised distributed order fractional derivatives}:
\begin{equation}\label{Gdistributed}
-D_{a+*}^{(\nu)}f(t)\,=\, -\, \int_{-\infty}^{\infty} \omega(s,t) D_{a+*}^{\beta(s,t)} f(t)\, \mu (ds),   
\end{equation}
 where $\omega:\mathbb R\times [a,b]\to\mathbb R^+$ is a differentiable function in the second variable such that
 \[ 
\nu(t,r)= - \int_{-\infty}^{\infty} \omega (s,t) \frac{\mu(ds)}{\Gamma(-\beta(s,t))r^{1+\beta(s,t)}}
\] is a function satisfying condition (H0). In the classical fractional framework, particular cases of (\ref{Gdistributed}) have been studied for example in \cite{distributed1}, \cite{distributed2}. Let us mention that tempered L\'evy kernels of the form
 \begin{equation*}\label{2.7c}
  \small{ \nu(t,r)=\, -\, \frac{e^{-\lambda r}}{\Gamma(-\beta) r^{1+\beta}} }, \quad \beta \in (0,1),\ \lambda>0, 
\end{equation*}
fall under the assumptions (H0). Tempered L\'evy kernels are actively studied, see for example \cite{Meetemp}, \cite{WyTemp}.

 \subsubsection{Probabilistic interpretation and basic results}\label{probinterp}
For $\nu$ satisfying (H0), we define the following processes:
\begin{definition}\label{3processes}
\begin{enumerate}
	\item [(i)] We denote by $X^{t,(\nu)}_{+}:=\{X^{t,(\nu)}_{+} (s)\}_{s \ge 0}$ the Feller process started at $t\in [a,b]$ induced by the semigroup $\{T^{(\nu)+}_s\}_{s\ge 0}$ generated by the operator (\ref{Adecreasing}) (below)  on the space $C_{\infty}(\mathbb (-\infty,b])$ with core $C^1_{\infty}((-\infty,b])\cap \{f:-D_{+}^{(\nu)}f\in C_\infty((-\infty,b])\}$ (see \cite[Theorem 5.1.1]{KV0}).
		\item [(ii)] 
We denote by $X^{t,(\nu)}_{a+*}:=\{X^{t,(\nu)}_{a+*} (s)\}_{s \ge 0}$ the Feller process started at $t\in [a,b]$ induced by the semigroup $\{T^{(\nu)a+*}_s\}_{s\ge 0}$ generated by the operator (\ref{genC}) on the space $C([a,b])$ with core $C^1([a,b])$ (see \cite[Theorem 4.1]{KVFDE}).
		
			\item [(iii)] We denote by $X^{t,(\nu)}_{a+}:=\{X^{t,(\nu)}_{a+} (s)\}_{s \ge 0}$ the sub-Feller process started at $t\in (a,b]$ induced by the semigroup $\{T^{(\nu)a+}_s\}_{s\ge 0}$ generated by the operator (\ref{genRL}) on the space $C_{a} ([a,b])$ with core $C^1([a,b])\cap C_{a} ([a,b])$ (this follows from a simple modification of \cite[Theorem 4.1]{KVFDE}).
\end{enumerate}
\end{definition}
 The operator  (\ref{genC}) was   introduced in \cite{KVFDE} as  a probabilistic extension  of the classical fractional derivatives when applied to sufficiently regular functions. It can be seen  as the  generator of an interrupted  Feller processes. The  generator of the  \textit{decreasing} Feller process $X^{t,(\nu)}_{+} $ is given by
 \begin{equation}\label{Adecreasing}
  -D_+^{(\nu)}f(t) = \int_0^{\infty} \left (f(t-r) - f(r)  \right ) \nu(t,r)dr, 
  \end{equation} 
  and the process $X^{t,(\nu)}_{a+*}$   with generator $(\ref{genC})$ is obtained by absorbing at the point $a$ the process $X^{t,(\nu)}_{+}$      on its first attempt to leave the interval $(a,b]$. The process $X^{t,(\nu)}_{a+}$   with generator $(\ref{genRL})$ is obtained by killing the process $X^{t,(\nu)}_{+}$      on its first attempt to leave the interval $(a,b]$.
\begin{remark}
Since we are interested in the solutions to  differential equations on finite time intervals,  we only  consider the operators $-D_{a+*}^{(\nu)}$ and $-D_{a+}^{(\nu)}$ acting on functions defined  on the  interval $[a,b]$ instead of $[a, \infty)$, as was done originally in \cite[Theorem 4.1]{KVFDE}. 
\end{remark}

\begin{remark} 
 If  $a=-\infty$, then the operator $-D_{-\infty+*}^{(\nu)}$   coincides with the operator $-D_+^{(\nu)}$ on  functions vanishing at  infinity. This operator can be seen as the left-sided generalisation of the \textit{Marchaud derivative} \cite[Formulas 5.57-5.58]{samko}. This operator is also known as the \textit{generator form of fractional derivatives}  \cite{KV0}, \cite{Meerschaert2012}.
\end{remark}
Notation $p_{s}^{(\nu)+} (r,E)$ and $p_{s}^{(\nu)a+*} (r,E)$ denote the transition probabilities (with $s$ being the time variable) for the processes $X^{t,(\nu)}_{+}$ and $X^{t,(\nu)}_{a+*}$, respectively.\\
 We collect some results in the following

\begin{proposition}\label{3processesresults}
 \begin{enumerate} 
\item [(i)] The processes $X^{t,(\nu)}_{+},$ $X^{t,(\nu)}_{a+}$ and $X^{t,(\nu)}_{a+*}$ are non-increasing and  the sets $\{X^{t,(\nu)}_{+}(s)\in(c,d)\}$, $\{X^{t,(\nu)}_{a+}(s)\in(c,d)\}$, $\{X^{t,(\nu)}_{a+*}(s)\in(c,d)\}$ have the same probability, for every $t\in(a,b]$, $a<c<d\le b$, $s\in\mathbb R^+$. In particular $p_s^{(\nu)a+*}(t,\{a\}) = p_s^{(\nu)+} (t, (-\infty,a]),\ t \in ( a,b]$.
\item [(ii)] The   law of $\tau^{(\nu)}_a(t):=\inf\{s\ge 0:X^{t,(\nu)}_{+}(s)\le a\}$ equals the law of the first exit time from the interval $(a,b]$ of the processes $X^{t,(\nu)}_{a+*}$  for each $t\in(a,b]$ (so that we will use indistinctly the same notation  $\tau^{(\nu)}_a (t)$).  
\item [(iii)] The first exit time $\tau^{(\nu)}_a(t)$ has finite expectation  and $\mathbf E[\tau_a^{(\nu)}(t)]\to0$ as $t\to a$ under  either the  assumption (H1a) or (H1b):
\begin{quote}
\textbf{(H1a)}:  There exist  $\epsilon> 0$ and  $\delta > 0$, such that  the function $\nu$ satisfies $ \nu (t,r) \ge \delta > 0$ for all $t$ and $|r| < \epsilon$, or 

\textbf{(H1b)}:  The function $\nu$ satisfies  $\nu(t,r) \ge C r^{-1-\beta}$ for some constant $C > 0$ and $\beta \in (0,1)$.
\end{quote}

    \end{enumerate}
		\end{proposition}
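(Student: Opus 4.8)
The plan is to treat (i) and (ii) together by coupling all three processes to the free decreasing process $X^{t,(\nu)}_+$, and to reduce (iii) to a comparison with an inverted $\beta$-stable subordinator. First I would record that $X^{t,(\nu)}_+$ is a.s.\ non-increasing: its generator (\ref{Adecreasing}) has L\'evy kernel $\nu(t,r)\,dr$ supported on $r>0$, i.e.\ on jumps $t\mapsto t-r<t$, so the process constructed in \cite{KVFDE,KV0} performs only downward jumps. The processes $X^{t,(\nu)}_{a+*}$ and $X^{t,(\nu)}_{a+}$, being $X^{t,(\nu)}_+$ absorbed (resp.\ killed) at $\sigma:=\inf\{s\ge0:X^{t,(\nu)}_+(s)\le a\}$, inherit this monotonicity; running all three off the same path gives $X^{t,(\nu)}_{a+*}(s)=X^{t,(\nu)}_{a+}(s)=X^{t,(\nu)}_+(s)$ for $s<\sigma$. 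For $a<c<d\le b$ the event $\{X(s)\in(c,d)\}$ forces $X(s)>a$, hence $s<\sigma$ by monotonicity, and on $\{s<\sigma\}$ the three processes agree; so the three events coincide and carry equal probability, which is the first assertion of (i).

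For the remaining identity in (i) and for (ii) I would use monotonicity once more: $\{\sigma\le s\}=\{X^{t,(\nu)}_+(s)\le a\}$, whence $\{X^{t,(\nu)}_{a+*}(s)=a\}=\{\sigma\le s\}=\{X^{t,(\nu)}_+(s)\le a\}$ and therefore $p^{(\nu)a+*}_s(t,\{a\})=p^{(\nu)+}_s(t,(-\infty,a])$. Likewise $\inf\{s:X^{t,(\nu)}_{a+*}(s)\le a\}=\sigma=\tau^{(\nu)}_a(t)$ holds pathwise in the coupling, so the first exit time of $X^{t,(\nu)}_{a+*}$ from $(a,b]$ has the law of $\tau^{(\nu)}_a(t)$, which is (ii).

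For (iii) I would first establish finiteness of $\mathbf E[\tau^{(\nu)}_a(t)]$. Under (H1a) the kernel guarantees, at every point of $(a,b]$, a rate at least $\int_{\epsilon/2}^{\epsilon}\delta\,dr=\delta\epsilon/2$ of downward jumps of size at least $\epsilon/2$; since at most $N:=\lceil 2(b-a)/\epsilon\rceil$ such jumps suffice to leave $(a,b]$, the exit time is stochastically dominated by a sum of $N$ independent $\mathrm{Exp}(\delta\epsilon/2)$ variables, giving $\mathbf E[\tau^{(\nu)}_a(t)]\le 2N/(\delta\epsilon)<\infty$. Under (H1b) finiteness will instead drop out of the stable comparison below. (A direct Dynkin argument with the test function $w(t)=t-a\in C^1([a,b])$, for which $-D^{(\nu)}_{a+*}w(t)=-\int_0^{t-a}r\,\nu(t,r)\,dr-(t-a)\int_{t-a}^{\infty}\nu(t,r)\,dr$, is available but does not by itself bound the exit time uniformly near $a$, since this quantity vanishes as $t\to a$.)

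The delicate point, and the main obstacle, is the vanishing $\mathbf E[\tau^{(\nu)}_a(t)]\to 0$ as $t\to a$, which I would derive from (H1b). The bound $\nu(t,r)\ge Cr^{-1-\beta}$ says the $(\nu)$-kernel dominates that of a decreasing $\beta$-stable process, so $X^{t,(\nu)}_{a+*}$ descends at least as fast; this is exactly the domination $\mathbf P[X^{t,(\nu)}_{a+*}(s)\ge y]\le\mathbf P[X^{t,\beta}_+(s)\ge y]$ quoted before Theorem \ref{T:n-iter}. Letting $y\downarrow a$ gives $\mathbf P[\tau^{(\nu)}_a(t)>s]=\mathbf P[X^{t,(\nu)}_{a+*}(s)>a]\le\mathbf P[X^{t,\beta}_+(s)>a]=\mathbf P[\tau^{\beta}_a(t)>s]$, and integrating in $s$ yields $\mathbf E[\tau^{(\nu)}_a(t)]\le\mathbf E[\tau^{\beta}_a(t)]=(t-a)^{\beta}/\Gamma(1+\beta)$, finite and vanishing as $t\to a$. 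The real work is therefore to secure this domination for a $t$-dependent kernel $\nu$: I would build it by a pathwise coupling that pairs the common part $Cr^{-1-\beta}$ of the two kernels and realises the surplus jumps of $X^{t,(\nu)}$ as additional downward moves, so that $X^{t,(\nu)}_{a+*}(s)\le X^{t,\beta}_+(s)$ for every $s$, and then invoke the explicit inverse-stable moment formula.
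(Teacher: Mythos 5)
Your argument for (i)--(ii) presupposes a pathwise coupling that is exactly what the proposition must establish. In this paper $X^{t,(\nu)}_{+}$, $X^{t,(\nu)}_{a+}$ and $X^{t,(\nu)}_{a+*}$ are \emph{defined} (Definition \ref{3processes}) as the processes induced by three abstract semigroups, generated by the operators (\ref{Adecreasing}), (\ref{genRL}) and (\ref{genC}) on their respective cores; the description of the latter two as ``$X^{t,(\nu)}_{+}$ killed, resp.\ absorbed, on its first attempt to leave $(a,b]$'' is an informal gloss in Section \ref{probinterp}, and Proposition \ref{3processesresults}-(i) is precisely its rigorous, distributional justification. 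You therefore cannot ``run all three off the same path'': no common probability space carrying the three processes with that pathwise relation has been constructed, and constructing one is at least as hard as the proposition itself, so the argument is circular. The paper's proof instead works at the semigroup level: it truncates $\nu$ at jump size $h>0$ so that the three generators become bounded, observes that the truncated operators coincide on (and preserve) the class of functions vanishing on $(-\infty,a]$, so the exponential series defining the three truncated semigroups agree on such functions; it then lets $h\to0$, using the convergence of the truncated semigroups to the true ones from the proofs of \cite[Theorem 5.1.1]{KV0} and \cite[Theorem 4.1]{KVFDE}, and finally approximates indicators of intervals $(c,d)\subset(a,b]$ and of half-lines $(y,\infty)$ by such smooth functions; even the non-increasing property is obtained this way, via $\mathbf P[X^{t,(\nu)}_{+}(s)>y]=0$ for $t\le y$, rather than read off from the sign of the jumps. (Your deduction of (ii) from (i) via monotonicity is fine and matches the paper, but it rests on (i).)

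Part (iii) is also incomplete as you give it. The proposition asserts both finiteness of $\mathbf E[\tau^{(\nu)}_a(t)]$ and the vanishing $\mathbf E[\tau^{(\nu)}_a(t)]\to0$ as $t\to a$ under \emph{either} (H1a) \emph{or} (H1b); you obtain finiteness under (H1a) (by a jump-rate domination which itself needs a concrete pathwise construction of the process to be rigorous) but the vanishing only under (H1b), so the (H1a) case of the limit is left unproved. The paper handles all of (iii) simply by citing \cite[Theorem 4.1]{KVFDE}. Moreover, the stochastic domination $\mathbf P[X^{t,(\nu)}_{+}(s)>y]\le\mathbf P[X^{t,\beta}_{+}(s)>y]$ you invoke under (H1b) is not re-derived in the paper by an ad hoc coupling of the state-dependent kernel $\nu(t,r)$ with the stable one; it is imported from \cite[Theorem 1.5]{Zhang00} (see the proof of Theorem \ref{T:n-iter}), and your proposed coupling remains a sketch whose rigorous execution is nontrivial precisely because the kernel depends on the current position. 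Granting that domination, your concluding computation $\mathbf E[\tau^{(\nu)}_a(t)]\le\mathbf E[\tau^{\beta}_a(t)]=(t-a)^{\beta}/\Gamma(1+\beta)$ is correct and consistent with Remark \ref{psbeta}.
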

		\begin{proof}
		
			Part (i) is proved in Appendix \ref{proof3processesresults}. Part (ii) is implied by (i). 
			For part (iii) see \cite[Theorem 4.1]{KVFDE}.

		\end{proof}
\begin{remark}
Note that (H1b) implies (H1a).
\end{remark}

For our notion of generalised solution we will assume
\begin{quote}
\textbf{(H2)}: the measures $p_s^{(\nu)+}(t,\cdot)$ and $p^A_s(x,\cdot)$ are absolutely continuous with respect to Lebesgue measure for each $t\in [a,b],\ x\in \mathbb R^d,\ t\in\mathbb R^+$,
\end{quote}
where  $A$ is the generator of a Feller process $\{X^{x,A}(s)\}_{s\ge 0}$ on $\mathbb R^d$, $x\in\mathbb R^d$, and we denote by $p^A_s(x,\cdot)$ the law of $X^{x,A}(s)$, $s\ge 0$, $x\in\mathbb R^d$.


\section{Generalised RL integral operator $I_{a+}^{(\nu)}$}\label{section3}
We use the potential operator corresponding to the generator $-D^{(\nu)}_{a+}$ as in Definition  \ref{3processes}-(iii) to define an integral operator on  $B ([a,b])$, which can be thought of as a generalisation of the RL integral operator $I_{a+}^{\beta}$ of order $\beta \in (0,1)$ (see, e.g., \cite[Definition 2.1]{kai}).   
\begin{definition}\label{D-Inu}
Let $\nu$ be a function satisfying assumption (H0) and (H1a). The operator  $I_{a+}^{(\nu)}: B([a,b]) \to B([a,b])$ defined by
\begin{equation*}
\left (I_{a+}^{(\nu)} f  \right ) (t) := \int_{(a,t]}f(y)\left( \int_0^\infty p_s^{(\nu)+}(t,dy)d s\right),\quad t>a,
\end{equation*}
and 0 for $t=a$, will be called \textit{the generalised RL fractional integral}  associated with $\nu$.
\end{definition}
The generalised fractional integral $I_{a+}^{(\nu)}$ satisfies the following:
\begin{enumerate}
\item [(i)] for the process $X^{t,(\nu)}_{+}$  we have
\[
I^{(\nu)}_{a+}f(t)=\mathbf E\left[ \int_0^{\tau^{(\nu)}_a(t)}f(X^{t,(\nu)}_{+}(s))ds\right],
\]
  which follows from Proposition \ref{3processesresults}-(i)-(ii).

\item [(ii)] For each $f \in B[a,b]$, 
\begin{align*}
\Big | \left( I_{a+}^{(\nu)} f \right) (t) \Big | \le \|f\| \sup_{t \in [a,b]} \mathbf{E} \left [ \tau^{(\nu)}_a(t) \right ].
\end{align*}
In particular, if $f = \textbf{1}$ (the constant function 1), then 
\begin{equation*}\label{Inu-1}
\left (I_{a+}^{(\nu)} \textbf{1}  \right )(t) = \int_{(a,t]} \int_0^{\infty} p_s^{(\nu)+}(t,dy)ds  = \mathbf{E} \left [ \tau_{a}^{(\nu)} (t) \right ].
\end{equation*}

\end{enumerate}
\begin{remark}
The operator $I_{a+}^{(\nu)}$ can be thought of as the left  inverse operator of the RL type operator $-D_{a+}^{(\nu)}$. Note that the RL type operator $-D_{a+}^{(\nu)}$ coincides with the Caputo type operator $-D_{a+*}^{(\nu)}$ on functions vanishing at $a$.
\end{remark}
\begin{remark}\label{psbeta}
If $\nu (x,y)$ is given by (\ref{2.7a}), then $I_{a+}^{(\nu)}$ coincides with the Riemann-Liouville integral operator $I_{a+}^{\beta} $ of order $\beta\in(0,1)$ (see, e.g., \cite[Chapter 2]{kai}). Let   $\tau_{a}^{\beta} (t)$ be the first exit time from the interval $(a,b]$ of the inverted $\beta-$stable subordinator started at $t\in(a,b]$. If $p_s^{\beta-}(t,y)$ denotes the transition densities of the $\beta-$stable subordinator, then
\begin{equation*} \label{E:ps-beta}
p_s^{\beta-} (t,y) = s^{-1/\beta} \omega_{\beta} (s^{-1/\beta} (y-t);1,1), 
\end{equation*}
where $\omega_{\beta}(\cdot;\sigma,\gamma)$ stands for the $\beta$-stable density with scaling parameter $\sigma$, skewness parameter $\gamma$ and zero location parameter (see, e.g., \cite[Equation (7.2), page 311]{KV0}). Let $p_s^{\beta+}(t,y)$ denote the transition density of the respective inverted $\beta-$stable subordinator. Then
  \begin{align}\nonumber
\int_0^{\infty} p_s^{\beta+}(t,y)ds &= \int_0^{\infty} s^{-1/\beta} \omega_{\beta} (s^{-1/\beta}(t-y); 1,1)ds\\ 
&= (t-y)^{\beta-1}  \int_0^{\infty} u^{-1/\beta} \omega_{\beta} (u^{-1/\beta};1,1)du = \frac{1}{\Gamma(\beta)} (t-y)^{\beta-1}, \label{beta-density}
\end{align}
 using  the Mellin transform of the $\beta-$stable densities $\omega_{\beta}(z;1,1)$ for the last equality (see, e.g., \cite[Theorem 2.6.3, p. 117]{zolotarev}). 
The previous yields the known results 
\begin{align*}
\Big | \left( I_{a+}^{\beta} f \right) (t) \Big | \le \frac{1}{\Gamma(\beta + 1)} \|f\| (b-a)^{\beta },
\end{align*}
and
\begin{equation*}\label{Ibeta-1}
\left (I_{a+}^{\beta} \textbf{1}  \right )(t) = \int_a^t \int_0^{\infty} p_s^{\beta+}(t,y)dsdy  = \mathbf{E} \left [ \tau_{a}^{\beta} (t) \right ]  =  \frac{(t-a)^{\beta}}{\Gamma (\beta + 1)}.
\end{equation*}
  \end{remark}

	Let $I_{a+}^{(\nu),n}$ denote the $n$-fold iteration of the operator $I_{a+}^{(\nu)}$, $n \in \mathbb{N}$.  For convention $I_{a+}^{(\nu),0} $ stands for  the identity operator.\\
	The following result shall be important for  the following sections. It provides an explicit bound for $|I_{a+}^{(\nu)}f(t)|$ under assumption (H1b).

\begin{theorem}\label{T:n-iter}
Let $\nu$ be a function satisfying assumptions (H0), (H1b). Then, 
for each $f \in B([a,b])$, 
\begin{align}
\Big |  \left( I_{a+}^{(\nu), n} f \right) (t)\Big | &\le   \|f\|_t\frac{(b-a)^{n\beta}}{ \left( \Gamma (\beta + 1)\right)^n}   \prod_{k=0}^{n-1} B (k \beta+ 1,\, \beta), \quad  n \ge 1, \label{bound}
\end{align}
where $\|f\|_t := \sup_{y \le t} |f (y)|$.
Moreover, the series
\begin{equation}\label{0series}
\sum_{n=0}^{\infty}  \left( I_{a+}^{(\nu),n} f \right ) (t)
\end{equation}
converges uniformly on $[a,b]$. \\
\end{theorem}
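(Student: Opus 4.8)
The plan is to prove (\ref{bound}) by induction on $n$, the crux being a pointwise comparison of the potential measure $U^{(\nu)}_t(dy):=\int_0^\infty p_s^{(\nu)+}(t,dy)\,ds$ against the corresponding object for an inverted $\beta$-stable subordinator. Writing $\left(I_{a+}^{(\nu)}f\right)(t)=\int_{(a,t]}f(y)\,U^{(\nu)}_t(dy)$ and bounding $|f(y)|\le\|f\|_t$ on $(a,t]$ (legitimate since $X^{t,(\nu)}_{+}$ is non-increasing, so only the values of $f$ on $(a,t]$ enter), everything reduces to estimating weighted masses of $U^{(\nu)}_t$. The starting point is assumption (H1b): since $\nu(t,r)\ge C r^{-1-\beta}$, the process $X^{t,(\nu)}_{a+*}$ jumps downward at least as fast as the inverted $\beta$-stable subordinator $X^{t,\beta}_{+}$ with L\'evy kernel $C r^{-1-\beta}$, yielding the stochastic domination $\mathbf P[X^{t,(\nu)}_{a+*}(s)\ge y]\le\mathbf P[X^{t,\beta}_{+}(s)\ge y]$. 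Integrating this in $s$ and invoking Proposition \ref{3processesresults}(i) (to pass between the absorbed process and the free process $X^{t,(\nu)}_{+}$ on subintervals of $(a,b]$), I obtain the potential-tail bound $U^{(\nu)}_t([y,t])\le U^{\beta}_t([y,t])$, where by (\ref{beta-density}) the dominating potential measure has density $\tfrac{1}{\Gamma(\beta)}(t-y)^{\beta-1}$.

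Granting this domination, the inductive step is a single Beta-function computation. Assume $\big|\!\left(I^{(\nu),n-1}_{a+}f\right)(y)\big|\le \|f\|_y\,c_{n-1}(y-a)^{(n-1)\beta}$; inserting it into $I^{(\nu),n}_{a+}f(t)=I^{(\nu)}_{a+}\!\left(I^{(\nu),n-1}_{a+}f\right)(t)$ and using $\|f\|_y\le\|f\|_t$ for $y\le t$ pulls out $\|f\|_t$ and reduces the step to estimating $\int_{(a,t]}(y-a)^{(n-1)\beta}\,U^{(\nu)}_t(dy)$. Since $(y-a)^{(n-1)\beta}$ is non-negative and non-decreasing and the tails of $U^{(\nu)}_t$ are dominated, a layer-cake argument gives
\[
\int_{(a,t]}(y-a)^{(n-1)\beta}\,U^{(\nu)}_t(dy)\;\le\;\frac{1}{\Gamma(\beta)}\int_a^t (y-a)^{(n-1)\beta}(t-y)^{\beta-1}\,dy=\frac{(t-a)^{n\beta}}{\Gamma(\beta)}\,B\big((n-1)\beta+1,\beta\big),
\]
the last equality being the standard Beta integral after $y=a+(t-a)u$. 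Thus each iteration contributes exactly one factor $B((n-1)\beta+1,\beta)=B(k\beta+1,\beta)$ with $k=n-1$ together with the normalising constant from the potential density; collecting these over $k=0,\dots,n-1$ produces the product $\prod_{k=0}^{n-1}B(k\beta+1,\beta)$ and the power $(t-a)^{n\beta}$ in (\ref{bound}), after which $t\le b$ yields $(b-a)^{n\beta}$. The base case $n=1$ is $|I^{(\nu)}_{a+}f(t)|\le\|f\|_t\,\mathbf E[\tau^{(\nu)}_a(t)]$ combined with the same tail estimate.

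For the uniform convergence of (\ref{0series}) I would telescope the Beta product via (\ref{BetaG}): $\prod_{k=0}^{n-1}B(k\beta+1,\beta)=\big(\Gamma(\beta)\big)^n/\Gamma(n\beta+1)$, so the $n$-th term of (\ref{0series}) is controlled by a constant multiple of $(b-a)^{n\beta}/\Gamma(n\beta+1)$, a Mittag-Leffler-type tail. The factorial growth of $\Gamma(n\beta+1)$ then forces summability: inequality (\ref{dragomir}) with $a=\beta$ gives $\Gamma(n\beta+1)=n\beta\,\Gamma(n\beta)>n!\,\beta^{2n-1}\big(\Gamma(\beta)\big)^n$, so the $n$-th term is dominated by $\mathrm{const}\cdot z^n/n!$ for a fixed $z>0$ depending only on $b-a$ and $\beta$. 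Since this bound is independent of $t\in[a,b]$, the convergence is uniform on $[a,b]$.

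The main obstacle is the content of the first paragraph: rigorously converting the kernel-level inequality $\nu\ge C r^{-1-\beta}$ into the stochastic domination and then into the tail bound for $U^{(\nu)}_t$. This demands care with the interplay between the free, absorbed and killed processes of Definition \ref{3processes} (mediated by Proposition \ref{3processesresults}), with possible atoms of $U^{(\nu)}_t$ at the endpoints, and with the fact that the dominating stable kernel $C r^{-1-\beta}$ need not be the canonical one, which affects only the multiplicative constant (and hence whether one records $\Gamma(\beta)$ or the weaker $\Gamma(\beta+1)$ in (\ref{bound})). Once the potential-measure domination is established, the remaining steps are routine Beta-function bookkeeping and a single application of (\ref{dragomir}).
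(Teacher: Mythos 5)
Your proposal is essentially the paper's own proof: both arguments run through stochastic domination of $X^{t,(\nu)}_{+}$ by an inverted $\beta$-stable subordinator deduced from (H1b), the explicit potential density $\frac{1}{\Gamma(\beta)}(t-y)^{\beta-1}$ from (\ref{beta-density}), an induction on $n$ reducing to the standard Beta integral, and the identity (\ref{BetaG}) combined with inequality (\ref{dragomir}) to turn $\prod_{k=0}^{n-1}B(k\beta+1,\beta)$ into a bound of order $z^n/n!$, giving uniform convergence by the Weierstrass $M$-test. The step you single out as the main obstacle---converting the kernel inequality $\nu(t,r)\ge Cr^{-1-\beta}$ into the stochastic/tail domination---is precisely where the paper invokes the comparability theorem for jump processes of \cite[Theorem 1.5]{Zhang00}, and then passes by approximation to expectations of non-decreasing functions, which is the same content as your layer-cake argument at the level of potential measures.
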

\begin{proof}
By definition of the generalised fractional integral
\begin{align*}
\Big | \left( I_{a+*}^{(\nu)} f\right)(t)\Big | &\le \int_0^{\infty} \left(\int_{(a,t]} | f(y) |\,p_s^{(\nu)+} (t,dy)\right)ds \\
&\le \int_0^{\infty} \left(\int_{(a,t]} \sup_{z \le y}|f(z)| p_s^{(\nu)+} (t,dy)\right)ds.
\end{align*}

Fix  $\beta\in(0,1)$ as in (H1b) and denote by $\{X^{t,\beta}_+(s)\}_{s\ge0}$ the associated inverted $\beta$-stable subordinator. By assumption (H1b) it follows from \cite[Theorem 1.5]{Zhang00} that $\mathbf{P}[ X^{t,(\nu)}_+ (s)  > y]\le \mathbf{P}[ X^{t,\beta}_{t} (s)  > y],  \  t,y\in(a,b],\ s\in\mathbb R^+ $. Therefore 
\begin{equation*} \label{E:dom}
\mathbf{E} \left [ g \left (X^{t,(\nu)}_{a+} (s)\right )  \right ] =\mathbf{E} \left [ g \left (X^{t,(\nu)}_+ (s)\right )  \right ] \le   \mathbf{E} \left [ g \left (X^{t,\beta}_+ (s)\right )  \right ]
\end{equation*} for any non-decreasing function $g\in C^1_\infty((-\infty,b])$ such that $g(t)=0, \ \forall x\le a$, where the equality holds as a consequence of the proof of Proposition \ref{3processesresults}-(i). By a standard approximation argument we obtain 
\[
\mathbf{P}[ X^{t,(\nu)}_{a+} (s)  > y]\le \mathbf{P}[ X^{t,\beta}_+ (s)  > y],\quad t,y\in(a,b],\ s\in\mathbb R^+.
\]
 Another approximation argument yields
\begin{equation}
\mathbf{E} \left [ g \left (X^{t,(\nu)}_{a+} (s)\right )  \right ]  \le   \mathbf{E} \left [ g \left (X^{t,\beta+} (s)\right )  \right ], 
\label{equaz}
\end{equation}
for any non-decreasing bounded function $g:[a,b]\to\mathbb R$. In particular (\ref{equaz}) holds for the function $g(y) =\sup_{z \le y}|f(z)|$. Hence
\begin{align}\nonumber
\Big | \left( I_{a+*}^{(\nu)} f\right)(t)\Big | &\le \int_0^{\infty}\left( \int_{(a,t]}  | f(y)| p_s^{(\nu)+} (t,dy)\right)ds \\ \nonumber
&\le \int_0^{\infty} \int_a^t \sup_{z \le y}|f(z)| p_s^{\beta+} (t,y)dy ds\\ 
&\le  \|f\|_t\int_0^{\infty} \int_a^t  p_s^{\beta+} (t,y)dyds \le  \frac{1}{\Gamma(\beta + 1)}\|f\|_t(t-a)^{\beta}, \label{firstbound}
\end{align}
To prove the inequality (\ref{bound}) we proceed by induction.   Case $n=1$ is given by  (\ref{firstbound}). 
Assume that the inequality in (\ref{bound}) holds for $n-1$. Then, using standard identities for the Beta function, the inequality in (\ref{firstbound})  and the induction hypothesis 
\begin{align}\nonumber
\Big |  \left( I_{a+}^{(\nu), n} f \right) (t)\Big |   &=    \Big |  I_{a+}^{(\nu)}     I_{a+}^{(\nu),n-1}  f (t)\Big |  \le \int_0^{\infty} \int_a^t \sup_{z \le y}   \Big |  I_{a+}^{(\nu),n-1}  f (z) \Big | p_s^{\beta+} (t,y)dy ds  \\ \nonumber
  & \le \int_0^{\infty} \int_a^t   \|f\|_y  \frac{(y-a)^{(n-1)\beta}}{(\Gamma (\beta + 1))^{n-1}}   \prod_{k=0}^{n-2} B (k \beta+ 1,\, \beta) p_s^{\beta+} (t,y)dy ds  \\ \nonumber
  & \le \|f\|_t  \frac{1}{\left( \Gamma (\beta + 1)\right)^{n-1}}   \prod_{k=0}^{n-2} B (k \beta+ 1,\, \beta) \int_0^{\infty} \int_a^t  (y-a)^{(n-1)\beta}  p_s^{\beta+} (t,y)dy ds  \\ \nonumber
   & \le \|f\|_t   \frac{1}{\left( \Gamma (\beta + 1)\right)^{n-1}}   \prod_{k=0}^{n-2} B (k \beta+ 1,\, \beta)  \int_a^t  (y-a)^{(n-1)\beta}  (t-y)^{\beta-1} \frac{1}{\Gamma (\beta+1)} dy \\ \nonumber
	& =    \|f\|_t\frac{(b-a)^{n\beta}}{ \left( \Gamma (\beta + 1)\right)^n}   \prod_{k=0}^{n-1} B (k \beta+ 1,\, \beta),\label{E:int-beta} 
 \end{align}
where  the  last inequality uses Fubini's theorem  and the equality in  (\ref{beta-density}). \\
To prove the convergence of (\ref{0series}) we use  the identity (\ref{BetaG}) and the inequality (\ref{dragomir}) to obtain that for each $n \in \mathbb{N}$ 
 \[ 
\prod_{k=0}^{n-1} B(k\beta + 1, \beta) = \frac{\left ( \Gamma (\beta)\,\,\right )^n}{n \beta  \Gamma (n\beta) }\le \frac{\left ( \,\,\Gamma (\beta)\,\,\right )^n}{n\beta (n-1)! \beta^{2(n-1)} \left ( \,\Gamma (\beta)\,\right )^n } \le \frac{1}{n! \beta^{2n}}. 
\]
Hence, 
\begin{equation*}
 \Big |\,  \left( I_{a+}^{(\nu),n} f \right ) (t) \,\Big |  \le \|f\|\left (    \frac{(b-a)^{\beta}}{\beta^2 \Gamma (\beta + 1)}   \right )^n \frac{1}{n! } =: M_n.
\end{equation*}

Since $\sum_{n=0}^{\infty} M_n$ converges, 
Weierstrass $M-$test implies the uniform convergence of (\ref{0series}) on $[a,b]$, as required.

\end{proof}
\begin{remark}
In the classical fractional setting, the $n-$fold RL integral  $I_{a+}^{\beta,n}$ has an explicit expression obtained from its semigroup property \cite[Theorem 2.2]{kai} 
\begin{equation*}\label{n-fold-RL}
 \left( I_{a+}^{\beta, n} f \right) (t)  = \left( I_{a+}^{n\beta} f \right) (t). 
 \end{equation*}
 Hence, for $f(t) = \mathbf{1}$,
 \begin{align*}
 \left( I_{a+}^{\beta, n} f \right) (t)  = \frac{1}{\Gamma ( n \beta)} \int_a^t (t-y)^{ n\beta  -1} dy = \frac{ (t-a)^{n\beta}}{\Gamma (n\beta + 1)}.
 \end{align*}
\end{remark}


\section{Generalised fractional evolution equation: Linear case}\label{section4}
Using the theory of strongly continuous semigroups and the properties of the process $X^{(\nu)}_{a+*}$ (in particular Proposition \ref{3processesresults}-(iii)), we first prove the wellposedness and  stochastic  representation for two notions of solution to the problem 
\begin{align}\nonumber
(-_{t}D^{(\nu)}_{a+}+ A)u(t,x)&=-g(t,x),&& (t,x)\in(a,b]\times \mathbb{R}^d,\\
 u(a,x)&=0,&& x\in\mathbb R^d,\label{ProblemRL}
\end{align}
for $g\in B([a,b]\times \mathbb{R}^d)$, $A$ being the generator of a Feller semigroup on $C_\infty(\mathbb R^d)$, and $\nu$ satisfying  assumptions (H0) and (H1a) (see Theorem \ref{SRRL}).\\
The series representation is obtained under the additional assumptions (H1b) and  $A$ bounded (see Theorem \ref{MThmSeries}). We then show convergence of such series representation to the stochastic representation for $A$ generator of a Feller semigroup (see Theorem \ref{RLconvofseries}).

We use the following technical results whose proof is provided in  Appendix \ref{proofMThmSR}. 
\begin{theorem}\label{MThmSR}
Let $G,$ $\tilde G$ be generators of strongly continuous, uniformly bounded semigroups $T=\{T_s\}_{s\ge0},$ $\tilde T=\{\tilde T_s\}_{s\ge0}$ on $C_\infty(X),$ $ C_\infty(\tilde X)$ with domains $D=Dom(G),$ $ \tilde D=Dom(\tilde G)$, respectively, where $X,$ $ \tilde X$ are the closure of non-empty open subsets of $\mathbb{R}^n$ and  $C^\infty_\infty(X)\subset D$, $C^\infty_\infty(\tilde X)\subset \tilde D$, respectively.\\
Define 
\begin{equation*}
\mathcal{L}:=\text{Linear span of }D\tilde D,
\end{equation*}
where $D\tilde D:=\{g=f\tilde f: \ f\in D,\ \tilde f\in \tilde D\}$.\\ 
Then 
\begin{enumerate}
	\item [(i)] the closure of $G+\tilde G$ in $C_\infty(X\times \tilde X)$  on the set $\mathcal{L}$ generates a uniformly bounded strongly continuous semigroup $\{\Phi_s\}_{s\ge0}$ on $C_\infty(X\times \tilde X)$,  with invariant core $\mathcal{L}$, where $\Phi_s:=T_s\tilde T_s=\tilde T_s T_s\ s\in\mathbb R^+$ (where $G$ and $T_s$  act on the $X$-variable,  $\tilde G$ and $\tilde T_s$ act on the $\tilde X$-variable, $s\in \mathbb R^+$).\\
	We denote by $L$ the generator of  $\{\Phi_s\}_{s\ge0}$.
	
\item [(ii)] If $G,$ $ \tilde G$ are generators of Feller  semigroups, then $\{\Phi_s\}_{s\ge0}$ is  a Feller semigroup.
If $G,$ $ \tilde G$ are generators of sub-Feller  semigroups, then $\{\Phi_s\}_{s\ge0}$ is  a sub-Feller semigroup.

\item [(iii)] The same statement in (i)  holds for $X=[a,b]$, $G$ acting on $C_a([a,b])$ and $\{\Phi_s\}_{s\ge0}$  acting on $C_{a,\infty}([a,b]\times \tilde X)$ instead of $C_{\infty}([a,b]\times \tilde X)$.
\end{enumerate}
\end{theorem}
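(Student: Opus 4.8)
The plan is to construct the semigroup $\{\Phi_s\}_{s\ge0}$ directly as the product $\Phi_s := T_s\tilde T_s$, where $T_s$ acts on the $X$-variable and $\tilde T_s$ on the $\tilde X$-variable, and then verify it has all the claimed properties. First I would make sense of $T_s\tilde T_s$ as an operator on $C_\infty(X\times\tilde X)$. On a product function $g=f\tilde f$ with $f\in C_\infty(X),\ \tilde f\in C_\infty(\tilde X)$ the action is unambiguous, namely $(T_s\tilde T_s)(f\tilde f)=(T_sf)(\tilde T_s\tilde f)$; the standard way to extend this is to note that finite linear combinations of such products are dense in $C_\infty(X\times\tilde X)$ (a Stone--Weierstrass argument, since products separate points and vanish at infinity), and that $T_s$ and $\tilde T_s$, acting on disjoint variable groups, commute and are each uniformly bounded, so $T_s\tilde T_s$ is a well-defined uniformly bounded operator on the dense subspace that extends continuously to all of $C_\infty(X\times\tilde X)$. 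The commutativity $T_s\tilde T_s=\tilde T_sT_s$ is immediate on product functions and extends by density.

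Next I would verify the semigroup law and strong continuity. The identity $\Phi_{s+t}=\Phi_s\Phi_t$ follows on product functions from the individual semigroup laws $T_{s+t}=T_sT_t$, $\tilde T_{s+t}=\tilde T_s\tilde T_t$ together with commutativity, and then extends to all of $C_\infty(X\times\tilde X)$ by density and uniform boundedness. For strong continuity, I would use the telescoping estimate
\begin{equation*}
\|\Phi_s g - g\| \le \|T_s\tilde T_s g - \tilde T_s g\| + \|\tilde T_s g - g\|
\le C\|T_s(\tilde T_s g) - \tilde T_s g\| + \|\tilde T_s g - g\|,
\end{equation*}
where $C$ bounds the norms of the $\tilde T_s$. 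The second term tends to $0$ by strong continuity of $\tilde T$; for the first term one controls $\|T_s h_s - h_s\|$ uniformly over the precompact family $h_s=\tilde T_s g$, using joint continuity, so that $\|\Phi_s g-g\|\to0$ as $s\downarrow0$.

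Then I would identify the generator on $\mathcal{L}$. For $g=f\tilde f$ with $f\in D,\ \tilde f\in\tilde D$, differentiating $\Phi_s(f\tilde f)=(T_sf)(\tilde T_s\tilde f)$ at $s=0$ by the product rule gives
\begin{equation*}
\lim_{s\downarrow0}\frac{\Phi_s(f\tilde f)-f\tilde f}{s}=(Gf)\tilde f+f(\tilde G\tilde f)=(G+\tilde G)(f\tilde f),
\end{equation*}
so $(G+\tilde G)$ agrees with the generator $L$ of $\{\Phi_s\}$ on $\mathcal{L}$. Since $C^\infty_\infty(X)\subset D$ and $C^\infty_\infty(\tilde X)\subset\tilde D$, the span $\mathcal{L}$ contains enough product functions to be dense in $C_\infty(X\times\tilde X)$, and $\Phi_s$ maps product functions to product functions, so $\mathcal{L}$ is invariant. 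That $\mathcal{L}$ is a core then follows from the standard criterion that a dense, invariant subspace contained in the domain of the generator of a strongly continuous semigroup is automatically a core; hence $L$ is the closure of $(G+\tilde G)|_{\mathcal{L}}$, proving part (i). For part (ii), positivity and the contraction/sub-contraction property of $\Phi_s=T_s\tilde T_s$ follow from the corresponding properties of $T_s$ and $\tilde T_s$: the tensor product of two positive contractions is a positive contraction, and this is preserved under the continuous extension, giving the (sub-)Feller conclusion. Part (iii) is the same argument carried out on the subspace $C_a([a,b])$ in place of $C_\infty(X)$, noting that the boundary condition $f(a)=0$ is preserved by $T_s$ (so the product semigroup leaves $C_{a,\infty}([a,b]\times\tilde X)$ invariant) and that the Stone--Weierstrass density argument still applies on this closed invariant subspace.

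I expect the main obstacle to be the careful justification that $T_s\tilde T_s$ extends from product functions to a bounded operator on all of $C_\infty(X\times\tilde X)$ and that the generator is exactly the \emph{closure} of $G+\tilde G$ on $\mathcal{L}$ rather than a possibly larger operator; establishing that $\mathcal{L}$ is a genuine core (density plus invariance under $\{\Phi_s\}$) is the crux, and this is where the hypotheses $C^\infty_\infty(X)\subset D$, $C^\infty_\infty(\tilde X)\subset\tilde D$ are needed to guarantee $\mathcal{L}$ is rich enough.
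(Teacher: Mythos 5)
Your plan is essentially the paper's proof: the same construction $\Phi_s=T_s\tilde T_s$, density of $\mathcal{L}$ via Stone--Weierstrass applied to the span of $C^\infty_\infty(X)C^\infty_\infty(\tilde X)$, identification of the generator on product functions by the product rule (with sup-norm estimates over $X\times\tilde X$), invariance of $\mathcal{L}$ under $\{\Phi_s\}_{s\ge0}$, the standard ``dense invariant subspace of the domain is a core'' criterion, and the same treatment of (ii) and (iii).

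However, two steps as you wrote them have genuine gaps. First, the construction of $\Phi_s$ by extension from product functions is incomplete: for $g=\sum_n\lambda_n f_n\tilde f_n$ you must check both that $\sum_n\lambda_n (T_sf_n)(\tilde T_s\tilde f_n)$ depends only on $g$ and not on the chosen representation, and that it is bounded by $C\|g\|$; neither follows from termwise estimates, because of cross-terms. The paper sidesteps this by letting $T_s$ and $\tilde T_s$ act variable-wise on all of $C_\infty(X\times\tilde X)$ (apply $T_s$ to $g(\cdot,\tilde x)$ for each fixed $\tilde x$), which is manifestly well defined and bounded, and by invoking the Riesz--Markov representation theorem to obtain $T_s\tilde T_s=\tilde T_sT_s$; once one has the kernel (measure) representation of $T_s$ and $\tilde T_s$, your formula on the span is justified as well. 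Second, your strong-continuity argument is circular: the term $\|\tilde T_sg-g\|$ for a general two-variable $g$, and the uniform control of $\|T_sh_s-h_s\|$ over the family $h_s=\tilde T_sg$, both presuppose strong continuity of the \emph{extended} semigroups on $C_\infty(X\times\tilde X)$, which is precisely what is being proved. The paper's (and the standard) fix is to prove $\|\Phi_sf-f\|\to0$ first for $f\in\mathcal{L}$ by telescoping on each product term, where only the one-variable strong continuity and uniform boundedness of $T$ and $\tilde T$ enter, and then to extend to all of $C_\infty(X\times\tilde X)$ using density of $\mathcal{L}$ together with uniform boundedness of $\{\Phi_s\}_{s\ge0}$. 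With these two repairs your argument coincides with the paper's proof.
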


\begin{remark}
Theorem \ref{MThmSR}  allows us to solve the resolvent equation
\[
Lu=\lambda u+ g, \  \lambda\in\mathbb R^+\backslash\{0\},\ g\in C_\infty(X\times \tilde X),
\]
but what we are particularly interested in is the case $\lambda=0$, which requires more care as the potential operator is not well-defined in general.
\end{remark}
The next Proposition will be used in Section \ref{section:convergenceseriesRL}.
\begin{proposition}\label{domofA} Suppose that $\tilde G$ is bounded. Then, under the assumptions of Theorem \ref{MThmSR}, 
$f\in Dom(L)$ implies that $f(\cdot, \tilde x)\in Dom(G)$ for each $\tilde x\in \tilde X$. In particular $Lf=(G+\tilde G)f$.
\end{proposition}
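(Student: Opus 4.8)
The plan is to exploit the commuting factorisation $\Phi_s=T_s\tilde T_s$ from Theorem \ref{MThmSR}(i) together with the hypothesis that $\tilde G$ is bounded, which forces $\{\tilde T_s\}$ to be norm-continuous. First I would record the effect of boundedness on the product space. Since $\tilde G$ is bounded on $C_\infty(\tilde X)$, its natural action on the $\tilde X$-variable of a two-variable function $f\in C_\infty(X\times\tilde X)$ is again bounded, with operator norm at most $\|\tilde G\|$: this follows by applying the one-variable estimate $\|\tilde G\, f(x,\cdot)\|\le\|\tilde G\|\,\|f(x,\cdot)\|$ for each fixed $x$ and taking the supremum over $x$. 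The extended operator, which I still denote $\tilde G$, coincides with the generator of $\{\tilde T_s\}$ on $C_\infty(X\times\tilde X)$, and boundedness gives $\|s^{-1}(\tilde T_s-I)-\tilde G\|\to0$ in operator norm as $s\downarrow0$; in particular $s^{-1}(\tilde T_s f-f)\to\tilde G f$ in the sup-norm for every $f$. Here I must check that $\tilde G f\in C_\infty(X\times\tilde X)$ jointly (continuity and vanishing at infinity), which holds because $C_\infty(X\times\tilde X)$ is closed and $\tilde G f$ is the norm-limit of the elements $s^{-1}(\tilde T_s f-f)$ of that space.

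Next I would split the difference quotient of the product semigroup, using $T_s\tilde T_s f-f=(T_s f-f)+T_s(\tilde T_s f-f)$, as
\[
\frac{\Phi_s f - f}{s} = \frac{T_s f - f}{s} + T_s\Big(\frac{\tilde T_s f - f}{s}\Big).
\]
For $f\in Dom(L)$ the left-hand side converges in norm to $Lf$. For the last term I would show $T_s\big(s^{-1}(\tilde T_s f-f)\big)\to\tilde G f$ by writing
\[
T_s\Big(\frac{\tilde T_s f - f}{s}\Big) - \tilde G f = T_s\Big(\frac{\tilde T_s f - f}{s} - \tilde G f\Big) + \big(T_s \tilde G f - \tilde G f\big),
\]
bounding the first summand by $M\,\|s^{-1}(\tilde T_s f-f)-\tilde G f\|\to0$ (using the uniform bound $\|T_s\|\le M$ and the previous paragraph) and sending the second to $0$ by strong continuity of $\{T_s\}$ evaluated at the \emph{fixed} element $\tilde G f\in C_\infty(X\times\tilde X)$. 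Consequently $s^{-1}(T_s f-f)$ converges in the sup-norm over $X\times\tilde X$, necessarily to $Lf-\tilde G f$.

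Finally I would read off the pointwise conclusion. Because $T_s$ acts only on the first variable, $(T_s f)(\cdot,\tilde x)=T_s[f(\cdot,\tilde x)]$ as an identity in $C_\infty(X)$ for each fixed $\tilde x$, and convergence in $\|\cdot\|$ over $X\times\tilde X$ forces convergence in $\|\cdot\|$ over $X$ after fixing $\tilde x$ (since $\sup_x|g(x,\tilde x)|\le\sup_{x,\tilde x}|g(x,\tilde x)|$). Thus $s^{-1}\big(T_s[f(\cdot,\tilde x)]-f(\cdot,\tilde x)\big)$ converges in $C_\infty(X)$, which is exactly the assertion that $f(\cdot,\tilde x)\in Dom(G)$ with $G[f(\cdot,\tilde x)]=(Lf-\tilde G f)(\cdot,\tilde x)$; rearranging and recalling that $\tilde G$ here is the $\tilde X$-action gives $Lf=(G+\tilde G)f$ pointwise.

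The essential use of the boundedness hypothesis is concentrated in the second paragraph: it is precisely norm-convergence of the $\tilde T_s$ difference quotient that lets one compose with the merely strongly continuous, uniformly bounded $T_s$ and still pass to the limit; without it the two generators could not be separated. The item demanding the most care is the transfer of boundedness (and hence norm-continuity of $\{\tilde T_s\}$) from $C_\infty(\tilde X)$ to the product space $C_\infty(X\times\tilde X)$, including the verification that the extended $\tilde G$ maps into $C_\infty(X\times\tilde X)$; once that is in place, everything else is the standard product-rule splitting of a difference quotient.
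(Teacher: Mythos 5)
Your proof is correct, but it takes a genuinely different route from the paper's. The paper's argument is a short core-plus-closedness argument: since $\mathcal{L}$ is a core for $L$ (Theorem \ref{MThmSR}), one picks $f_n\in\mathcal{L}$ with $f_n\to f$ and $Lf_n=(G+\tilde G)f_n\to Lf$; boundedness of $\tilde G$ makes $\{\tilde Gf_n\}$ convergent, hence $Gf_n=Lf_n-\tilde Gf_n$ is Cauchy, and after freezing $\tilde x$ one invokes closedness of the generator $G$ on $C_\infty(X)$ to conclude $f(\cdot,\tilde x)\in Dom(G)$ and to identify $Gf(\cdot,\tilde x)$. You instead work directly with difference quotients of the product semigroup $\Phi_s=T_s\tilde T_s$, exploiting that a bounded generator yields a norm-continuous semigroup, so the $\tilde T_s$-quotient converges in operator norm and can be composed with the merely strongly continuous $T_s$. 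Both proofs rest on the same structural input (the factorisation $\Phi_s=T_s\tilde T_s$ and $L=G+\tilde G$ on a dense set), but they invoke different abstract facts: the paper uses existence of a core and closedness of generators; you use the exponential formula for bounded generators. Your version buys an explicit identification of the limit of $s^{-1}(T_sf-f)$ without ever re-entering the core $\mathcal{L}$; its cost is that you need the extension of $\{T_s\}$ alone to $C_\infty(X\times\tilde X)$, including its strong continuity at the element $\tilde Gf$ --- a fact the paper never isolates (Theorem \ref{MThmSR} only asserts strong continuity of the product $\Phi_s$). This is a small loose end rather than a gap: it follows from uniform boundedness plus density of $\mathcal{L}$ (on which $T_s$ acts factorwise), or it can be avoided altogether by restricting to a fixed $\tilde x$ \emph{before} letting $s\downarrow 0$, since then the troublesome term becomes $T_s\bigl[(\tilde Gf)(\cdot,\tilde x)\bigr]$ with $(\tilde Gf)(\cdot,\tilde x)\in C_\infty(X)$, and ordinary strong continuity of $\{T_s\}$ on $C_\infty(X)$ applies.
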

\begin{proof}
Let  $f\in Dom(L)$. Since $\mathcal L$ is a core for the generator $L$, there exists $ \{f_n\}_{n\in\mathbb N}\subset \mathcal L$ such that $f_n\to f$ and $(G+\tilde G)f_n=Lf_n\to Lf$. As $\tilde G$ is bounded $\tilde Gf_n\to \tilde G f$ and so $\{\tilde Gf_n\}_{n\in\mathbb N}$ is Cauchy in $C_\infty(X\times\tilde X)$. For each $\tilde x\in\tilde X$ $f_n(\cdot,\tilde x)\to f(\cdot,\tilde x)$ in $C_\infty(X)$ and $f_n(\cdot,\tilde x)\in Dom(G)$ for each $n\in\mathbb N$, by the definition of $\mathcal L$. If we show that $Gf_n(\cdot,\tilde x)$ is Cauchy in $C_\infty(X)$ we are done as $G$ is a closed operator on $C_\infty(X)$. This follows from the inequality
\begin{equation*}
|(Gf_n-Gf_m)(x,\tilde x)|\le \|Lf_n-Lf_m\|+\|\tilde Gf_n-\tilde Gf_m\|,
\end{equation*}
and by taking $n$ and $m$ large.
\end{proof}

We now  identify two independent processes associated with the semigroups $\{T_s\}_{s\ge0}$ and $\{\tilde T_s\}_{s\ge0}$ from the process on $X\times\tilde X$ induced by the semigroup $\{\Phi_s\}_{s\ge0}$ in Theorem \ref{MThmSR}-(ii).
\begin{definition}\label{coordinate}
Let $\{\Phi_s\}_{s\ge0}$ be a Feller semigroup generated as in Theorem \ref{MThmSR}-(ii) and denote by $Y^{(t,\tilde x)}:=\{Y^{(t,\tilde x)}(s)\}_{s\ge 0}$, $(t,\tilde x)\in X\times \tilde X$ the induced Feller process.\\
For each $(t,\tilde x)\in X\times \tilde X$, define the process $X^t:=\{X^t(s)\}_{s\ge 0}$ and the process  $\tilde X^{\tilde x}:=\{\tilde X^{\tilde x}(s)\}_{s\ge 0}$ to be the processes induced by the collection of probability measures  on $X$ and  on $\tilde X$ defined as
\[
P(X^t(s)\in B):=\Phi_s1(B\times \tilde X)(t,\tilde x),\quad  B\in \mathcal{B}(X),
\]
and 
\[
 P(\tilde X^{\tilde x}(s)\in\tilde B):=\Phi_s1(X\times \tilde B)(t,\tilde x),\quad \tilde B\in \mathcal{B}(\tilde X),
\]
 respectively. Define the stochastic process $\{(X^t(s),\tilde X^{\tilde x}(s))\}_{s\ge 0}$ on $X\times\tilde X$ by 
\[
\mathbf{P}(X^{t}(s)\in B, \tilde X^{\tilde x}(s)\in \tilde B ): =\Phi_s1(B\times  \tilde B)(t,\tilde x),\quad B\in \mathcal B(X),\tilde B\in \mathcal B(\tilde X).
\]
\end{definition}

\begin{corollary}\label{IndepCoor}
Let $\{\Phi_s\}_{s\ge0}$ be a Feller semigroup generated as in Theorem \ref{MThmSR}-(ii).\\
Then $Y^{(t,\tilde x)}(s)=(X^t(s),\tilde X^{\tilde x}(s)), \ s\in \mathbb R^+$, $(t,\tilde x)\in X\times\tilde X$. Moreover the processes $X^t$ and $\tilde X^{\tilde x} $ are independent and they  equal  the processes generated by $G$ and $\tilde G$ on  $C_\infty(X)$ and $C_\infty(\tilde X)$, respectively. 
\end{corollary}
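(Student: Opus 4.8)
The plan is to derive everything from the multiplicative structure $\Phi_s=T_s\tilde T_s=\tilde T_sT_s$ of Theorem \ref{MThmSR}-(i), in which $T_s$ acts on the $X$-variable and $\tilde T_s$ on the $\tilde X$-variable. First I would record that $\Phi_s$ acts on a product function as a tensor product. For $f\in Dom(G)$ and $\tilde f\in Dom(\tilde G)$ the function $(x,y)\mapsto f(x)\tilde f(y)$ lies in the invariant core $\mathcal L$, and applying $\tilde T_s$ in the second variable (which leaves the factor $f$ unchanged, being constant in $y$) followed by $T_s$ in the first variable (which leaves $\tilde T_s\tilde f$ unchanged) gives
\[
\Phi_s\big(f(\cdot)\tilde f(\cdot)\big)(t,\tilde x)=(T_sf)(t)\,(\tilde T_s\tilde f)(\tilde x).
\]
By density of the domains and uniform boundedness of the semigroups this identity extends to all $f\in C_\infty(X)$, $\tilde f\in C_\infty(\tilde X)$.

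Next I would pass from this identity to the level of transition kernels. Writing $p_s^\Phi((t,\tilde x),\cdot)$, $p_s^T(t,\cdot)$, $p_s^{\tilde T}(\tilde x,\cdot)$ for the (sub-)probability kernels representing $\Phi_s$, $T_s$, $\tilde T_s$ on bounded measurable functions, the displayed identity says that $p_s^\Phi((t,\tilde x),\cdot)$ and the product measure $p_s^T(t,\cdot)\otimes p_s^{\tilde T}(\tilde x,\cdot)$ integrate every product $f(x)\tilde f(y)$ with $f,\tilde f\in C_\infty$ to the same value. Since the linear span of such products is measure-determining on $X\times\tilde X$ (by Stone--Weierstrass together with the Riesz representation theorem), the two measures coincide:
\[
p_s^\Phi\big((t,\tilde x),B\times\tilde B\big)=p_s^T(t,B)\,p_s^{\tilde T}(\tilde x,\tilde B),\qquad B\in\mathcal B(X),\ \tilde B\in\mathcal B(\tilde X).
\]
Granting this, the three assertions are immediate from Definition \ref{coordinate}. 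Since $\mathbf P(Y^{(t,\tilde x)}(s)\in S)=\Phi_s1(S)(t,\tilde x)=p_s^\Phi((t,\tilde x),S)$, the law of $Y^{(t,\tilde x)}(s)$ agrees with the joint law $\mathbf P(X^t(s)\in B,\tilde X^{\tilde x}(s)\in\tilde B)=\Phi_s1(B\times\tilde B)(t,\tilde x)$ on rectangles, hence on all of $\mathcal B(X\times\tilde X)$; this is the identity $Y^{(t,\tilde x)}(s)=(X^t(s),\tilde X^{\tilde x}(s))$. Using $\tilde T_s\mathbf 1=\mathbf 1$ in the Feller (conservative) case one reads off $\mathbf P(X^t(s)\in B)=\Phi_s1(B\times\tilde X)(t,\tilde x)=p_s^T(t,B)$ and symmetrically $\mathbf P(\tilde X^{\tilde x}(s)\in\tilde B)=p_s^{\tilde T}(\tilde x,\tilde B)$, so the marginals are exactly the transition probabilities of the processes generated by $G$ and $\tilde G$, and the factorisation of $p_s^\Phi$ is precisely the independence of $X^t(s)$ and $\tilde X^{\tilde x}(s)$.

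Finally, to upgrade independence at a fixed time to independence of the whole processes I would invoke the Markov/Chapman--Kolmogorov structure: because $p_s^\Phi$ factorises as a product kernel for every $s$, iterating the semigroup shows that every finite-dimensional distribution of the Feller process $Y^{(t,\tilde x)}$ splits as the product of the corresponding finite-dimensional distributions of the $p^T$- and $p^{\tilde T}$-processes, so $X^t$ and $\tilde X^{\tilde x}$ are independent Markov processes with transition semigroups $\{T_s\}$ and $\{\tilde T_s\}$, i.e.\ they coincide with the processes generated by $G$ and $\tilde G$. The one genuinely delicate step is the passage from the tensor identity, valid a priori only on the core $\mathcal L\subset C_\infty(X\times\tilde X)$, to the product-kernel identity on the full Borel $\sigma$-algebra; this is where the measure-representation of Feller semigroups and the measure-determining property of $C_\infty$-products are needed, and everything afterwards is a direct comparison with the defining formulas of Definition \ref{coordinate}.
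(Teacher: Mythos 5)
Your proposal is correct and follows essentially the same route as the paper: the paper's (very terse) proof simply asserts the key factorisation $\Phi_s\mathbf 1(B\times\tilde B)(t,\tilde x)=\mathbf P(X^t(s)\in B)\,\mathbf P(\tilde X^{\tilde x}(s)\in\tilde B)$ and notes that everything follows from it, which is exactly the identity you establish via the tensor identity $\Phi_s(f\tilde f)=(T_sf)(\tilde T_s\tilde f)$ and the measure-determining property of products of $C_\infty$ functions. Your additional steps (the density extension, the Riesz/Stone--Weierstrass kernel argument, and the Chapman--Kolmogorov upgrade to independence of the full processes) are precisely the details the paper leaves implicit, so the proposal is a correct, fleshed-out version of the paper's own argument.
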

\begin{proof}
The first statement is straightforward. The latter two statements follow from
\begin{equation*}
\Phi_s1(B\times  \tilde B)(t,\tilde x)=\mathbf{P}(X^{t}(s)\in B)\mathbf{P}(\tilde X^{\tilde x}(s)\in \tilde B), \ B\in \mathcal B(X),\ \tilde B\in \mathcal B(\tilde X),\ s\in\mathbb R^+.
\label{eq:}
\end{equation*}

\end{proof}

\subsection{Linear evolution equation: RL Case}
\subsubsection{Well-posedness and stochastic representation}

We drop the subscript $t$ from the operators $-_{t}D^{(\nu)}_{a+*}$ and $-_{t}D^{(\nu)}_{a+}$.\\
With respect to the notation in Theorem \ref{MThmSR}, from now on 
\begin{align}
G&=-D^{(\nu)}_{a+},&& D=Dom(-D^{(\nu)}_{a+}),&&& C_\infty(X)=C_a([a,b]), &&&&\text{ or}
\label{RLTriple}\\
G&=-D^{(\nu)}_{a+*},&& D=Dom(-D^{(\nu)}_{a+*}),&&& C_\infty(X)=C([a,b]), &&&&\text{  and}
\label{CaputoTriple}\\
\tilde G &= A, && \tilde D=Dom (A), &&& C_\infty(\tilde X)=C_\infty(\mathbb R^d), &&&& \label{FellerTriple}
\end{align}

where the triples (\ref{RLTriple}) and (\ref{CaputoTriple}) are the ones given in Definition \ref{3processes}-(i) and Definition \ref{3processes}-(ii), respectively. The triple (\ref{FellerTriple}) is any such triple arising from  a Feller semigroup on $C_\infty(\mathbb R^d)$ with $C^\infty_\infty(\mathbb R^d)\subset Dom(A)$, and we denote the corresponding  process by $X^{x,A}:=\{X^{x,A}(s)\}_{s\ge 0}$.\\ 

We will show that the potential operator $(-L)^{-1}$ of $L= -D^{(\nu)}_{a+}+A$   (as in Theorem \ref{MThmSR}-(iii)) is bounded. We will use this fact to solve problem (\ref{ProblemRL}).\\
Define the  stopping times 
\begin{equation*}
\tau_a^{Y}((t,x)):=\inf_{s}\{s\ge 0: Y^{(t,x)}(s)\notin (a,b]\times \mathbb{R}^d\},\quad \tau_a^{X}(t):=\inf_{s}\{s\ge 0: X^t(s) \notin (a,b]\},
\label{eq:}
\end{equation*}
where $Y^{(t,x)}=\{Y^{(t,x)}(s)\}_{s\ge 0}$ and $X^t=\{X^{t}(s)\}_{s\ge 0}$ are defined as in Definition \ref{coordinate}.

\begin{proposition}\label{3st}
The stopping times $\tau_a^{Y}((t,x))$, $\tau_a^{X}(t)$ and $\tau_a^{(\nu)}(t)$ have the same distribution, in particular
\begin{equation}
\mathbf E [\tau^{Y}_a((t,x))] =\mathbf E [\tau^{(\nu)}_a(t)] <\infty, 
\label{finitest}
\end{equation}
 uniformly in $(t,x)\in[a,b]\times \mathbb R^{d}$. Moreover   $\tau_a^{Y}((t,x))$ is independent of $ \{\tilde X^x(s)\}_{s\ge 0}$.
\end{proposition}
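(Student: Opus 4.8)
The plan is to reduce all three stopping times to the first passage of a single one-dimensional process to the boundary point $a$, and then to read off the independence directly from the product structure of the joint process. First I would use the factorisation of the joint process. By Corollary \ref{IndepCoor} we have $Y^{(t,x)}(s)=(X^t(s),\tilde X^x(s))$ for all $s\ge0$, where $X^t$ is the process generated by $G=-D^{(\nu)}_{a+}$ on $C_a([a,b])$, that is the killed process $X^{t,(\nu)}_{a+}$ of Definition \ref{3processes}-(iii), and $\tilde X^x=X^{x,A}$ is independent of it. Since the second coordinate remains in $\mathbb R^d$, the path $Y^{(t,x)}$ leaves $(a,b]\times\mathbb R^d$ exactly when its first coordinate leaves $(a,b]$; hence $\tau_a^{Y}((t,x))=\tau_a^{X}(t)$ almost surely, and in particular this time does not depend on the starting point $x$ of the second coordinate.

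Next I would identify the law of $\tau_a^{X}(t)$ with that of $\tau_a^{(\nu)}(t)$. Here $\tau_a^X(t)$ is the first time the killed process $X^{t,(\nu)}_{a+}$ leaves $(a,b]$, i.e. its killing time. By Proposition \ref{3processesresults}-(i) the processes $X^{t,(\nu)}_{a+}$ and $X^{t,(\nu)}_{a+*}$ assign equal probability to $\{\,\cdot\in(c,d)\}$ for every $a<c<d\le b$, so by the Markov property they have the same law for as long as they remain in $(a,b]$, and their first exit times from $(a,b]$ therefore coincide in distribution. Proposition \ref{3processesresults}-(ii) identifies the law of the first exit time from $(a,b]$ of $X^{t,(\nu)}_{a+*}$ with that of $\tau_a^{(\nu)}(t)=\inf\{s\ge0:X^{t,(\nu)}_+(s)\le a\}$. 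Chaining these identities shows that $\tau_a^{Y}((t,x))$, $\tau_a^{X}(t)$ and $\tau_a^{(\nu)}(t)$ share a single distribution, so that $\mathbf E[\tau_a^Y((t,x))]=\mathbf E[\tau_a^{(\nu)}(t)]$; since the right-hand side is independent of $x$ and $\sup_{t\in[a,b]}\mathbf E[\tau_a^{(\nu)}(t)]<\infty$ by Proposition \ref{3processesresults}-(iii), the expectation is uniformly bounded in $(t,x)\in[a,b]\times\mathbb R^d$.

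For the independence assertion I would note that, by the first step, $\tau_a^{Y}((t,x))=\tau_a^{X}(t)$ is a functional of the first-coordinate path $\{X^t(s)\}_{s\ge0}$ alone, hence $\sigma(\{X^t(s)\}_{s\ge0})$-measurable. The independence of the coordinate processes in Corollary \ref{IndepCoor} then immediately gives that $\tau_a^{Y}((t,x))$ is independent of $\{\tilde X^x(s)\}_{s\ge0}$.

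The step I expect to require the most care is the distributional identity of the second paragraph: it is clear that, being non-increasing, all three auxiliary processes leave $(a,b]$ through the point $a$, but the three are genuinely different constructions (unrestricted, absorbed, and killed), so one must invoke the agreement of their laws inside $(a,b]$ from Proposition \ref{3processesresults}-(i)-(ii) to conclude that the killing time of $X^{t,(\nu)}_{a+}$, the absorption time of $X^{t,(\nu)}_{a+*}$, and the hitting time of $a$ by $X^{t,(\nu)}_+$ all coincide in law. Everything else—the reduction to the first coordinate and the independence—is a direct consequence of the product structure established in Corollary \ref{IndepCoor}.
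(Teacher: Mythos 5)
Your overall strategy is the paper's own: factor $Y^{(t,x)}$ into independent coordinates via Corollary \ref{IndepCoor}, observe that $Y^{(t,x)}$ exits $(a,b]\times\mathbb R^d$ exactly when its first coordinate exits $(a,b]$, transfer the exit-time law to $\tau_a^{(\nu)}(t)$ through Proposition \ref{3processesresults}, obtain finiteness from Proposition \ref{3processesresults}-(iii), and read off independence from the product structure. The conclusions you reach are correct.

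One identification is off, however, and it matters most for the independence claim. The stopping times in the Proposition are built from Definition \ref{coordinate}, which applies to the \emph{Feller} product semigroup of Theorem \ref{MThmSR}-(ii); in the present setting that is the semigroup $\Phi^*$ coming from the Caputo triple (\ref{CaputoTriple}), so the first coordinate $X^t$ is (a copy in law of) the \emph{absorbed} process $X^{t,(\nu)}_{a+*}$, not the killed process $X^{t,(\nu)}_{a+}$ of the RL triple, as you claim. The RL product semigroup of Theorem \ref{MThmSR}-(iii) is only sub-Feller, so Corollary \ref{IndepCoor} is not available for it; moreover, under the killed reading the pair $(X^t,\tilde X^x)$ is dead after time $\tau_a^{X}(t)$, so the assertion that $\tau_a^{Y}((t,x))$ is independent of the \emph{whole} path $\{\tilde X^x(s)\}_{s\ge 0}$ would not even parse. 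With the correct (Caputo) identification your argument goes through verbatim and in fact simplifies: Proposition \ref{3processesresults}-(ii) directly identifies the law of the absorption time with that of $\tau_a^{(\nu)}(t)$, so your detour through the killed process, and in particular the appeal to ``the Markov property'' (which, as written, is under-justified: agreement of the one-dimensional marginals at each fixed time plus monotonicity is all that is needed, since $\{\tau>s\}=\{X(s)>a\}$ for non-increasing paths), can be dropped. This is exactly how the paper argues: $\mathbf P(\tau_a^{Y}((t,x))>s)=\mathbf P(X^t(s)>a)=\mathbf P(X^{t,(\nu)}_{+}(s)>a)=\mathbf P(\tau_a^{(\nu)}(t)>s)$.
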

\begin{proof}
By Corollary \ref{IndepCoor} the process $X^t$ has the same distribution of $X^{t,(\nu)}_{a+*}$. In particular $X^t$ is non-increasing and 
\begin{equation*}
\mathbf{P}(X^t(s)>a)=\mathbf{P}(X^{t,(\nu)}_{+}(s)>a).
\label{eq:}
\end{equation*}
Also $\{\tau_a^{Y}((t,x))>s\}=\{X^t(s)>a\}\cap \{\tilde X^x(s)\in \mathbb{R}^d\}$ and by independence of $X^t(s)$ and $\tilde X^x(s)$ (Corollary \ref{IndepCoor}) we have
\begin{equation*}
\mathbf{P}(\tau_a^{Y}((t,x))>s)=\mathbf{P}(\{X^t(s)>a)\mathbf{P}( \tilde X^x_s\in \mathbb{R}^d)
=\mathbf{P}(X^{t,(\nu)}_{+}(s)>a)= \mathbf{P}(\tau_a^{(\nu)}(t)>s).
\end{equation*}
This proves that $\tau_a^{Y}((t,x))$, $\tau_a^{X}(t)$ and $\tau_a^{(\nu)}(t)$ have the same distribution. In particular we obtain the equality in (\ref{finitest}). \\
The inequality in (\ref{finitest}) follows from Proposition \ref{3processesresults}-(iii).\\
The last statement can be proved using  the  computations in this proof.
\end{proof}

From now on we will use the notation $\tau_{a}^{(\nu)}(t)$ for the stopping time $\tau_a^{Y}((t,x))$. In the next proposition we obtain the boundedness and the stochastic representation for  the potential operator $(-L)^{-1}$.

\begin{proposition}\label{PotRepr}
Let  $\Phi^*:=\{T_s^*\tilde T_s\}_{s\ge 0}$ be the Feller semigroup  obtained in Theorem \ref{MThmSR}-(ii)  for the triples (\ref{CaputoTriple}) and (\ref{FellerTriple}). Denote the generator of $\Phi^*$ by $L^*$.\\
 Let $\Phi:=\{T_s\tilde T_s\}_{s\ge 0}$ be the  semigroup obtained from in Theorem \ref{MThmSR}-(iii) for the triples (\ref{RLTriple}) and (\ref{FellerTriple}). Denote the generator of $\Phi$ by $L$.\\
Then  $(-L)^{-1}: C_{a,\infty}([a,b]\times \tilde X)\to C_{a,\infty}([a,b]\times \tilde X)$ is well-defined and it is bounded. Moreover the equality $(-L^*)^{-1}g=(-L)^{-1}g$ holds if $g\in C_{a,\infty}([a,b]\times \tilde X)$ and we obtain the stochastic representation
\begin{equation*}
(-L)^{-1} g(t,x)=\mathbf E\int_{0}^{\tau^{(\nu)}_a(t)} e^{As} g(X^{t,(\nu)}_+(s), x)ds.
\label{eq:}
\end{equation*}

\end{proposition}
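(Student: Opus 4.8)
The plan is to realise the potential operator as the time integral of the semigroup $\{\Phi_s\}_{s\ge0}$ and to read off both its boundedness and its probabilistic form from the decomposition of $\{\Phi_s\}$ into its two independent coordinate processes. First I would record, using Corollary \ref{IndepCoor} together with Definition \ref{3processes}, that for $g\in C_{a,\infty}([a,b]\times\tilde X)$ the RL (killing) semigroup acts by
\[
\Phi_s g(t,x)=\mathbf E\big[g\big(X^{t,(\nu)}_+(s),X^{x,A}(s)\big)\mathbf 1_{\{s<\tau^{(\nu)}_a(t)\}}\big],
\]
while the Caputo (absorbing) semigroup acts by $\Phi^*_s g(t,x)=\mathbf E[g(X^{t,(\nu)}_{a+*}(s),X^{x,A}(s))]$. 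Since $g(a,\cdot)=0$ and the absorbed path sits at $a$ after time $\tau^{(\nu)}_a(t)$, the contribution of $\{s\ge\tau^{(\nu)}_a(t)\}$ vanishes and $\Phi^*_s g=\Phi_s g$ on $C_{a,\infty}([a,b]\times\tilde X)$. This identity is what will force $(-L^*)^{-1}g=(-L)^{-1}g$ for such $g$.

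Next I would set $Ug:=\int_0^\infty\Phi_s g\,ds$ and prove convergence and boundedness. The pointwise bound $|\Phi_s g(t,x)|\le\|g\|\,\mathbf P(s<\tau^{(\nu)}_a(t))$ integrates to
\[
|Ug(t,x)|\le\|g\|\int_0^\infty\mathbf P(s<\tau^{(\nu)}_a(t))\,ds=\|g\|\,\mathbf E[\tau^{(\nu)}_a(t)]\le\|g\|\sup_{t}\mathbf E[\tau^{(\nu)}_a(t)],
\]
which is finite and uniform by Proposition \ref{3st}, giving boundedness with $\|U\|\le\sup_t\mathbf E[\tau^{(\nu)}_a(t)]$. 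To identify $U$ with $(-L)^{-1}$ I would run the standard potential-operator computation for a strongly continuous, uniformly bounded semigroup: the difference quotient $h^{-1}(\Phi_h-I)Ug=-h^{-1}\int_0^h\Phi_s g\,ds\to-g$ shows $Ug\in Dom(L)$ with $LUg=-g$; conversely, for $u\in Dom(L)$ one integrates $\Phi_s Lu=\frac{d}{ds}\Phi_s u$ and uses $\Phi_s u\to0$ as $s\to\infty$ (which holds because $\tau^{(\nu)}_a(t)<\infty$ a.s., so the killed semigroup decays) to obtain $U(-L)u=u$. Hence $-L$ is invertible with bounded inverse $U$, and since $Ug\in Dom(L)\subset C_{a,\infty}([a,b]\times\tilde X)$ the operator maps the space into itself. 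Applying the same integral to $\Phi^*$ and invoking the identity $\Phi^*_s g=\Phi_s g$ of the previous paragraph then yields $(-L^*)^{-1}g=(-L)^{-1}g$.

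Finally I would extract the stochastic representation. Applying Fubini to $Ug=\int_0^\infty\Phi_s g\,ds$ (justified by the integrable bound above) gives
\[
Ug(t,x)=\mathbf E\int_0^{\tau^{(\nu)}_a(t)}g\big(X^{t,(\nu)}_+(s),X^{x,A}(s)\big)\,ds.
\]
Conditioning on the whole path of the decreasing process $X^{t,(\nu)}_+$ — which by Proposition \ref{3st} and Corollary \ref{IndepCoor} is independent of $X^{x,A}$ and determines $\tau^{(\nu)}_a(t)$ — and using that $e^{As}g(y,\cdot)(x)=\mathbf E[g(y,X^{x,A}(s))]$, I replace the inner factor to obtain
\[
Ug(t,x)=\mathbf E\int_0^{\tau^{(\nu)}_a(t)}e^{As}g\big(X^{t,(\nu)}_+(s),x\big)\,ds,
\]
which is the claimed formula. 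The main obstacle is the careful bookkeeping in this last step: justifying the interchange of expectation, time integration over the random horizon $\tau^{(\nu)}_a(t)$, and the action of $e^{As}$, all of which hinge on the independence of the decreasing process from $X^{x,A}$ and on the uniform finiteness of $\mathbf E[\tau^{(\nu)}_a(t)]$. Matching the killing semigroup value with the stopped integral, i.e.\ the equality $\Phi_s g=\Phi^*_s g$ on $C_{a,\infty}$, is the conceptual crux that makes the whole identification work.
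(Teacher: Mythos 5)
Your proposal is correct and follows essentially the same route as the paper: the crux identity $\Phi^*_s g=\Phi_s g$ for $g$ vanishing on $\{a\}\times\tilde X$ (absorbed vs.\ killed paths agree before $\tau^{(\nu)}_a(t)$ and contribute nothing after), the bound $|Ug(t,x)|\le\|g\|\,\mathbf E[\tau^{(\nu)}_a(t)]$ via Proposition \ref{3st}, and the splitting of the time integral at $\tau^{(\nu)}_a(t)$ to get the stochastic representation. The only difference is cosmetic: you prove inline the standard identification of $\int_0^\infty\Phi_s\,ds$ with $(-L)^{-1}$, which the paper instead imports from potential-operator theory (\cite[Theorem 1.1']{dynkin1965}), and you derive the representation by expanding to a joint expectation and re-conditioning on the time-change path rather than keeping $e^{As}$ explicit throughout.
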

\begin{proof}

For each $(t,x)\in[a,b]\times \mathbb R^d$, $s\in\mathbb R^+$  
\begin{equation}
T_s^*\tilde T_s \mathbf{1}(B\times \tilde B)(t,x)=T_s\tilde T_s\mathbf{1}(B\times \tilde B)(t,x)
\label{semigroupsequal}
\end{equation}
 if $a\notin B$, $B\in\mathcal B(X), \ \tilde B\in \mathcal B(\tilde X)$  from Proposition \ref{3processesresults}-(i) and Corollary \ref{IndepCoor}. Let $g\in  C_{a,\infty}([a,b]\times \mathbb R^d)$, then
\begin{align*}\nonumber
(-L^*)^{-1} g(t,x)&=\int_0^\infty T^*_s\tilde T_s g(t,x)ds\\ \nonumber
&=\mathbf E\left(\int_{0}^{\tau^{(\nu)}_a(t)}+\int_{\tau^{(\nu)}_a(t)}^\infty\right)\tilde T_s g(X^{t,(\nu)}_{a+*}(s), x)ds\\
&=\mathbf E\int_{0}^{\tau^{(\nu)}_a(t)}\tilde T_sg(X^{t,(\nu)}_{a+*}(s), x)ds+0, \label{resrepres}
\end{align*}
where we used Proposition \ref{3st}. A similar computation using (\ref{semigroupsequal}) yields 
\begin{equation}\label{reprepotop}
(-L)^{-1} g(t,x)=\mathbf E\int_{0}^{\tau^{(\nu)}_a(t)}\tilde T_s g(X^{t,(\nu)}_{a+}(s), x)ds.
\end{equation}
That $(-L)^{-1}: C_{a,\infty}([a,b]\times \tilde X)\to C_{a,\infty}([a,b]\times \tilde X)$ is well-defined and bounded follows from Proposition \ref{3processesresults}-(iii) with the representation (\ref{reprepotop}), as
\begin{equation*}
|(-L)^{-1}g(t,x)|\le\mathbf E \int_0^{\tau_a^{(\nu)}(t)}|\tilde T_sg(X^{t,(\nu)}_+(s),\cdot)(x)|ds\le \|g\|\mathbf E [\tau_a^{(\nu)}(t)]<\infty,
\label{eq:}
\end{equation*}
and noting that  $E [\tau_a^{(\nu)}(t)]\le E [\tau_a^{(\nu)}(b)]<\infty$.
\end{proof}

We are now ready to prove the well-posedness of problem (\ref{ProblemRL}) for two notions of solutions (following \cite{KV-1}) and to obtain stochastic representations for such solutions.
\begin{definition}
Let   $g\in C_{a,\infty}([a,b]\times\mathbb R^d)$. A function $u\in C_{a,\infty}([a,b]\times \mathbb R^d)$ is said to be a \emph{solution in the domain of the generator to problem} (\ref{ProblemRL}) if $u\in Dom(L)$ and $u$ satisfies the equalities in (\ref{ProblemRL}), where $L$ is the generator obtained in Theorem \ref{MThmSR}-(iii).
\end{definition}
\begin{definition}
Let $g\in B([a,b]\times\mathbb R^d)$. A function $u\in B([a,b]\times \mathbb R^d)$ is said to be a \emph{generalised solution  to problem} (\ref{ProblemRL})  if $u=\lim_{n\to\infty}u_n$ point-wise, where $u_n$ is the solution in the domain of the generator to problem (\ref{ProblemRL})  with $\{g_n\}_{n\in\mathbb N}\subset  C_{a,\infty}([a,b]\times\mathbb R^d)$,  $\lim_{n\to\infty}g_n= g$ a.e. and $\sup_n \|g_n\|<\infty$.
\end{definition}

\begin{theorem}\label{SRRL}
Let $\nu$ be a function satisfying conditions $(H0),$ $(H1a)$ and let $A$ be the generator of a Feller semigroup on $C_\infty(\mathbb R^d)$.  
\begin{enumerate}
	\item [(i)] If $g\in C_{a,\infty}([a,b]\times\mathbb R^d)$, then there exists a unique $u\in C_{a,\infty}([a,b]\times\mathbb R^d)$  solution in the domain of the generator to problem (\ref{ProblemRL}). Moreover $u$ admits the stochastic representation
\begin{equation}
u(t,x)=\mathbf E\left[ \int^{\tau^{(\nu)}_a(t)}_0 e^{As}g(X^{t,(\nu)}_{+}(s),\cdot)(x)ds\right].
\label{RLstochrep}
\end{equation}

\item [(ii)] If $g\in B([a,b]\times\mathbb R^d)$ and (H2) holds, then there exists a unique $u\in B([a,b]\times\mathbb R^d)$ generalised solution to problem (\ref{ProblemRL}). Moreover $u$ has the stochastic representation given in (\ref{RLstochrep}).
\end{enumerate}
\end{theorem}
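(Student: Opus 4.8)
The plan is to exploit the potential-operator machinery established in Proposition \ref{PotRepr}. For part (i), given $g \in C_{a,\infty}([a,b]\times\mathbb{R}^d)$, I would set $u := (-L)^{-1}g$. By Proposition \ref{PotRepr} this is well-defined, lies in $C_{a,\infty}([a,b]\times\tilde X)$, and carries the stochastic representation
\begin{equation*}
u(t,x) = \mathbf{E}\int_0^{\tau^{(\nu)}_a(t)} e^{As} g(X^{t,(\nu)}_+(s),\cdot)(x)\,ds,
\end{equation*}
which is exactly (\ref{RLstochrep}) once one notes that the killed process $X^{t,(\nu)}_{a+}$ and the free decreasing process $X^{t,(\nu)}_+$ agree up to the exit time $\tau^{(\nu)}_a(t)$. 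The standard semigroup fact that $(-L)^{-1}\colon B \to Dom(L)$ is a bijection onto its image with $L(-L)^{-1}g = -g$ (quoted from \cite[Theorem 1.1']{dynkin1965} in the introduction) then gives both membership $u \in Dom(L)$ and the equation $Lu = -g$, i.e. the first equality in (\ref{ProblemRL}); the boundary condition $u(a,x)=0$ is automatic since $u \in C_{a,\infty}$. Uniqueness follows because any two solutions in the domain of the generator differ by an element of $Dom(L)$ annihilated by $L$, and injectivity of $(-L)^{-1}$ (equivalently, that $0$ is not an eigenvalue, guaranteed by $\mathbf{E}[\tau^{(\nu)}_a(t)]<\infty$) forces this difference to vanish.

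For part (ii) I would argue by approximation. Given $g \in B([a,b]\times\mathbb{R}^d)$, choose $\{g_n\} \subset C_{a,\infty}([a,b]\times\mathbb{R}^d)$ with $g_n \to g$ a.e.\ and $\sup_n\|g_n\| < \infty$; let $u_n$ be the solution in the domain of the generator associated with $g_n$, so each $u_n$ enjoys (\ref{RLstochrep}) by part (i). The key point is to pass to the pointwise limit inside the stochastic representation. Writing
\begin{equation*}
u_n(t,x) = \mathbf{E}\int_0^{\tau^{(\nu)}_a(t)} e^{As} g_n(X^{t,(\nu)}_+(s),\cdot)(x)\,ds,
\end{equation*}
I would recast the integrand as an integral against the joint law of $(X^{t,(\nu)}_+(s), X^{x,A}(s))$, which under hypothesis (H2) is absolutely continuous with respect to Lebesgue measure for each fixed $s$. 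Consequently the a.e.\ convergence $g_n \to g$ survives composition with these processes almost surely, and the uniform bound $\sup_n\|g_n\| < \infty$ together with $\mathbf{E}[\tau^{(\nu)}_a(t)] \le \mathbf{E}[\tau^{(\nu)}_a(b)] < \infty$ supplies a dominating envelope. Dominated convergence then yields a well-defined pointwise limit $u(t,x)$ satisfying (\ref{RLstochrep}) for $g$, and by construction $u$ is a generalised solution. Uniqueness of the generalised solution reduces to showing the limit is independent of the approximating sequence, which again follows from the stochastic representation: two admissible sequences produce the same limit because the representation depends on $g$ only through its values along the (a.e.-determined) trajectories.

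The main obstacle is the measure-theoretic justification in part (ii): ensuring that the a.e.\ equality $\lim_n g_n = g$ transfers to an almost-sure equality of the composed processes. This is precisely where (H2) is indispensable — without absolute continuity of the transition laws $p_s^{(\nu)+}(t,\cdot)$ and $p_s^A(x,\cdot)$, a Lebesgue-null set on which $g_n \not\to g$ could be charged with positive probability by the process, and dominated convergence would fail. I would therefore spend the bulk of the argument verifying that the joint law of $(X^{t,(\nu)}_+(s), X^{x,A}(s))$, integrated against $ds$ up to $\tau^{(\nu)}_a(t)$, assigns zero mass to Lebesgue-null sets, so that the exceptional set where convergence fails is invisible to the expectation. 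The remaining steps — the dominated convergence estimate and the independence-of-sequence argument for uniqueness — are routine once this absolute-continuity transfer is in place.
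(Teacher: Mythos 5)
Your proposal follows essentially the same route as the paper's proof: for part (i) you invoke Proposition \ref{PotRepr} together with Dynkin's theorem on the bounded potential operator to get the bijection $(-L)^{-1}\colon C_{a,\infty}\to Dom(L)$, existence, uniqueness, and the stochastic representation; for part (ii) you use the same a.e.\ approximating sequence, the same factorisation of the expectation through the product of the densities provided by (H2) and independence, the same dominating function $\sup_n\|g_n\|\,\mathbf P(\tau_a^{(\nu)}(t)>s)$ with $\mathbf E[\tau_a^{(\nu)}(t)]<\infty$, and the same sequence-independence argument for uniqueness. This matches the paper's argument in both structure and detail, so no further comparison is needed.
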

\begin{proof}
\begin{enumerate}
	\item [(i)]

The potential operator $(-L)^{-1}$ of the semigroup $\{\Phi_s\}_{s\ge 0}$ is bounded by  Proposition \ref{PotRepr}. Hence by Theorem 1.1' in \cite{dynkin1965} $(-L)^{-1}: C_{a,\infty}([a,b]\times \mathbb R^d)\to Dom(L)$ is a bijection, and $(-L)^{-1}g$ solves the equation
\begin{equation*}
L (-L)^{-1} g(t,x)=-g(t,x), \quad (t,x)\in[a,b]\times \mathbb R^d,\ g\in C_{a,\infty}([a,b]\times \mathbb R^d),
\label{eq:}
\end{equation*}
giving the existence and uniqueness of a solution in the domain of the generator.\\
 The stochastic representation follows from Proposition \ref{PotRepr}.

	\item [(ii)] 
	Let $g\in B([a,b]\times \mathbb R^d)$ and take $\{g_n\}_{n\in\mathbb N}\subset C_{a,\infty}([a,b]\times \mathbb R^d)$ such that $g_n\to g$ a.e. as $n\to\infty$  and $\sup_n\|g_n\|_\infty<\infty$ (such sequence can be  constructed using \cite[Theorem 7-(i)-(ii), Appendix C]{Evans}). Note that condition (H2)  and $g\in  C_{a,\infty}([a,b]\times \mathbb R^d)$  imply that  
\begin{align*}
\mathbf E g(X^{t,(\nu)}_{a+}(s),X^{x,A}(s))=&\ \mathbf Eg(X^{t,(\nu)}_{a+*}(s),X^{x,A}(s))\\
=& \int_a^t \int_{\mathbb R^d} g(z,y) p_s^{(\nu)+}(t,z)  p_s^{A}(x,y)dydz.
\label{eq:}
\end{align*} \\
	Then by Dominated Convergence Theorem (DCT)  for each $(t,x)\in[a,b]\times \mathbb R^d$ 
	\[
	F_{(t,x),n}(s):=\mathbf E[ e^{As} g_n(X^{t,(\nu)}_{a+}(s),\cdot)(x)]\to \mathbf E [ e^{As} g(X^{t,(\nu)}_{a+}(s), \cdot)(x)]=:F_{(t,x)}(s),
	\]
	as $n\to\infty$. Define $G_{(t,x)}(s):=\sup_n\|g_n\| \mathbf P(\tau_a^{(\nu)}(t)>s)$. Then
\[
\sup_n|F_{(t,x),n}(s)|\le G_{(t,x)}(s),\quad \int_0^\infty G_{(t,x)}(s)ds=\sup_n\|g_n\|\mathbf E [\tau^{(\nu)}_a(t)]<\infty,
\]
and by DCT we obtain 
\begin{equation*}
\lim_{n\to\infty}\mathbf E\left[ \int^{\tau^{(\nu)}_a(t)}_0 e^{As} g_n(X^{t,(\nu)}_+(s),\cdot)(x)ds\right]=\mathbf E\left[ \int^{\tau^{(\nu)}_a(t)}_0 e^{As}g(X^{t,(\nu)}_+(s),\cdot)(x)ds\right],
\label{eq:}
\end{equation*}
which gives existence of a generalised solution, independence of the approximating sequence, hence uniqueness, and the claimed stochastic representation.
\end{enumerate}

\end{proof}

\subsubsection{Approximation by Yosida operators}

\begin{lemma}\label{YosidaApprox}
Let $L_\lambda:=\lambda L(\lambda-L)^{-1}$ be the Yosida approximation for the generator $L$ of a Feller semigroup on $C_\infty(\mathbb{R}^d)$. Let $g\in C_{\infty}([a,b]\times \mathbb R^d)$. Let $u_\lambda \in C_{a,\infty}([a,b]\times \mathbb R^d)$ be the generalised solution  to problem (\ref{ProblemRL})  with  $A=L_\lambda$. Let $u\in C_{a,\infty}([a,b]\times \mathbb R^d)$  be the generalise solution to problem (\ref{ProblemRL}), with $A=L$.\\
Then for each $t\in[a,b]$, $u_\lambda (t,x)\to u(t,x)$ as $ \lambda\to\infty$,  uniformly in $x\in\mathbb R^d$.
\end{lemma}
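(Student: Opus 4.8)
The plan is to reduce the statement to a single dominated-convergence argument built on the stochastic representation (\ref{RLstochrep}). Since $L$ generates a Feller semigroup on $C_\infty(\mathbb{R}^d)$, its Yosida approximation $L_\lambda=\lambda L(\lambda-L)^{-1}=\lambda^2(\lambda-L)^{-1}-\lambda$ is a \emph{bounded} operator that itself generates a Feller semigroup $\{e^{L_\lambda s}\}_{s\ge0}$ of positive contractions (positivity from the positivity of the resolvent, contractivity from $\|(\lambda-L)^{-1}\|\le\lambda^{-1}$). Hence both $u_\lambda$ and $u$ fall under Theorem \ref{SRRL} with $A=L_\lambda$ and $A=L$ respectively, and subtracting their representations gives
\begin{equation*}
u_\lambda(t,x)-u(t,x)=\mathbf{E}\left[\int_0^{\tau^{(\nu)}_a(t)}\big(e^{L_\lambda s}-e^{Ls}\big)g\big(X^{t,(\nu)}_+(s),\cdot\big)(x)\,ds\right].
\end{equation*}
Thus it suffices to control this integrand, uniformly in $x$, and then pass $\lambda\to\infty$ inside the expectation.

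I would then invoke two inputs. First, the Hille--Yosida theorem: for each fixed $f\in C_\infty(\mathbb{R}^d)$ and each fixed $s\ge0$ one has $\|e^{L_\lambda s}f-e^{Ls}f\|\to0$ as $\lambda\to\infty$, with $\|e^{L_\lambda s}\|\le1$ and $\|e^{Ls}\|\le1$. Second, Proposition \ref{3processesresults}(iii), which gives $\mathbf{E}[\tau^{(\nu)}_a(t)]<\infty$. Taking the supremum over $x$ inside the expectation,
\begin{equation*}
\sup_{x}\big|u_\lambda(t,x)-u(t,x)\big|\le\mathbf{E}\left[\int_0^{\tau^{(\nu)}_a(t)}\big\|\big(e^{L_\lambda s}-e^{Ls}\big)g\big(X^{t,(\nu)}_+(s),\cdot\big)\big\|\,ds\right].
\end{equation*}
For every $s<\tau^{(\nu)}_a(t)$ the point $X^{t,(\nu)}_+(s)$ lies in $(a,b]$, so $f=g(X^{t,(\nu)}_+(s),\cdot)\in C_\infty(\mathbb{R}^d)$ and the integrand tends to $0$ by the Yosida convergence, while the contraction bounds dominate it by $2\|g\|\,\mathbf{1}_{\{s<\tau^{(\nu)}_a(t)\}}$, which is integrable for $\mathbf{E}\otimes ds$ since $\mathbf{E}\int_0^\infty 2\|g\|\,\mathbf{1}_{\{s<\tau^{(\nu)}_a(t)\}}\,ds=2\|g\|\,\mathbf{E}[\tau^{(\nu)}_a(t)]<\infty$. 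The dominated convergence theorem then yields $\sup_x|u_\lambda(t,x)-u(t,x)|\to0$, which is exactly the assertion.

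The main obstacle is that the $s$-integration runs up to the random and unbounded time $\tau^{(\nu)}_a(t)$, whereas the Yosida convergence is only \emph{locally} uniform in $s$. The device that resolves this is to push the supremum over $x$ inside the sup-norm and then use the integrable envelope $2\|g\|\,\mathbf{1}_{\{s<\tau\}}$ provided by $\mathbf{E}[\tau^{(\nu)}_a(t)]<\infty$: dominated convergence then absorbs both the unbounded time horizon and the uniformity in $x$ at once. If one prefers to avoid dominated convergence on the product space, an equivalent route is to split the integral at a large deterministic $T$, estimate the tail by $2\|g\|\,\mathbf{E}[(\tau^{(\nu)}_a(t)-T)^+]$ (small for large $T$), and on $[0,T]$ exploit that $\{g(y,\cdot):y\in[a,b]\}$ is a compact subset of $C_\infty(\mathbb{R}^d)$ (by the uniform continuity of $g$), over which the Yosida convergence is uniform for $s\in[0,T]$. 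The only remaining bookkeeping is the joint measurability in $(\omega,s)$ of the integrand, needed to apply Tonelli, and this is routine.
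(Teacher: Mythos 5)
Your proof is correct and follows essentially the same route as the paper: both subtract the stochastic representations from Theorem \ref{SRRL} for $A=L_\lambda$ and $A=L$, invoke the Yosida-approximation convergence $\|(e^{L_\lambda s}-e^{Ls})f\|\to 0$ (the paper cites Ethier--Kurtz, Chapter 1, Proposition 2.7), and conclude by dominated convergence using the constant envelope $\|g\|$ together with $\mathbf{E}[\tau_a^{(\nu)}(t)]<\infty$ from Proposition \ref{3processesresults}-(iii). Your additional remarks (that $L_\lambda$ generates a Feller semigroup so Theorem \ref{SRRL} applies, the joint measurability needed for Tonelli, and the alternative truncation-at-$T$ argument) only make explicit details the paper leaves implicit.
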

\begin{proof}
By  \cite[Chapter 1, Proposition 2.7]{EK} we have that for each $g\in C_{\infty}([a,b]\times\mathbb R^d)$, $t\in[a,b]$, 
\[
\|(e^{L_\lambda s}- e^{Ls})g(t,\cdot) \|_{\mathbb R^d}\to 0\quad\text{as }\lambda\to\infty,
\]
 uniformly for $s\ge 0$ in compact sets.\\
Pick the constant function $\|g\|$ as the dominating function. Then $\|e^{L_\lambda s}g(t,\cdot)(x)\|\le 1\|g(\cdot, x)\|\le \|g\|$ which implies
\begin{equation*}
\mathbf E\left[ \int_0^{\tau_a^{(\nu)}(t)} |e^{L_\lambda s}g(X^{t,(\nu)}_+(s),\cdot)(x)|d s\right]\le \|g\|E\left[ \tau_a^{(\nu)}(t)\right]<\infty,
\label{eq:}
\end{equation*}
and the result follows from the application of  DCT.
\end{proof}


\subsubsection{Series representation }\label{section:convergenceseriesRL}
Under the additional assumptions 
\[
 \text{$A$ is bounded and $\nu$ satisfies assumption (H1b)},
\] 
we  give a series representation for the solution in the domain of the generator and the generalised solution to problem (\ref{ProblemRL}) obtained in Theorem \ref{SRRL}.\\
Once we have the series representation  we will obtain convergence of a sequence of  series representations of solutions to the stochastic representation obtained in Theorem \ref{SRRL} for $A$ the generator of a Feller semigroup on $C_\infty(\mathbb R^d)$ (see Theorem \ref{RLconvofseries} below).\\

Let us give well-posedness  and stochastic representation for the solution to the (FODE) problem
\begin{align}\nonumber
- D^{(\nu)}_{a+}u(t)&=-g(t),&& t\in(a,b],\\
 u(a)&=0,&& g\in B([a,b]).\label{ProblemRLode}
\end{align}
\begin{definition}\label{DsolRL} 
Let $g\in C_a([a,b])$. A function $u\in C_{a}([a,b])$ is a \emph{solution in the domain of the generator for  problem} (\ref{ProblemRLode})
if $u\in Dom(-D^{(\nu)}_{a+})$ and $u$ satisfies (\ref{ProblemRLode}).
\end{definition}

\begin{definition}\label{GsolRL} A function $u\in B([a,b])$ is a generalised solution to problem (\ref{ProblemRLode})
if $u=\lim_{n\to\infty} u_n$  point-wise, where $u_n$ is the solution in the domain of the generator to problem (\ref{ProblemRLode}) for $g_n\in C_a([a,b])$, $n\in\mathbb N$,  $g_n\to g$  a.e. and $\sup_{n\in\mathbb{N}}\|g_n\|<\infty$.
\end{definition}
The following is just a simpler version of Theorem \ref{SRRL}.
\begin{theorem}\label{MThmRLode}
Let $\nu$ be a function satisfying conditions (H0), (H1a). If $g\in C_a([a,b])$ there exists a unique solution in the domain of the generator $u\in C_a([a,b])$ to problem (\ref{ProblemRLode}), and $u$ has the representation  $u=I_{a+}^{(\nu)}g$.\\
Under the additional assumption  (H2), if $g\in B([a,b])$ there exists a unique $u\in B([a,b])$ generalised solution  to problem  (\ref{ProblemRLode}), also with the representation  $u=I_{a+}^{(\nu)}g$.
\end{theorem}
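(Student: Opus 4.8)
The plan is to recognise problem (\ref{ProblemRLode}) as the ``spaceless'' specialisation of problem (\ref{ProblemRL}), in which the Feller generator $A$ plays no role: the relevant semigroup is no longer the product semigroup $\{\Phi_s\}$ of Theorem \ref{MThmSR} but simply the sub-Feller semigroup $\{T^{(\nu)a+}_s\}_{s\ge0}$ on $C_a([a,b])$ generated by $G:=-D^{(\nu)}_{a+}$ (Definition \ref{3processes}-(iii)). The entire argument of Theorem \ref{SRRL} then goes through with $C_{a,\infty}([a,b]\times\mathbb R^d)$ replaced by $C_a([a,b])$ and the potential operator $(-L)^{-1}$ replaced by $(-G)^{-1}$.

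For part (i) I would first observe that the potential operator of $\{T^{(\nu)a+}_s\}$ is exactly $I_{a+}^{(\nu)}$: by Definition \ref{D-Inu} together with property (i) listed after it,
\[
(-G)^{-1}g(t)=\int_0^\infty T^{(\nu)a+}_s g(t)\,ds=\mathbf E\left[\int_0^{\tau_a^{(\nu)}(t)}g\big(X^{t,(\nu)}_+(s)\big)\,ds\right]=I_{a+}^{(\nu)}g(t),
\]
and this operator is bounded on $C_a([a,b])$ because $\sup_t\mathbf E[\tau_a^{(\nu)}(t)]<\infty$ under (H1a) by Proposition \ref{3processesresults}-(iii). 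With boundedness in hand, Dynkin's theorem \cite[Theorem 1.1']{dynkin1965} gives that $(-G)^{-1}=I_{a+}^{(\nu)}\colon C_a([a,b])\to Dom(-D^{(\nu)}_{a+})$ is a bijection satisfying $G(-G)^{-1}g=-g$, i.e.\ $-D^{(\nu)}_{a+}\big(I_{a+}^{(\nu)}g\big)=-g$ on $(a,b]$. Since $I_{a+}^{(\nu)}g\in C_a([a,b])$ vanishes at $a$ by construction, $u=I_{a+}^{(\nu)}g$ is the unique solution in the domain of the generator, which yields simultaneously existence--uniqueness and the claimed representation.

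For part (ii) I would mimic the approximation argument of Theorem \ref{SRRL}-(ii) verbatim. Choose $\{g_n\}\subset C_a([a,b])$ with $g_n\to g$ a.e.\ and $\sup_n\|g_n\|<\infty$ (via \cite[Theorem 7-(i)-(ii), Appendix C]{Evans}); each $u_n=I_{a+}^{(\nu)}g_n$ is the solution in the domain of the generator by part (i). Assumption (H2) lets me rewrite
\[
I_{a+}^{(\nu)}g_n(t)=\int_0^\infty\!\!\int_{(a,t]}g_n(y)\,p_s^{(\nu)+}(t,dy)\,ds
\]
as an integral of $g_n$ against a density in $y$, so that $g_n\to g$ a.e.\ (Lebesgue) makes the integrand converge pointwise a.e.; dominating by $\sup_n\|g_n\|\,\mathbf P(\tau_a^{(\nu)}(t)>s)$, which is integrable in $s$ since $\mathbf E[\tau_a^{(\nu)}(t)]<\infty$, the Dominated Convergence Theorem yields $u_n(t)\to I_{a+}^{(\nu)}g(t)$ for every $t$. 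This proves at once the existence of a generalised solution, its independence of the approximating sequence (hence uniqueness), and the representation $u=I_{a+}^{(\nu)}g$.

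I do not expect a genuine obstacle, since everything is a strict specialisation of results already established; the single point requiring the standing hypotheses is the passage to the limit in part (ii), where (H2) is precisely what converts the expectation defining $I_{a+}^{(\nu)}$ into a Lebesgue integral against a density, so that the merely a.e.\ convergence of the $g_n$ becomes usable in the Dominated Convergence Theorem.
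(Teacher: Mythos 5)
Your proposal is correct and follows essentially the same route as the paper, which gives no separate proof but declares Theorem \ref{MThmRLode} ``just a simpler version of Theorem \ref{SRRL}'': your part (i) is exactly that proof specialised to the spaceless case (potential operator $(-G)^{-1}=I_{a+}^{(\nu)}$ bounded via Proposition \ref{3processesresults}-(iii), then Dynkin's Theorem 1.1'), and your part (ii) is the same (H2)-plus-dominated-convergence approximation argument. Nothing is missing.
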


\begin{theorem}\label{MThmSeries}
Let $\nu$ be a function satisfying assumption (H0), (H1b).  Suppose that $A$ is  bounded. 
\begin{enumerate}
	\item If  $g \in C_{a,\infty}([a,b]\times \mathbb R^d)$ the unique solution $u \in C_{a,\infty}([a,b]\times \mathbb R^d)$ in the domain of the generator to problem (\ref{ProblemRL}) has the series representation
	\begin{equation}\label{S:AboundedRL}
u(t,x) =  \sum_{n=0}^{\infty} \left( (I^{(\nu)}_{a+} A)^n I^{(\nu)}_{a+}g \right )(t,x),
\end{equation}
where the convergence is in the sense of the norm  of $C_{a,\infty}([a,b]\times \mathbb R^d)$.

\item [(ii)] If $g \in B([a,b]\times \mathbb R^d)$ and (H2) holds, the unique generalised solution $u \in B([a,b]\times \mathbb R^d)$ to problem (\ref{ProblemRL}) has the series representation given in (\ref{S:AboundedRL}).
\end{enumerate}
\end{theorem}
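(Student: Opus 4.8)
The plan is to recognise the series in (\ref{S:AboundedRL}) as the Neumann series for $(I-I^{(\nu)}_{a+}A)^{-1}$ applied to $I^{(\nu)}_{a+}g$, and to pin it down as the solution through a fixed-point identity. The starting observation is that $A$ acts on the spatial variable $x$ while $I^{(\nu)}_{a+}$ acts on the time variable $t$, so the two operators commute on $C_{a,\infty}([a,b]\times\mathbb R^d)$; hence $(I^{(\nu)}_{a+}A)^n=I^{(\nu),n}_{a+}A^n$ and $(I^{(\nu)}_{a+}A)^nI^{(\nu)}_{a+}=I^{(\nu),n+1}_{a+}A^n$. First I would combine this with Theorem \ref{T:n-iter} and the boundedness of $A$ to estimate, for $g\in C_{a,\infty}([a,b]\times\mathbb R^d)$,
\begin{equation*}
\big\|(I^{(\nu)}_{a+}A)^nI^{(\nu)}_{a+}g\big\|\le \|A\|^n\,\|g\|\,\frac{(b-a)^{(n+1)\beta}}{(\Gamma(\beta+1))^{n+1}}\prod_{k=0}^{n}B(k\beta+1,\beta),
\end{equation*}
and then invoke the factorial estimate $\prod_{k=0}^{n}B(k\beta+1,\beta)\le \frac{1}{(n+1)!\,\beta^{2(n+1)}}$ obtained in the proof of Theorem \ref{T:n-iter} to see the right-hand side is summable. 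Consequently the series converges in the norm of $C_{a,\infty}([a,b]\times\mathbb R^d)$, the operator series $\sum_{n\ge0}(I^{(\nu)}_{a+}A)^n$ converges in operator norm, and $I-I^{(\nu)}_{a+}A$ is boundedly invertible with this sum as its inverse.

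Next I would identify the solution. By Theorem \ref{SRRL}(i) there is a unique solution $u\in C_{a,\infty}([a,b]\times\mathbb R^d)$ in the domain of the generator, and since $A$ is bounded, Proposition \ref{domofA} gives $u(\cdot,x)\in Dom(-D^{(\nu)}_{a+})$ for each $x$ with $Lu=-D^{(\nu)}_{a+}u+Au=-g$. Rearranging yields $-D^{(\nu)}_{a+}u=-(g+Au)$ with $u(a,\cdot)=0$, and since $(g+Au)(\cdot,x)\in C_a([a,b])$ (using $g\in C_{a,\infty}$, boundedness of $A$, and $Au(a,\cdot)=A(u(a,\cdot))=0$), the left-inverse property of $I^{(\nu)}_{a+}$ from Theorem \ref{MThmRLode}, applied in the $t$-variable for each fixed $x$, gives
\begin{equation*}
u=I^{(\nu)}_{a+}(g+Au)=I^{(\nu)}_{a+}g+(I^{(\nu)}_{a+}A)u,\qquad\text{i.e.}\qquad (I-I^{(\nu)}_{a+}A)u=I^{(\nu)}_{a+}g.
\end{equation*}
Applying the inverse constructed above yields $u=\sum_{n\ge0}(I^{(\nu)}_{a+}A)^nI^{(\nu)}_{a+}g$, which is (\ref{S:AboundedRL}). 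Uniqueness is already supplied by Theorem \ref{SRRL}(i), so no separate verification that the series lies in $Dom(L)$ is needed.

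For part (ii) I would first observe that the bound in Theorem \ref{T:n-iter} requires only $g\in B([a,b])$, so the series on the right of (\ref{S:AboundedRL}) converges uniformly for $g\in B([a,b]\times\mathbb R^d)$ as well; denote its sum by $v$. Choosing $g_n\in C_{a,\infty}([a,b]\times\mathbb R^d)$ with $g_n\to g$ a.e.\ and $\sup_n\|g_n\|<\infty$ as in the definition of the generalised solution, part (i) gives $u_n=\sum_{k\ge0}(I^{(\nu)}_{a+}A)^kI^{(\nu)}_{a+}g_n$. Using (H2) to write $I^{(\nu)}_{a+}$ as integration against the density $\int_0^\infty p_s^{(\nu)+}(t,\cdot)\,ds$, together with the boundedness of $A$ and dominated convergence (iterated through the finitely many factors in each term), each term satisfies $(I^{(\nu)}_{a+}A)^kI^{(\nu)}_{a+}g_n\to(I^{(\nu)}_{a+}A)^kI^{(\nu)}_{a+}g$ pointwise as $n\to\infty$. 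Because the tail of the series is bounded uniformly in $n$ by the Theorem \ref{T:n-iter} estimate with $\|g\|$ replaced by $\sup_n\|g_n\|$, I can interchange $\lim_n$ with $\sum_k$ to obtain $u_n\to v$ pointwise; as the generalised solution is by definition this pointwise limit (Theorem \ref{SRRL}(ii)), we conclude $u=v$.

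The step I expect to be the main obstacle is the limit passage in part (ii): since $g_n\to g$ only almost everywhere, the pointwise convergence of each composite term must be routed through the absolute-continuity hypothesis (H2) and dominated convergence, and one must check that the bounded extension of $A$ to $B(\mathbb R^d)$ interacts correctly with the integral operators $I^{(\nu)}_{a+}$ so that the composites remain well-defined measurable functions and the uniform tail estimate continues to apply after each application of $A$.
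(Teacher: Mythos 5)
Your proposal is correct and follows essentially the same route as the paper: both derive the fixed-point identity $u=I^{(\nu)}_{a+}g+(I^{(\nu)}_{a+}A)u$ from Proposition \ref{domofA} and Theorem \ref{MThmRLode}, control the iterates via Theorem \ref{T:n-iter}, and in part (ii) pass to the limit termwise using (H2), dominated convergence and the uniform tail bound. The only difference is cosmetic: you package the expansion as a Neumann-series inversion of $I-I^{(\nu)}_{a+}A$, whereas the paper iterates the identity finitely and shows the remainder $(I^{(\nu)}_{a+}A)^{N+1}u$ tends to zero, which is the same estimate in different clothing.
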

\begin{proof}
Note that by Riesz-Representation Theorem (\cite[Theorem 1.7.3]{KV0}) $A$ and $I^{(\nu)}_{a+}$ commute.
\begin{enumerate}
\item [(i)]
Let  $u\in C_{a,\infty}([a,b]\times \mathbb R^d)$ be the solution in the domain of the generator to problem (\ref{ProblemRL}) obtained in Theorem \ref{SRRL}. As $A$ is bounded and $ u\in Dom(L)$ we obtain by Proposition \ref{domofA} that for each $x\in \mathbb R^d$, $ u(\cdot,x)\in Dom(-D^{(\nu)}_{a+})$, $Lu(\cdot,x)=(-D^{(\nu)}_{a+}+A)u(\cdot,x)$. Hence $ u(\cdot,x)$ solves 
\begin{equation}
-D^{(\nu)}_{a+} u(\cdot,x)=-\tilde g(\cdot, x),\quad u(a,x)=0
\label{odeforx}
\end{equation} 
where $\tilde g (\cdot, x):=Au(\cdot, x)+g (\cdot, x)\in C_a[a,b]$, as $Au(a,\cdot)=0$. Hence, by Theorem \ref{MThmRLode}, $u(\cdot,x)$ is the unique solution in the domain of the generator to problem (\ref{odeforx}) and it has the representation
 $ u(\cdot, x)=I^{(\nu)}_{a+}\tilde g (\cdot, x)$.\\
By induction, for each $N\in \mathbb{N}$
\begin{equation}
 u(t,x)=  \sum_{n=0}^{N}  \left( (I^{(\nu)}_{a+} A)^n  I_{a+}^{(\nu)} g  \right )(t,x) +   \left( (I^{(\nu)}_{a+} A)^{N+1}  u  \right )(t,x).
\label{induction}
\end{equation}

Now observe that, 
\begin{align*}
a_n(t,x):=  \left( (I^{(\nu)}_{a+} A)^n   I_{a+}^{(\nu)}g \right ) (t,x) & \le \Big | \left( I^{(\nu)}_{a+} A)^n   I_{a+}^{(\nu)}g \right ) (t,x)  \Big |\\
  & \le  \|g\| \|A\|^n \Big | \left (I_{a+}^{(\nu),n+1} \mathbf{1}  \right ) (t) \Big | =: b_n(t).
	\end{align*}
Hence Theorem \ref{T:n-iter} implies the uniform convergence of $\sum_{n=0}^{\infty}b_n(t)$, which in turn implies the uniform convergence of $\sum_{n=0}^{\infty}a_n(t,x)$. 
 Moreover 
 \[ 
\Big | \left( ( I^{(\nu)}_{a+}A)^{N+1} u  \right ) (t,x) \Big | \le \|u\| \|A\|^N \Big | I_{a+}^{(\nu),N+1} (t,x) \Big |  \to 0,\quad N\to\infty,
 \]
due to the uniform convergence of  $\sum_{n=0}^{\infty} \|A\|^n \left (I_{a+}^{(\nu),n} \textbf{1} \right ) (t)$ on $[a,b]$, again by Theorem \ref{T:n-iter}. Then, letting $N \to \infty$ in the equality (\ref{induction}) yields the result in (\ref{S:AboundedRL}). 

\item [(ii)]
Consider a sequence $\{g_n\}_{n\in\mathbb N}\subset C_{a,\infty}([a,b]\times \mathbb R^d)$ such that $g_n\to g$  a.e. and $\sup_n\|g_n\|<\infty$. Fix $(t,x)\in [a,b]\times \mathbb R^d$. By DCT we obtain 
\begin{equation}
\lim_{n\to\infty}\sum_{m=0}^\infty F_{(t,x),n}(m)=\sum_{m=0}^\infty \left ((I^{(\nu)}_{a+} A)^m   I_{a+}^{(\nu)} g \right )(t,x),
\label{convgener}
\end{equation}
 where $F_{(t,x),n}(m):= (I^{(\nu)}_{a+} A)^m   I_{a+}^{(\nu)} g_n $. To see this observe that for every $m\in\mathbb{N}$ 
$$
\lim_{n\to\infty} F_{(t,x),n}(m)=\left ((I^{(\nu)}_{a+} A)^m  I_{a+}^{(\nu)} g \right )(t,x),
$$ and  $|F_{(t,x),n}(m)|\le F_{(t,x)}(m):=\sup_n\|g_n\| \|A\|^m(I^{(\nu)}_{a+})^{m+1}\mathbf 1(t)$.\\
By part (i) of this Theorem and part (ii) of  Theorem \ref{SRRL} the limit on the left-hand-side of  (\ref{convgener}) equals the unique generalised solution to problem (\ref{ProblemRL}). 
\end{enumerate}

\end{proof}

\subsubsection{Convergence of the series representation to the stochastic representation}\label{section:convergenceofseries}

\begin{theorem}\label{RLconvofseries} Let $\nu$ be a function satisfying assumptions (H0), (H1b). Let $A_\lambda$ be the Yosida approximation for the generator of a Feller semigroup $A$ on $C_\infty(\mathbb R^d)$. Let $g\in C_{\infty}([a,b]\times \mathbb R^d) $. \\
Then for each $t\in[a,b]$ 
\begin{equation}
\sum_{n=0}^\infty(I^{(\nu)}_{a+}A_\lambda)^nI^{(\nu)}_{a+}g (t,x) \to \mathbf E\left[\int_0^{\tau^{(\nu)}_a(t)} e^{As} g (X^{t,(\nu)}_+(s),\cdot)(x)ds\right],\quad \lambda\to\infty,
\label{eq:}
\end{equation}
 uniformly in $ x\in\mathbb R^d$.

\end{theorem}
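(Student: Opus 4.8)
The plan is to prove this as a two-step limiting argument, combining the series representation of Theorem~\ref{MThmSeries} (valid for the bounded operators $A_\lambda$) with the stochastic representation of Theorem~\ref{SRRL} and the Yosida-approximation convergence already established in Lemma~\ref{YosidaApprox}.

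First I would observe that, since $A_\lambda$ is bounded for each fixed $\lambda$ and $\nu$ satisfies (H0), (H1b), Theorem~\ref{MThmSeries}-(i) applies directly: the left-hand series $\sum_{n=0}^\infty (I^{(\nu)}_{a+}A_\lambda)^n I^{(\nu)}_{a+}g$ converges in $C_{a,\infty}([a,b]\times\mathbb R^d)$ and equals $u_\lambda$, the unique solution in the domain of the generator to problem~(\ref{ProblemRL}) with $A=A_\lambda$. By Theorem~\ref{SRRL}-(i), this same $u_\lambda$ admits the stochastic representation
\begin{equation*}
u_\lambda(t,x)=\mathbf E\left[\int_0^{\tau^{(\nu)}_a(t)} e^{A_\lambda s}\, g(X^{t,(\nu)}_+(s),\cdot)(x)\, ds\right].
\end{equation*}
So the task reduces entirely to showing that this stochastic representation converges, as $\lambda\to\infty$, to the analogous representation with $e^{As}$ in place of $e^{A_\lambda s}$, uniformly in $x$.

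Next I would carry out precisely this convergence. This is where Lemma~\ref{YosidaApprox} does the heavy lifting: by \cite[Chapter~1, Proposition~2.7]{EK}, for each $g\in C_\infty([a,b]\times\mathbb R^d)$ and $t\in[a,b]$ we have $\|(e^{A_\lambda s}-e^{As})g(t,\cdot)\|_{\mathbb R^d}\to0$ as $\lambda\to\infty$, uniformly for $s$ in compact sets. Composing with the non-increasing process $X^{t,(\nu)}_+$ preserves this pointwise-in-$s$ convergence of the integrand. To pass the limit through the expectation and the time integral, I would invoke dominated convergence exactly as in the proof of Lemma~\ref{YosidaApprox}: the integrand is bounded in modulus by $\|g\|\,\mathbf 1_{\{s\le\tau^{(\nu)}_a(t)\}}$ (using that $e^{A_\lambda s}$ is a contraction, as $A_\lambda$ generates a Feller semigroup), and the dominating function $\|g\|\,\mathbf P(\tau^{(\nu)}_a(t)>s)$ is integrable in $s$ because $\mathbf E[\tau^{(\nu)}_a(t)]\le\mathbf E[\tau^{(\nu)}_a(b)]<\infty$ by Proposition~\ref{3processesresults}-(iii). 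This yields
\begin{equation*}
u_\lambda(t,x)\to\mathbf E\left[\int_0^{\tau^{(\nu)}_a(t)} e^{As}\, g(X^{t,(\nu)}_+(s),\cdot)(x)\, ds\right],\quad\lambda\to\infty.
\end{equation*}

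The main obstacle, and the point requiring the most care, is the \emph{uniformity in $x$}. The convergence from \cite{EK} is uniform in $x$ (it is norm convergence in $C_\infty(\mathbb R^d)$) but only locally uniform in $s$; meanwhile the time integral runs over the random, unbounded range $[0,\tau^{(\nu)}_a(t)]$. I would handle this by splitting the $s$-integral at a large deterministic cutoff $S$: on $[0,S]$ the locally-uniform-in-$s$, uniform-in-$x$ convergence of the integrand combined with the bound $\mathbf E[\tau^{(\nu)}_a(t)\wedge S]<\infty$ controls the head uniformly in $x$, while the tail is dominated uniformly in both $x$ and $\lambda$ by $\|g\|\,\mathbf E[\tau^{(\nu)}_a(t)\mathbf 1_{\{\tau^{(\nu)}_a(t)>S\}}]$, which can be made arbitrarily small by choosing $S$ large since $\tau^{(\nu)}_a(t)$ is integrable. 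Because all these bounds are uniform in $x\in\mathbb R^d$, the resulting convergence is uniform in $x$, completing the proof.
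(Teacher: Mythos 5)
Your proposal is correct and takes essentially the same route as the paper, whose entire proof reads ``combine Lemma~\ref{YosidaApprox} with Theorem~\ref{MThmSeries}'': identify the series with the solution $u_\lambda$ for the bounded operator $A_\lambda$ via Theorem~\ref{MThmSeries} and Theorem~\ref{SRRL}, then pass to the limit in the stochastic representation using \cite[Chapter~1, Proposition~2.7]{EK} and dominated convergence, which is exactly the content of Lemma~\ref{YosidaApprox}. Your additional cutoff-at-$S$ argument only makes explicit the uniformity in $x$ that the paper's proof of Lemma~\ref{YosidaApprox} leaves implicit.
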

\begin{proof}
The result follows from combining Lemma \ref{YosidaApprox} with Theorem \ref{MThmSeries}.
\end{proof}

\subsection{Linear evolution equation: Caputo case}\label{section5}

We now transfer the results for the RL generalised fractional operator $-D^{(\nu)}_{a+}$  to the Caputo generalised fractional operator $-D^{(\nu)}_{a+*}$. We will indeed look at the problem 
\begin{align}\nonumber
(-_{t}D^{(\nu)}_{a+*}+A)u(t,x)&=-g(t,x), && (t,x)\in (a,b]\times \mathbb R^d,\\
 u(a,x)&=\phi_a(x), && x\in\mathbb R^d,
\label{ProblemCaputo}
\end{align}
where $g\in B([a,b]\times \mathbb R^d)$, $\phi_a\in Dom(A)\subset C_\infty(\mathbb R^d)$, $A$ is the generator of a Feller semigroup on $C_\infty(\mathbb R^d)$ with $C_\infty^\infty(\mathbb R^d)\subset Dom(A)$ and $\nu$ is a function satisfying conditions (H0), (H1a).\\
We again drop the subscript $t$  in $-_{t}D^{(\nu)}_{a+*}$.
\begin{remark}
Note that if $\phi_a\in Dom(A)$ then $u$ satisfies 
\begin{align*}
(-D^{(\nu)}_{a+\ast}+A)u(t,x)&=-g(t,x),\\
 u(a,x)&=\phi_a(x),
\end{align*}
 if and only if $\tilde u= u-\phi_a$ satisfies
\begin{align*}
(-D^{(\nu)}_{a+\ast}+A)\tilde u(t,x)&=-(g+A\phi_a)(t,x),\\
 \tilde u(a,x)&=0,
\end{align*}
 using  the fact that  $- D^{(\nu)}_{a+*} c(t,x)=0$ for all functions $c$ constant in the $t$ variable (which is an immediate consequence of Definition \ref{genC}). We indirectly use this fact to connect the results obtained in last section about RL type evolution equations to Caputo type evolution equations.
\end{remark}

\subsubsection{Well-posedness and stochastic representation}

\begin{definition}\label{domCap} Let $g\in  C([a,b]\times \mathbb R^d)$, $\phi_a\in Dom(A)$ such that $A\phi_a(x)=-g(a,x)\ \forall x\in\mathbb R^d$. A function
$u\in C([a,b]\times \mathbb R^d)$ is a \emph{solution in the domain of the generator to problem} (\ref{ProblemCaputo}) if $u=\tilde u+\phi_a$, where $\tilde u$ is a solution in the domain of the generator for problem (\ref{ProblemRL}) with $\tilde g=g+A\phi_a\in C_{a,\infty}([a,b]\times\mathbb R^d)$.
\end{definition}

\begin{definition}\label{genreCap} Let  $g\in  B([a,b]\times \mathbb R^d)$, $\phi_a\in Dom(A)$. A function
$u\in B([a,b]\times \mathbb R^d)$ is a \emph{generalised solution for problem} (\ref{ProblemCaputo}) if $u=\tilde u+\phi_a$, where $\tilde u$ is a generalised solution to  problem (\ref{ProblemRL}) for $\tilde g:=g +A\phi\in B([a,b]\times \mathbb R^d)$.
\end{definition}

\begin{theorem}\label{MThmCapSR}
Assume that $\nu$ is a function that satisfies (H0) and (H1a).  
\begin{enumerate}
	\item [(i)]  If $g\in C_{\infty}([a,b]\times \mathbb R^d)$ and $\phi_a\in Dom(A)$ such  that $A\phi_a(\cdot)=-g(a,\cdot)$, then there exists a unique solution $u\in C_\infty([a,b]\times \mathbb R^d)$ in the domain of the generator to problem  (\ref{ProblemCaputo}) and $u$ has the stochastic representation
\begin{equation}
u(t,x)=\mathbf E \left[\phi_a ( X^{x,A}(\tau_a^{(\nu)}(t)))\right]+\mathbf E\left[\int_0^{\tau_a^{(\nu)}(t)}g(X^{t,(\nu)}_{+}(s),X^{x,A}(s))d s\right].
\label{CaputoSR}
\end{equation}
\item [(ii)] If $g\in  B([a,b]\times \mathbb R^d)$, $\phi_a\in Dom(A)$ and (H2) holds, then there exists a unique  $u\in B([a,b]\times \mathbb R^d)$  generalised solution for problem (\ref{ProblemCaputo}) and $u$ has the stochastic representation  given by (\ref{CaputoSR}). 
\end{enumerate}
\end{theorem}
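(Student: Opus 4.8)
The plan is to exploit the reduction built into Definitions \ref{domCap} and \ref{genreCap}, which recast the Caputo problem (\ref{ProblemCaputo}) as the RL problem (\ref{ProblemRL}) via the substitution $u=\tilde u+\phi_a$ with forcing term $\tilde g=g+A\phi_a$. For part (i), I would first check that $\tilde g\in C_{a,\infty}([a,b]\times\mathbb R^d)$: since $\phi_a\in Dom(A)\subset C_\infty(\mathbb R^d)$ we have $A\phi_a\in C_\infty(\mathbb R^d)$, whence $\tilde g\in C_\infty([a,b]\times\mathbb R^d)$, and the compatibility hypothesis $A\phi_a(\cdot)=-g(a,\cdot)$ forces $\tilde g(a,\cdot)\equiv 0$. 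Theorem \ref{SRRL}(i) then furnishes a unique $\tilde u\in C_{a,\infty}([a,b]\times\mathbb R^d)$ solution in the domain of the generator to (\ref{ProblemRL}), carrying the representation (\ref{RLstochrep}) with $g$ replaced by $\tilde g$; existence and uniqueness of $u=\tilde u+\phi_a$ follow at once.

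The substantive step is turning the representation of $\tilde u$ into (\ref{CaputoSR}). I would split $\tilde g=g+A\phi_a$ inside the integral in (\ref{RLstochrep}). The $g$-term, after invoking the independence of the coordinate processes from Corollary \ref{IndepCoor}, becomes $\mathbf E[\int_0^{\tau_a^{(\nu)}(t)}g(X^{t,(\nu)}_{+}(s),X^{x,A}(s))\,ds]$, which is precisely the second term of (\ref{CaputoSR}). For the $A\phi_a$-term, the key pathwise identity is $\int_0^{\tau}e^{As}(A\phi_a)(x)\,ds=e^{A\tau}\phi_a(x)-\phi_a(x)$, valid because $\phi_a\in Dom(A)$ makes $s\mapsto e^{As}\phi_a$ differentiable with derivative $e^{As}A\phi_a$; here $\tau=\tau_a^{(\nu)}(t)$. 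Taking expectations and using that $\tau_a^{(\nu)}(t)$ is independent of $X^{x,A}$ (Proposition \ref{3st}), conditioning on $\tau$ gives $\mathbf E[e^{A\tau}\phi_a(x)]=\mathbf E[\phi_a(X^{x,A}(\tau))]$. Hence the $A\phi_a$-term contributes $\mathbf E[\phi_a(X^{x,A}(\tau_a^{(\nu)}(t)))]-\phi_a(x)$, and adding the $+\phi_a(x)$ coming from $u=\tilde u+\phi_a$ cancels the stray $-\phi_a(x)$, leaving exactly (\ref{CaputoSR}).

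For part (ii) the argument runs in parallel but invokes Theorem \ref{SRRL}(ii): under (H2), with $\tilde g=g+A\phi_a\in B([a,b]\times\mathbb R^d)$, there is a unique generalised solution $\tilde u$ to (\ref{ProblemRL}) carrying the same stochastic representation, and $u=\tilde u+\phi_a$ is the unique generalised solution to (\ref{ProblemCaputo}) by Definition \ref{genreCap}. The same decomposition of $\tilde g$ and the same $A\phi_a$-identity then yield (\ref{CaputoSR}) verbatim; crucially, no compatibility condition is needed here, since the generalised-solution framework only requires a.e.\ approximation of $\tilde g$ by elements of $C_{a,\infty}([a,b]\times\mathbb R^d)$, and the boundary $\{a\}\times\mathbb R^d$ is negligible for the absolutely continuous laws guaranteed by (H2).

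I expect the main obstacle to be the rigorous justification of the $A\phi_a$-identity and the interchanges it demands: one must confirm pathwise differentiability of $s\mapsto e^{As}\phi_a$, integrate it against the random, a.s.\ finite upper limit $\tau_a^{(\nu)}(t)$, and then legitimately pull the expectation through by Fubini while conditioning on the independent stopping time. These steps are underwritten by $\phi_a\in Dom(A)$, the contractivity of the Feller semigroup, and the finiteness $\sup_t\mathbf E[\tau_a^{(\nu)}(t)]<\infty$ from Proposition \ref{3processesresults}(iii), so the analytic content is light once the bookkeeping of independence and integrability is arranged correctly.
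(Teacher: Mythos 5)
Your proposal is correct and follows essentially the same route as the paper: reduce to the RL problem via $u=\tilde u+\phi_a$ with $\tilde g=g+A\phi_a$, invoke Theorem \ref{SRRL}, and then convert the $A\phi_a$-term of the representation into $\mathbf E\left[\phi_a(X^{x,A}(\tau_a^{(\nu)}(t)))\right]-\phi_a(x)$. The only difference is at that last step: the paper simply cites Dynkin's formula, whereas you rederive the identity by integrating $\tfrac{d}{ds}e^{As}\phi_a=e^{As}A\phi_a$ up to the independent random time and conditioning on it via Proposition \ref{3st} --- an equally valid, self-contained justification of the same fact.
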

\begin{proof}

\begin{enumerate}
\item [(i)]
By  the assumptions on $g$ and $\phi_a$ we have that $\tilde g:=g+A\phi_a(x)\in C_{a,\infty}([a,b]\times \mathbb R^d)$, and it follows from Theorem \ref{SRRL}-(i) that a unique solution $\tilde u$ in the domain of the generator  to problem (\ref{ProblemRL}) exists.\\
The above gives existence of a  solution in the domain of the generator to problem (\ref{ProblemCaputo}) and uniqueness.\\
By Theorem \ref{SRRL} $\tilde u$ has the stochastic representation 
\begin{align*}
\tilde u(t,x)&=\mathbf E\int_0^{\tau_a^{(\nu)}(t)}\tilde g(X^{t,(\nu)}_{+}(s),X^{x,A}(s))ds\\
&=\mathbf E\int_0^{\tau_a^{(\nu)}(t)} g(X^{t,(\nu)}_{+}(s),X^{x,A}(s))ds+\mathbf E\int_0^{\tau_a^{(\nu)}(t)} A\phi_a(X^{x,A}(s))ds.
\label{eq:}
\end{align*}
Consider $u=\tilde u+ \phi_a$, then by Dynkin formula (see \cite[Theorem 3.9.4]{KV0}) we have the equality
\[
\phi_a(x) +\mathbf E\int_0^{\tau_a^{(\nu)}(t)} A\phi_a(X^{x,A}(s))ds=\mathbf E \phi_a (X^{x,A}(\tau_a^{(\nu)}(t))),
\]
and we obtain the stochastic representation in (\ref{CaputoSR}).
	\item [(ii)] As $g+A\phi_a\in B([a,b]\times \mathbb R^d)$ existence and uniqueness follows immediately from Theorem \ref{SRRL}-(ii), and we have the stochastic representation (\ref{CaputoSR}) by the same argument at the end of part (i) of this proof.
\end{enumerate}

\end{proof}

\begin{remark}
The solution in the domain of the generator $u\in C_{\infty}([a,b]\times\mathbb R^d)$  of Theorem \ref{MThmCapSR} solves problem (\ref{ProblemCaputo}), in the sense that
\begin{align*}
 L^*u(t,x)&=L\tilde u(t,x)+ A\phi_a(x)\\
&=-g(t,x)-A\phi_a(x)+A\phi_a(x)=-g(t,x),
 \end{align*}
and $u(a,x)=\tilde u(a,x)+\phi_a(x)=\phi_a(x)$, where we use the fact that $u=\tilde u+\phi_a\in Dom(L^*)$, $L^*\tilde u=L\tilde u$ and $L^*\phi_a=A\phi_a$. Here  $L^*$ is the generator obtained in Theorem \ref{MThmSR}-(ii) and $L$ is the generator obtained in Theorem \ref{MThmSR}-(iii). For the equality $L^*u=(-D^{(\nu)}_{a+\ast}+A)u$, it is in general necessary to prove smoothness properties of $u$.  
\end{remark}
\begin{remark}
As mentioned in the introduction, all results for the solution in the domain of the generator hold (with the same proofs) if $A$ is the generator of a Feller semigroup on a bounded domain such that the respective conditions of Theorem \ref{MThmSR} are satisfied. To obtain the results for the generalised  solution it is necessary to modify assumption (H2). Such stochastic representations have been obtained for example in the case of Pearson diffusions (\cite[Theorem 4.2]{Leo13}).
\end{remark}

\begin{example}
In the standard Caputo case, i.e. $-D^{(\nu)}_{a+*}=-D^{\beta}_{a+*}$, $a=0$, the generalised solution $u$ to problem (\ref{ProblemCaputo})  has the stochastic representation
\begin{align}\nonumber
u(t,x)=& \int_{\mathbb R^d}\phi_0(y)\left(\int_0^\infty p^{A}_s(x,y)\frac{t}{\beta}s^{-\frac{1}{\beta}-1}\omega_{\beta}\left(ts^{-\frac{1}{\beta}};1,1\right)ds\right)dy\\
&+\int_{\mathbb R^d}\int^t_0g(z,y)\left(\int_0^\infty\int_0^\infty \mathbf 1 (s<r) \varphi^{\beta}_{t,s} (r,z) p^{A}_s(x,y)  drds\right)dzdy,\label{caprepr}
\end{align}
where 
\begin{align*}
\varphi^{\beta}_{t,s}(r,z):=&\mathbf 1 (s<r) p^{\beta+}_s(t,z)\frac{d}{dr}\int^0_{-\infty} p^{\beta+}_{r-s}(z,\gamma)d\gamma\\
=&\mathbf 1 (s<r) s^{-\frac{1}{\beta}}\omega_\beta\left((t-z)s^{-\frac{1}{\beta}};1,1\right)\frac{z}{\beta}(r-s)^{-\frac{1}{\beta}-1}\omega_\beta\left(z(r-s)^{-\frac{1}{\beta}};1,1\right)
\end{align*}
is the joint density of $(\tau^{\beta}_0(t), X^{t,\beta}_{0+*}(s))$ (see \cite[Proposition 4.2]{KV-1}), using the notation of assumption (H2) and Remark \ref{psbeta}. To obtain the last equality we used standard change of variables and identities for the stable densities $\omega_\beta(\cdot;\cdot,\cdot)$. In the homogeneous case ($g=0$), the  representation (\ref{caprepr}) agrees with the representations found in the literature, see for example \cite[Theorem 3.1]{Bae01}.
\end{example}

\subsubsection{Series representation}

\begin{theorem}\label{CapSeriesR}
Let $\nu$ be a function satisfying conditions (H0), (H1b). Let $A$ be a bounded linear operator on $C_\infty(\mathbb R^d)$.  
\begin{enumerate}
	\item [(i)] If $g\in C_\infty([a,b]\times\mathbb R^d)$, $\phi_a\in Dom(A),\ A\phi_a(\cdot)=-g(a,\cdot),$ then the unique solution $u\in C_\infty([a,b]\times\mathbb R^d)$ in the  domain of the generator to problem  (\ref{ProblemCaputo}) has the series representation
\begin{equation}
u(t,x)=\sum_{n=0}^\infty A^n\phi_aI^{(\nu),n}_{a+}\mathbf 1(t,x) + \sum_{n=0}^\infty (I^{(\nu)}_{a+}A)^nI^{(\nu)}_{a+}g(t,x).
\label{CaputoSeries}
\end{equation}

\item [(ii)] If $g\in B([a,b]\times\mathbb R^d)$, $\phi_a\in Dom(A)$, condition  (H2) holds, then the unique generalised solution $u\in B([a,b]\times\mathbb R^d)$ to problem  (\ref{ProblemCaputo}) has the series representation (\ref{CaputoSeries}).
\end{enumerate}

\end{theorem}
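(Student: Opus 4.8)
The plan is to reduce everything to the RL series representation of Theorem \ref{MThmSeries} via the very definition of a Caputo solution. By Definition \ref{domCap} (resp.\ Definition \ref{genreCap} for part (ii)) the solution is $u=\tilde u+\phi_a$, where $\tilde u$ is the solution in the domain of the generator (resp.\ the generalised solution) to the RL problem (\ref{ProblemRL}) with right-hand side $\tilde g:=g+A\phi_a$. Since $\phi_a\in Dom(A)$ and $A$ is bounded, $\tilde g$ lies in $C_{a,\infty}([a,b]\times\mathbb R^d)$ (resp.\ $B([a,b]\times\mathbb R^d)$), so Theorem \ref{MThmSeries} applies and gives the series for $\tilde u$. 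Using linearity and splitting $\tilde g=g+A\phi_a$, I would write
\[
\tilde u(t,x)=\sum_{n=0}^\infty (I^{(\nu)}_{a+}A)^n I^{(\nu)}_{a+} g(t,x)+\sum_{n=0}^\infty (I^{(\nu)}_{a+}A)^n I^{(\nu)}_{a+} A\phi_a(t,x),
\]
the splitting being legitimate because each of the two series converges uniformly by Theorem \ref{T:n-iter} (together with $\|A^n\,\cdot\,\|\le\|A\|^n\|\cdot\|$). The first of these is already the second series appearing in (\ref{CaputoSeries}), so it remains only to transform the second.

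The key step is the observation that $A\phi_a$ depends on $x$ alone, while $I^{(\nu)}_{a+}$ acts on $t$ and $A$ acts on $x$, and $A$ and $I^{(\nu)}_{a+}$ commute (Riesz representation, exactly as in the proof of Theorem \ref{MThmSeries}). A $t$-constant factor therefore pulls out of $I^{(\nu)}_{a+}$, and an $x$-constant factor pulls out of $A$. I would establish by induction that
\[
(I^{(\nu)}_{a+}A)^n I^{(\nu)}_{a+} A\phi_a(t,x)=A^{n+1}\phi_a(x)\, I^{(\nu),n+1}_{a+}\mathbf 1(t),\qquad n\ge 0.
\]
The base case $n=0$ is $I^{(\nu)}_{a+}A\phi_a=A\phi_a\, I^{(\nu)}_{a+}\mathbf 1$ because $A\phi_a$ is independent of $t$; the inductive step applies $A$ (sending $A^{n}\phi_a\mapsto A^{n+1}\phi_a$ and leaving the $t$-factor fixed) and then $I^{(\nu)}_{a+}$ (sending $I^{(\nu),n}_{a+}\mathbf 1\mapsto I^{(\nu),n+1}_{a+}\mathbf 1$ and leaving the $x$-factor fixed).

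Re-indexing with $m=n+1$ and absorbing the summand $\phi_a=A^0\phi_a\, I^{(\nu),0}_{a+}\mathbf 1$ coming from the shift $u=\tilde u+\phi_a$ then yields
\[
\phi_a+\sum_{n=0}^\infty (I^{(\nu)}_{a+}A)^n I^{(\nu)}_{a+} A\phi_a=\sum_{m=0}^\infty A^m\phi_a\, I^{(\nu),m}_{a+}\mathbf 1,
\]
which is precisely the first series in (\ref{CaputoSeries}); combining with the $g$-series above gives (\ref{CaputoSeries}). Convergence of this rearranged series is again immediate from Theorem \ref{T:n-iter} since $\|A^m\phi_a\|\le\|A\|^m\|\phi_a\|$. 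I expect the only genuinely delicate point to be justifying the splitting and re-indexing of the series, which is handled entirely by the uniform (hence absolute) convergence furnished by Theorem \ref{T:n-iter}; the remainder is the factorisation bookkeeping just described. Part (ii) is identical, replacing Definition \ref{domCap}, Theorem \ref{MThmSeries}-(i) and Theorem \ref{MThmCapSR}-(i) by their generalised-solution counterparts Definition \ref{genreCap}, Theorem \ref{MThmSeries}-(ii) and Theorem \ref{MThmCapSR}-(ii).
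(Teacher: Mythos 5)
Your proof is correct and takes essentially the same route as the paper: reduce to the RL problem (\ref{ProblemRL}) with right-hand side $\tilde g=g+A\phi_a$ via the definition of the Caputo solution, apply Theorem \ref{MThmSeries} to get the series for $\tilde u$, and split it by linearity using the uniform convergence from Theorem \ref{T:n-iter}. The only difference is that you spell out the factorisation $(I^{(\nu)}_{a+}A)^n I^{(\nu)}_{a+}A\phi_a=A^{n+1}\phi_a\,I^{(\nu),n+1}_{a+}\mathbf 1$ and the re-indexing that absorbs $\phi_a$ as the $n=0$ term, steps the paper leaves implicit when passing from (\ref{rLtocaputoseries}) to (\ref{CaputoSeries}).
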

\begin{proof}
\begin{enumerate}
	\item [(i)] Let $u\in C_\infty([a,b]\times\mathbb R^d)$ be the solution in the  domain of the generator to problem  (\ref{ProblemCaputo}). By Proposition \ref{domofA},  $\tilde u:=u-\phi_a\in Dom (L)\subset C_{a,\infty}([a,b]\times \mathbb R^d)$ solves 
\begin{equation}
-D^{(\nu)}_{a+*}\tilde u(t,x)=-A\tilde u(t,x)-(g(t,x)+A\phi_a(x)), \quad \tilde u(a,\cdot)=0.
\label{RLfromCAp}
\end{equation}
By the assumptions of the Theorem $\tilde g:= g+A\phi_a\in C_{a,\infty}([a,b]\times \mathbb R^d)$. Therefore by Theorem \ref{MThmSeries}-(i)  $\tilde u$ is the unique solution in the domain of the generator to problem (\ref{RLfromCAp}) and it has  the series representation
\begin{align}\nonumber
\tilde u(t,x) =&\sum_{n=0}^\infty (I^{(\nu)}_{a+}A)^nI^{(\nu)}_{a+}\tilde g(t,x)\\
=& \sum_{n=0}^\infty (I^{(\nu)}_{a+}A)^nI^{(\nu)}_{a+} g(t,x)+\sum_{n=0}^\infty (I^{(\nu)}_{a+}A)^nI^{(\nu)}_{a+}A \phi_a(t,x).
\label{rLtocaputoseries}
\end{align}
using the fact that both series in the right-hand side converge in $C_{\infty}([a,b]\times \mathbb R^d)$ by Theorem \ref{T:n-iter}.
Then $u=\tilde u+\phi_a$ has the series representation given in (\ref{CaputoSeries}).

\item [(ii)] For $g\in B([a,b]\times \mathbb R^d)$, let $\tilde u$ be the unique generalised solution to problem (\ref{ProblemRL}) with $\tilde g=g+A\phi_a$. Then by Theorem \ref{MThmSeries}-(ii) $\tilde u$ has the representation (\ref{rLtocaputoseries}), using the fact that both series in the right-hand side converge in $B([a,b]\times \mathbb R^d)$ by Theorem \ref{T:n-iter}. Then $u=\tilde u+\phi_a$ has representation (\ref{CaputoSeries}).
\end{enumerate}
\end{proof}

\begin{definition}
Let $\nu$ satisfy conditions (H0), (H1b) and let $A$ be bounded. We call $E_{(\nu)}(A(\cdot) I^{(\nu)}_{a+}\mathbf 1):B(\mathbb R^d)\to B([a,b]\times\mathbb R^d)$ \emph{the generalised Mittag-Leffler function for $A$ and $\nu$} , defined as 
\begin{equation}
\phi_a\mapsto E_{(\nu)}(A\phi_a I^{(\nu)}_{a+}\mathbf 1)(t,x):=\sum_{n=0}^\infty A^n\phi_a(x)I^{(\nu),n}_{a+} \mathbf 1(t),
\label{eqeqeq}
\end{equation}
$(t,x)\in [a,b]\times\mathbb R^d$.
\end{definition}
\begin{remark}
The function $E_{(\nu)}(A(\cdot)I^{(\nu)}_{a+}\mathbf 1)$ provides  a probabilistic  generalisation, for $\lambda=A$ bounded operator, to the Mittag-Leffler function 
\[
E_\beta(\lambda (t-a)^\beta)=\sum_{n=0}^\infty \frac{\lambda^n (t-a)^{\beta n}}{\Gamma(\beta n+1)}=\sum_{n=0}^\infty \lambda^n\phi_a(x) I^{\beta,n}_{a+} \mathbf 1(t),
\]
where $\beta\in(0,1),$   $\phi_a(\cdot)=1$. 
\end{remark}

\subsubsection{Convergence of the series representation to the stochastic representation}

\begin{theorem}\label{convgenerCaputo} Let $\nu$ be a function satisfying (H0), (H1b), and assume that (H2) holds. Let $A$ be the generator of a Feller semigroup on $C_\infty(\mathbb R^d)$  and $A_\lambda $ its Yosida approximation.\\ 
Fix $g\in C_{\infty}([a,b]\times \mathbb R^d)$ and $\phi_a\in Dom(A)$.\\
Then  for each $t\ge 0$
\[
E_{(\nu)}(A\phi_a \mathbf1^{(\nu)})(t,x)\to  \mathbf E\phi_a(X^{x,A}(\tau_a^{(\nu)}(t))),
\]
and 
\[
  \sum_{n=0}^\infty (I^{(\nu)}_{a+}A_\lambda)^nI^{(\nu)}_{a+}g(t,x)\to \mathbf E\int_0^{\tau_a^{(\nu)}(t)}e^{A s}g(X^{t,(\nu)}_+(s),\cdot)(x)d s 
\]
as $\lambda \to\infty$, uniformly in $x\in\mathbb R^d$.
\end{theorem}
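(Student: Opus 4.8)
The plan is to treat the two displayed limits separately. The second limit is nothing but Theorem \ref{RLconvofseries} applied verbatim (same hypotheses (H0), (H1b) and $g\in C_\infty([a,b]\times\mathbb R^d)$), so no further argument is needed there. All the real work concerns the first limit, whose left-hand side I read as $E_{(\nu)}(A_\lambda\phi_a I^{(\nu)}_{a+}\mathbf 1)=\sum_{n\ge0}A_\lambda^n\phi_a(x)\,I^{(\nu),n}_{a+}\mathbf 1(t)$, i.e. with the bounded Yosida operator $A_\lambda$ in place of the (a priori ill-defined) generalised Mittag-Leffler function of the unbounded $A$.

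The key intermediate identity I would establish is that for every bounded generator $B$ (in particular $B=A_\lambda$),
\[
E_{(\nu)}(B\phi_a I^{(\nu)}_{a+}\mathbf 1)(t,x)=\mathbf E\big[\phi_a(X^{x,B}(\tau^{(\nu)}_a(t)))\big].
\]
To prove this I would first record the moment formula $I^{(\nu),n}_{a+}\mathbf 1(t)=\frac1{n!}\mathbf E[(\tau^{(\nu)}_a(t))^n]$. It follows cleanly from writing the potential as $I^{(\nu)}_{a+}=\int_0^\infty T^{(\nu)a+}_s\,ds$ (the killed RL semigroup of Definition \ref{3processes}-(iii), which agrees with Definition \ref{D-Inu} by Proposition \ref{3processesresults}-(i)): using the semigroup property $T^{(\nu)a+}_{s_1}\cdots T^{(\nu)a+}_{s_n}=T^{(\nu)a+}_{s_1+\cdots+s_n}$ together with $T^{(\nu)a+}_s\mathbf 1(t)=\mathbf P(\tau^{(\nu)}_a(t)>s)$ gives $I^{(\nu),n}_{a+}\mathbf 1(t)=\int_{(\mathbb R^+)^n}\mathbf P(\tau^{(\nu)}_a(t)>s_1+\cdots+s_n)\,ds$, and the inner simplex volume $\mathrm{vol}\{s\in(\mathbb R^+)^n:\sum s_i<\tau^{(\nu)}_a(t)\}=(\tau^{(\nu)}_a(t))^n/n!$ yields the claim. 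Next, since $A_\lambda$ is bounded and $\tau^{(\nu)}_a(t)$ is independent of $X^{x,A_\lambda}$ (Proposition \ref{3st}), I would condition on $\tau^{(\nu)}_a(t)$ to obtain $\mathbf E[\phi_a(X^{x,A_\lambda}(\tau^{(\nu)}_a(t)))]=\mathbf E[(e^{A_\lambda\tau^{(\nu)}_a(t)}\phi_a)(x)]$ and expand the exponential as $\sum_n\frac{\tau^n}{n!}A_\lambda^n\phi_a$. Combining with the moment formula recovers exactly $\sum_n A_\lambda^n\phi_a(x)\,I^{(\nu),n}_{a+}\mathbf 1(t)$.

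Finally I would pass to the limit $\lambda\to\infty$. Conditioning again on $\tau=\tau^{(\nu)}_a(t)$ reduces the claim to
\[
\sup_{x}\big|\mathbf E[(e^{A_\lambda\tau}\phi_a)(x)]-\mathbf E[(e^{A\tau}\phi_a)(x)]\big|\le \mathbf E\big\|e^{A_\lambda\tau}\phi_a-e^{A\tau}\phi_a\big\|\longrightarrow0.
\]
Here, for each fixed value of $\tau$, the Trotter--Kato/Yosida convergence $\|e^{A_\lambda r}\phi_a-e^{Ar}\phi_a\|\to0$ (\cite[Chapter 1, Proposition 2.7]{EK}, exactly as in Lemma \ref{YosidaApprox}) gives the integrand's pointwise convergence, while the contraction bound $\|e^{A_\lambda r}\phi_a-e^{Ar}\phi_a\|\le 2\|\phi_a\|$ provides a constant dominating function; DCT over the law of $\tau$ then yields the limit, and since the dominating bound is $x$-free the convergence is uniform in $x$.

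The step I expect to require the most care is the interchange of expectation and the infinite series in the identity for $B=A_\lambda$, which is precisely where the quantitative estimate of Theorem \ref{T:n-iter} is indispensable: that bound gives $\mathbf E[(\tau^{(\nu)}_a(t))^n]=n!\,I^{(\nu),n}_{a+}\mathbf 1(t)\le C^n$ with $C=(b-a)^\beta/(\beta^2\Gamma(\beta+1))$, whence $\mathbf E[e^{c\tau^{(\nu)}_a(t)}]\le e^{cC}<\infty$ for every $c>0$, justifying the Fubini/dominated-convergence interchange. Mere finiteness of each individual moment would not suffice, so the sharp factorial decay in Theorem \ref{T:n-iter} is the genuine engine here; the moment formula and the Yosida limit are then comparatively routine.
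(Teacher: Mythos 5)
Your proposal is correct, and your reading of the left-hand side as $E_{(\nu)}(A_\lambda\phi_a I^{(\nu)}_{a+}\mathbf 1)$ (with the bounded Yosida operator) is indeed the intended one. Your treatment of the second limit (citing Theorem \ref{RLconvofseries} verbatim) and your final limiting argument (independence of $\tau^{(\nu)}_a(t)$ and the spatial process from Proposition \ref{3st}, Yosida convergence via \cite[Chapter 1, Proposition 2.7]{EK}, dominated convergence with the $x$-free dominating constant) coincide with the paper's proof. Where you genuinely diverge is in how you identify the series with $\mathbf E[\phi_a(X^{x,A_\lambda}(\tau^{(\nu)}_a(t)))]$. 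The paper obtains this identity indirectly from its well-posedness machinery: by Theorem \ref{MThmCapSR} and Theorem \ref{CapSeriesR} the stochastic and series representations are both representations of the same unique generalised solution $u_\lambda$, and since the $g$-dependent parts have already been matched (via Theorems \ref{MThmSeries} and \ref{SRRL}), the $\phi_a$-parts must agree; this detour through generalised solutions is also why the paper invokes hypothesis (H2). You instead prove the identity directly: the moment formula $I^{(\nu),n}_{a+}\mathbf 1(t)=\frac{1}{n!}\mathbf E[(\tau^{(\nu)}_a(t))^n]$ (your simplex-volume computation using $T^{(\nu)a+}_s\mathbf 1(t)=\mathbf P(\tau^{(\nu)}_a(t)>s)$ and the semigroup property is sound), the norm-convergent expansion of $e^{A_\lambda r}\phi_a$, and a Fubini interchange justified by the exponential moment bound $\mathbf E[e^{c\tau^{(\nu)}_a(t)}]\le e^{cC}$, which you correctly extract from the factorial decay in Theorem \ref{T:n-iter} — and you are right that finiteness of individual moments alone would not license the interchange. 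Your route is more self-contained: it bypasses (H2) and the solution-theoretic apparatus for the first limit entirely, and it exposes the probabilistic meaning of the generalised Mittag-Leffler function as $\mathbf E[(e^{A_\lambda\tau^{(\nu)}_a(t)}\phi_a)(x)]$, directly generalising the classical identity $E_\beta(\lambda t^\beta)=\mathbf E[e^{\lambda\tau^{\beta}_0(t)}]$ for the inverse stable subordinator. What the paper's route buys instead is conceptual economy within its own framework: both expressions are recognised as representations of one and the same generalised solution, so no new computation is needed beyond the theorems already established in the section.
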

\begin{proof}
Let $u_\lambda\in B([a,b]\times \mathbb R^d)$ be the generalised solution for problem (\ref{ProblemCaputo}) for $A=A_\lambda $. Let $u\in B([a,b]\times \mathbb R^d)$ be the generalised solution for problem (\ref{ProblemCaputo}) for $A=A $.\\
By Theorem \ref{MThmCapSR} 
\begin{equation}
 u_\lambda (t,x)=\mathbf E\phi_a(X^{x,A_\lambda}(\tau_a^{(\nu)}(t)))+\mathbf E\int_0^{\tau_a^{(\nu)}(t)}e^{A_\lambda s}g(X^{t,(\nu)}_+(s),\cdot)(x)d s,
\label{eqhere0}
\end{equation}
and
\begin{equation}
u(t,x)= \mathbf E\phi_a(X^{x,A}(\tau_a^{(\nu)}(t)))+\mathbf E\int_0^{\tau_a^{(\nu)}(t)}e^{A s}g(X^{t,(\nu)}_+(s),\cdot)(x)d s.
\label{eqhere}
\end{equation}
As a consequence of Theorem \ref{MThmSeries} and Theorem \ref{SRRL} the second term in (\ref{eqhere0}) equals the series representation (\ref{S:AboundedRL}) and by Theorem \ref{RLconvofseries} it converges  as required to the second term in (\ref{eqhere}).\\
The considerations above along with  Theorem \ref{CapSeriesR} imply that the first term in (\ref{eqhere0}) equals the first term on the right-hand side of (\ref{CaputoSeries}). For the first term in (\ref{eqhere0}) observe that by \cite[Chapter 1, Proposition 2.7]{EK} 
\[
e^{A_\lambda s}\phi_a(x)\to e^{As}\phi_a(x),\quad\lambda\to\infty,
\] 
uniformly in $x\in\mathbb R^d$, for each $s\ge 0$. For each $\lambda\ge 0$
\[
\mathbf E\phi_a(X^{x,A_\lambda}(\tau^{(\nu)}_a(t)))=\int_0^\infty e^{A_\lambda s}\phi_a(x)\mu^{\tau^{(\nu)}_a(t)}(d s),
\] 
by independence of $X^{x,A_\lambda}$ and $\tau_a^{(\nu)}(t)$ (Corollary \ref{3st}), where $\mu^{\tau^{(\nu)}_a(t)}(d s)$ is the law of $\tau^{(\nu)}_a(t)$. Also
\begin{equation*}
|e^{A_\lambda s}\phi_a(x)|\le \|\phi_a\|\quad\forall \lambda> 0,\quad \text{and}\quad\int_0^\infty \|\phi_a\|\mu^{\tau_a^{(\nu)}(t)}(d s)\le \|\phi_a\|,
\label{eq:}
\end{equation*}
and the result follows from the application of DCT.
\end{proof}

\begin{remark}
  Theorem \ref{convgenerCaputo} allows us to give meaning to a generalised Mittag-Leffler function for $A$ generator of a Feller semigroup on $C_\infty(\mathbb R^d)$.
\end{remark}

 \section{Generalised fractional  evolution equation: Non-linear case}\label{section6}
Let us now study the well-posedness  for the  non-linear equation given in (\ref{Problem2}).  We introduce a notion of solution and then we  proceed as in   \cite{KV-2} via fixed point arguments.
\begin{definition}\label{Def1-DA}
Let  $\nu$ be  a function satisfying (H0), (H1b). A function  $u :[a,b]\times \mathbb{R}^d \to \mathbb{R}$ is said to be a generalised solution to the non-linear equation (\ref{Problem2})  if  $ u$  is a generalised solution  to the linear equation  (\ref{Problem1}) 
with $g(t,x) := f(t,x, u(t,x))$ for all $(t,x) \in [a,b]\times \mathbb{R}^d$.  
\end{definition}

\begin{lemma}\label{Lemma2-DA}
Let  $\nu$ be a function satisfying conditions (H0), (H1b). Assume that $A$ is the generator of a Feller semigroup on $C_\infty(\mathbb{R}^d)$ and $\phi_a\in Dom(A)$ and that (H2) holds.  Suppose that $f: [a,b]\times\mathbb{R}^d\times \mathbb R \to \mathbb{R}$ is a bounded measurable function. Then, a function $u\in C([a,b]\times \mathbb{R}^d)$ is a generalised solution to equation (\ref{Problem2}) if, and only if, $u$ solves the non-linear integral equation 
\begin{align}\nonumber
u(t,x) = &  \int_0^{\infty} (e^{As} \phi_a)  (x)  \mu^{\tau^{(\nu)}_a(t)}(d s) \nonumber \\ 
&+  \mathbf E\int_0^{\tau^{(\nu)}_a(t)}e^{As}f(X^{t,(\nu)}_{+}(s),\cdot,u(X^{t,(\nu)}_{+},\cdot))(x)ds  \label{Eq10-DA},
\end{align}
where   $ \mu^{\tau^{(\nu)}_a(t)}$ is the law of $\tau_a^{(\nu)}(t)$.
\end{lemma}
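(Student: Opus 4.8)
The plan is to read off the integral equation from the stochastic representation of the linear problem. By Definition \ref{Def1-DA}, $u$ is a generalised solution to the non-linear equation (\ref{Problem2}) exactly when it is a generalised solution to the linear equation (\ref{Problem1}) (equivalently (\ref{ProblemCaputo})) with the frozen source $g(t,x):=f(t,x,u(t,x))$. Since $f$ is bounded and measurable and $u\in C([a,b]\times\mathbb R^d)$ is measurable, this $g$ belongs to $B([a,b]\times\mathbb R^d)$. Because (H1b) implies (H1a) and (H2) is assumed, Theorem \ref{MThmCapSR}-(ii) applies to this fixed $g$: it produces a unique generalised solution of (\ref{ProblemCaputo}) with the stochastic representation (\ref{CaputoSR}). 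The lemma will follow once I show that the right-hand side of (\ref{CaputoSR}), taken with $g=f(\cdot,\cdot,u(\cdot,\cdot))$, equals the right-hand side of (\ref{Eq10-DA}).

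The key step, and the part I expect to require the most care, is this rewriting, which rests on the independence of the decreasing time process $X^{t,(\nu)}_+$ from the space process $X^{x,A}$ supplied by Corollary \ref{IndepCoor} and Proposition \ref{3st}. For the boundary term I would condition on the value of $\tau^{(\nu)}_a(t)$, which is independent of $X^{x,A}$, and use $\mathbf E[\phi_a(X^{x,A}(s))]=(e^{As}\phi_a)(x)$ to get $\mathbf E[\phi_a(X^{x,A}(\tau^{(\nu)}_a(t)))]=\int_0^\infty (e^{As}\phi_a)(x)\,\mu^{\tau^{(\nu)}_a(t)}(ds)$, the first term of (\ref{Eq10-DA}). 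For the source term I would condition on the whole time path (which also fixes $\tau^{(\nu)}_a(t)$) and invoke its independence from $X^{x,A}$: holding the time path fixed, averaging over $X^{x,A}(s)$ turns $f(X^{t,(\nu)}_+(s),X^{x,A}(s),u(X^{t,(\nu)}_+(s),X^{x,A}(s)))$ into $\big(e^{As}f(X^{t,(\nu)}_+(s),\cdot,u(X^{t,(\nu)}_+(s),\cdot))\big)(x)$, with $e^{As}$ acting on the space variable only. A Fubini interchange of the time integral and the expectation, legitimate since the integrand is bounded by $\|f\|$ and $\mathbf E[\tau^{(\nu)}_a(t)]<\infty$, then reproduces the second term of (\ref{Eq10-DA}).

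With the two right-hand sides identified, both directions are immediate. For ``only if'', a generalised solution $u$ to (\ref{Problem2}) is by Definition \ref{Def1-DA} the generalised solution to the linear problem with $g=f(\cdot,\cdot,u)$, hence equals (\ref{CaputoSR}) and therefore solves (\ref{Eq10-DA}). For ``if'', if $u\in C([a,b]\times\mathbb R^d)$ solves (\ref{Eq10-DA}) then $u$ coincides with the representation (\ref{CaputoSR}) for the source $g=f(\cdot,\cdot,u)\in B([a,b]\times\mathbb R^d)$; by the uniqueness in Theorem \ref{MThmCapSR}-(ii), $u$ is the generalised solution to the linear problem with this $g$, so by Definition \ref{Def1-DA} it is a generalised solution to (\ref{Problem2}). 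The only genuine subtlety is ensuring that the independence used in the second paragraph is that of the \emph{entire} path $X^{t,(\nu)}_+$ from $X^{x,A}$, which is exactly the product structure $\Phi_s=T_s\tilde T_s$ underlying Corollary \ref{IndepCoor}; the rest is routine bookkeeping.
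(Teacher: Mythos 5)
Your proposal is correct and follows essentially the same route as the paper: reduce via Definition \ref{Def1-DA} to the linear problem with the frozen source $g=f(\cdot,\cdot,u)\in B([a,b]\times\mathbb R^d)$, then invoke Theorem \ref{MThmCapSR}-(ii) and its uniqueness for both directions. The only difference is that you spell out the identification of the representation (\ref{CaputoSR}) with the right-hand side of (\ref{Eq10-DA}) via independence of the time and space processes, a step the paper leaves implicit (it is carried out explicitly elsewhere, e.g.\ in the proof of Theorem \ref{convgenerCaputo}).
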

\begin{proof} By Definition \ref{Def1-DA},  $u \in C([a,b]\times \mathbb{R}^d)$ is a generalised solution to (\ref{Problem2}) if and only if $u$ is a generalised  solution  to the the linear equation (\ref{Problem1}) with  $g(t,x) := f(t,x, u(t,x))$. Note that if $u\in C([a,b]\times \mathbb{R}^d)$, then $g$ is a measurable and bounded  function on $[a,b]\times \mathbb{R}^d$. Hence  Theorem \ref{MThmCapSR}-(ii) yields   the integral equation  (\ref{Eq10-DA}),  as required.
\end{proof}

Using Weissenger's fixed point theorem we prove that the integral equation (\ref{Eq10-DA}) possesses a unique solution (for a given boundary  $\phi_a$) under the following additional assumption:
\begin{quote}
\textbf{(H3):}  The function $f: [a,b]\times \mathbb{R}^d\times \mathbb{R}\to \mathbb{R}$ is bounded and  fulfils the following Lipschitz condition with respect to the third variable:  for all $(t,x,y_1), (t,x,y_2) \in [a,b] \times \mathbb{R}^d\times\mathbb{R}$, 
\begin{equation}\label{EqLz}
|f(t,x,y_1) - f(t,x,y_2)| < L_f |y_1-y_2|, \end{equation}
for a  constant $L_f>0$ (independent of $t$ and $x$).
\end{quote}

\begin{theorem}\label{MThmNonlinear}
Let $[a,b] \subset \mathbb{R}$ and $\phi_a \in Dom(A)$. Suppose that  $\nu$ is a  function satisfying conditions (H0), (H1b). Suppose that (H2) holds  and  that $f$ is a function  satisfying condition (H3). Then    problem  (\ref{Problem2})  has a unique generalised solution  $u \in C([a,b]\times\mathbb{R}^d)$. 
\end{theorem}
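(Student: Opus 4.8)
The plan is to recast (\ref{Problem2}) as a fixed-point equation and apply Weissinger's fixed-point theorem. By Lemma \ref{Lemma2-DA}, a bounded continuous $u$ is a generalised solution of (\ref{Problem2}) if and only if it is a fixed point of the operator $\Psi$ on the Banach space of bounded continuous functions $C([a,b]\times\mathbb{R}^d)$ (with sup-norm $\|\cdot\|$) given by the right-hand side of (\ref{Eq10-DA}),
\begin{equation*}
(\Psi u)(t,x):=\int_0^\infty (e^{As}\phi_a)(x)\,\mu^{\tau^{(\nu)}_a(t)}(ds)+\mathbf E\int_0^{\tau^{(\nu)}_a(t)}e^{As}f\big(X^{t,(\nu)}_+(s),\cdot,u(X^{t,(\nu)}_+(s),\cdot)\big)(x)\,ds.
\end{equation*}
First I would check that $\Psi$ is a self-map: boundedness is immediate from $\|\phi_a\|<\infty$, the boundedness of $f$ in (H3), the contractivity of $\{e^{As}\}$, and the uniform bound $\sup_t\mathbf E[\tau^{(\nu)}_a(t)]\le\mathbf E[\tau^{(\nu)}_a(b)]<\infty$ from Proposition \ref{3processesresults}-(iii) and Proposition \ref{3st}; continuity of $\Psi u$ in $(t,x)$ follows by dominated convergence from the Feller property of the two semigroups together with the absolute-continuity assumption (H2). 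Since $u\mapsto f(\cdot,\cdot,u(\cdot,\cdot))$ sends bounded functions to bounded measurable ones, the input to $\Psi$ is always admissible for Theorem \ref{MThmCapSR}-(ii).

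The core of the argument is a bound on the iterates $\Psi^n$. Writing $w:=u-v$ and using the Lipschitz bound (H3) together with the positivity and contractivity of each $e^{As}$, I would first obtain the pointwise inequality $|(\Psi u-\Psi v)(t,x)|\le L_f(\mathcal K|w|)(t,x)$, where
\begin{equation*}
(\mathcal K\psi)(t,x):=\mathbf E\int_0^{\tau^{(\nu)}_a(t)}e^{As}\psi(X^{t,(\nu)}_+(s),\cdot)(x)\,ds
\end{equation*}
is a positive linear operator on $B([a,b]\times\mathbb{R}^d)$. Iterating this bound and using the monotonicity of $\mathcal K$ gives $|(\Psi^n u-\Psi^n v)(t,x)|\le L_f^n(\mathcal K^n|w|)(t,x)$ for every $n$. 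The key point is that $\mathcal K$ is dominated by the generalised RL integral: since $X^{t,(\nu)}_+$ is non-increasing and $\{e^{As}\}$ is a contraction, one has $\sup_x|(\mathcal K\psi)(t,x)|\le (I^{(\nu)}_{a+}\bar\psi)(t)$ with $\bar\psi(y):=\sup_x|\psi(y,x)|$, and hence by induction $\sup_x|(\mathcal K^n\psi)(t,x)|\le (I^{(\nu),n}_{a+}\bar\psi)(t)$, exploiting the identity $I^{(\nu)}_{a+}h(t)=\mathbf E\int_0^{\tau^{(\nu)}_a(t)}h(X^{t,(\nu)}_+(s))\,ds$ from Section \ref{section3}.

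Combining these steps and bounding $\bar{\psi}\le\|w\|$ for $\psi=|w|$, I would arrive at
\begin{equation*}
\|\Psi^n u-\Psi^n v\|\le L_f^n\|w\|\,\sup_{t\in[a,b]}(I^{(\nu),n}_{a+}\mathbf 1)(t)\le\frac{1}{n!}\left(\frac{L_f(b-a)^\beta}{\beta^2\Gamma(\beta+1)}\right)^n\|u-v\|=:\alpha_n\|u-v\|,
\end{equation*}
where the last inequality is exactly the estimate furnished by Theorem \ref{T:n-iter} (applied to $f=\mathbf 1$). Since $\sum_{n\ge0}\alpha_n=\exp\!\big(L_f(b-a)^\beta/(\beta^2\Gamma(\beta+1))\big)<\infty$, Weissinger's fixed-point theorem applies and yields a unique fixed point $u\in C([a,b]\times\mathbb{R}^d)$ of $\Psi$, which by Lemma \ref{Lemma2-DA} is the unique generalised solution of (\ref{Problem2}).

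I expect the main obstacle to be establishing the domination $\sup_x|\mathcal K^n\psi|\le I^{(\nu),n}_{a+}\bar\psi$ at the level of the spatial supremum, since this is where the non-increasing nature of $X^{t,(\nu)}_+$, the contractivity of the Feller semigroup generated by $A$, and the stochastic comparison with the inverted $\beta$-stable subordinator underlying Theorem \ref{T:n-iter} must be combined correctly; the continuity half of the self-map verification is also delicate, as it relies on the smoothing provided by (H2). Once these are in place, the summability of $(\alpha_n)_{n\ge0}$ and the conclusion via Weissinger's theorem are routine.
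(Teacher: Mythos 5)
Your proposal is correct and follows essentially the same route as the paper: reduction to the integral equation (\ref{Eq10-DA}) via Lemma \ref{Lemma2-DA}, the same fixed-point operator $\Psi$, iterated Lipschitz bounds controlled by $I^{(\nu),n}_{a+}\mathbf 1$ through Theorem \ref{T:n-iter}, and Weissinger's fixed-point theorem. The only (harmless) cosmetic differences are that the paper works on the subset $B_{\phi_a}=\{u: u(a,\cdot)=\phi_a\}$ and phrases the induction with the truncated norm $\|u-v\|_t=\sup_{z\le t}\|u(z,\cdot)-v(z,\cdot)\|$, whereas you work on all of $C([a,b]\times\mathbb R^d)$ and package the same estimate via the positive operator $\mathcal K$ dominated by $I^{(\nu)}_{a+}$.
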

\begin{proof}
 By Lemma \ref{Lemma2-DA},  the existence of a unique generalised solution to  (\ref{Problem2}) means the existence of a unique solution to the integral equation (\ref{Eq10-DA}). The latter equation  can be rewritten  as a fixed point problem  $u(t,x)= (\Psi u )(t,x)$ for a suitable operator $\Psi$. \\
 
\noindent \textit{Step a)}  Definition of  the operator $\Psi$. Denote by $B_{\phi_a}$ the closed convex subset of $C\left ([a,b]\times \mathbb{R}^d\right)$ consisting of functions  satisfying $f(a) = \phi_a$. This set is a metric space when endowed with the metric induced by the norm on $C\left ([a,b]\times \mathbb{R}^d\right)$.\\
Next, define the operator $\Psi$ on $B_{\phi_a}$ by
\begin{align}\nonumber 
(\Psi u)(t,x) :=&    \int_0^{\infty} (e^{As} \phi_a)  (x)  \mu^{\tau^{(\nu)}_a(t)}(d s) \nonumber \\ &+  \mathbf E\int_0^{\tau^{(\nu)}_a(t)}e^{As}f(X^{t,(\nu)}_{+},\cdot,u(X^{t,(\nu)}_{+},\cdot))(x)ds,    \quad t \in [a,b].
\end{align}
 Note that if $u \in B_{\phi_a}$, then $(\Psi u)(\cdot,x) \in C[a,b]$ for each $x\in \mathbb{R}^d$ and $(\Psi u)(t,\cdot) \in C(\mathbb{R}^d)$ for each $t \in [a,b]$. Further, $(\Psi u)(a, x) = \phi_a(x)$ as $\mu^{\tau^{(\nu)}_a(a)} ( ds) = \delta_0(ds)$. Therefore, $\Psi:   B_{\phi_a} \to B_{\phi_a}$.\\

 \vspace{0.3cm}
\noindent \textit{Step b)}  
Let $\Psi^n$ denote the n-fold iteration of the operator $\Psi$ for  $n\ge 0$, $n \in \mathbb{N}$. For convention $\Psi^0$ denotes the identity operator.  
Note that for $n=1$, the Lipschitz condition  of $f$ and the fact that $e^{As}$ is a contraction semigroup imply
\begin{align*}
\Big | \Psi u - \Psi v \Big |(t,x) & =\Big|\mathbf E\int_0^{\tau_a^{(\nu)}(t)}e^{As}(f(X^{t,(\nu)}_{+},\cdot,u(X^{t,(\nu)}_{+},\cdot)) -f(X^{t,(\nu)}_{+},\cdot,v(X^{t,(\nu)}_{+},\cdot)) (x)ds  \Big|   \\
&\le \mathbf E\int_0^{\tau_a^{(\nu)}(t)}e^{As}(|f(X^{t,(\nu)}_{+},\cdot,u(X^{t,(\nu)}_{+},\cdot)) -f(X^{t,(\nu)}_{+},\cdot,v(X^{t,(\nu)}_{+},\cdot)|) (x)ds\\
&\le L_f \|u-v\|_t I^{(\nu)}_{a+}\mathbf 1(t),
\end{align*}
where \[  \|u-v\|_t  := \sup_{z \le t} \|u(z,\cdot) - v(z,\cdot)\|, \quad t \in  [a,b], \]
and $L_f$ is the Lipschitz constant of the function $f$. Proceeding by induction we can  prove that 
\begin{align*}\label{Eq14}
| \Psi^n u(t,x)- \Psi^n v(t,x)| \le  \|u-v\|_t  L_f^n \left( I_{a+}^{(\nu), n} \mathbf{1}\right) (t),\quad \quad n\ge 0,
\end{align*} 
where   $I_{a+}^{(\nu), n}$ is the $n$th fold iteration of the generalised fractional operator $I_{a+}^{(\nu)}$.  
Moreover, by Theorem \ref{T:n-iter},  we know that
\[ \sum_{n=0}^{\infty}L_f^n \left( I_{a+}^{(\nu), n} \mathbf{1}\right) (t)  \le  \left (\frac{L_f^n (b-a)^{\beta}}{ \beta^2 \Gamma (\beta+1)} \right )^n \frac{1}{n!} =: \alpha_n.\]  
Hence, 
 \begin{equation*}\label{Eq12}
 \|\Psi^n  u - \Psi^n v\| \le \alpha_n \|u-v\|,
 \end{equation*}
for every $n \ge 0 $ and every $u,v \in B_{\phi_a}$, where $\alpha_n \ge 0$ and $\sum_{n=0}^{\infty} \alpha_n$ converges.

 Therefore, the Weissinger fixed point theorem   \cite[Theorem D.7]{kai} guarantees  the existence of a unique fixed point $u^*\in B_{\phi_a}$ to the integral equation (\ref{Eq10-DA}), which in turn implies the existence of a generalised solution to (\ref{Problem2}), as  required.   
\end{proof}

\section{Appendix}\label{Appendix}

\subsection{Proof of Theorem \ref{MThmSR}}\label{proofMThmSR}
(i). It is easy to show that $D\tilde D\subset C_\infty(X\times \tilde X) $, $\Phi_t:=T_t\tilde T_t$ is a well-defined continuous  linear operator on $C_\infty(X\times \tilde X)$, $\{\Phi_t\}_{t\ge 0}$ is a uniformly bounded semigroup. That $T_t\tilde T_t=\tilde T_t T_t$ follows from Riesz-Markov representation Theorem (\cite[Theorem 1.7.4]{KV0}). \\
That $\mathcal{L}$ is a dense subspace of $C_\infty(X\times \tilde X)$ follows from Stone-Weierstrass Theorem (for locally compact spaces, see \cite[44A.I]{willard70}) by taking as a sub-algebra the linear span of $C^\infty_\infty(X)C^\infty_\infty(\tilde X)\subset \mathcal L$ where
\begin{equation*}
C^\infty(X)C^\infty(\tilde X):=\{f: f=g\tilde g,\ C^\infty_\infty(X),\ \tilde g\in C^\infty_{\infty}(\tilde X) \},
\label{eq:}
\end{equation*}
as it separates points and it does not vanish on $X\times \tilde X$.\\
Let $f=\sum_{n=1}^N\lambda_ng_n\tilde g_n\in \mathcal{L}$. Then 
\begin{align*}
\|\Phi_t f-f\|\le &\sum_{n=0}^N|\lambda_n|\| T_tg_n\tilde T_t\tilde g_n- g\tilde g\|\\
\le& \sum_{n=0}^N|\lambda_n|(\| T_tg_n\tilde T_t\tilde g_n- Tg_n\tilde g\|+\| T_tg_n\tilde g_n- g\tilde g\|)\\
\le& \sum_{n=0}^N|\lambda_n|(\|T_t\|\|g_n\|\| \tilde T_t\tilde g_n- \tilde g\|+\|\tilde g_n\|\| T_tg_n- g\|),
\label{eq:}
\end{align*}
which can be made arbitrarily small by choice of $t$ small, using the strong continuity and the uniform boundedness  of $(T_t)$ and $(\tilde T_t)$.\\
As $\mathcal{L}$ is dense in $C_\infty(X\times\tilde X)$, it  follows that $\Phi_t$ strongly continuous on $C_\infty(X\times\tilde X)$.\\
The semigroup $\{\Phi_t\}_{t\ge 0}$ is invariant on $\mathcal{L}$ as $T$ in invariant on $D$ and $\tilde T$ is invariant on $\tilde D$ and 
\begin{equation*}
\Phi_t f= \sum_{n=1}^N\lambda_n T_tg_n \tilde T_t\tilde g_n,\quad f\in\mathcal L.
\label{eq:}
\end{equation*}
We now show that $\mathcal{L}$ belongs to the domain of the generator of $\{\Phi_t\}_{t\ge0}$.\\
It is enough to show that $D\tilde D$ belongs to the domain of the generator of $\Phi_t$ as the domain of a generator is closed under linear combinations.\\ 
To do so we show that  $t^{-1}(\Phi_t g\tilde g-g\tilde{g})$ converges to $\tilde g Ag+g\tilde A\tilde g$  as $t\to 0$. Compute
\begin{align*}
|\ t^{-1}(\Phi_t g\tilde g-g\tilde g)-\tilde g Ag-g\tilde A\tilde g|\le &|  t^{-1}(T_t g\tilde T_t\tilde g-g\tilde g) \pm t^{-1}T_tg\tilde g\pm T_tg \tilde A\tilde g-\tilde g Ag-g\tilde A\tilde g|\\
\le &|t^{-1} (T_t g\tilde T_t\tilde g -T_tg\tilde g)- T_tg \tilde A\tilde g|\\
&+|  t^{-1}(T_t g\tilde T_t\tilde g-g\tilde g) + t^{-1}T_tg\tilde g+ T_tg \tilde A\tilde g-\tilde g Ag-g\tilde A\tilde g|\\
\le &\|T_t g\|_{X}\|t^{-1} (\tilde T_t\tilde g -\tilde g)-\tilde A\tilde g\|_{\tilde X}\\
&+| - t^{-1}g\tilde g + t^{-1}T_tg\tilde g+ T_tg \tilde A\tilde g-\tilde g Ag-g\tilde A\tilde g|\\
\le &\|g\|_{X}\|t^{-1} (\tilde T_t\tilde g -\tilde g)-\tilde A\tilde g\|_{\tilde X}\\
&+\|\tilde g\|_{\tilde X}\|  t^{-1}(T_tg- g +)- Ag\|_{X}+\|\tilde A\tilde g\|_{\tilde X} \|T_tg -g\|_{X},
\label{eq:}
\end{align*}
which can be made arbitrarily small independently of $(x,\tilde x)\in X\times \tilde X$ by choosing $t$ small by strong continuity and the uniform boundedness  of $(T_t)$ and $(\tilde T_t)$ (here the notation $\|h\|_Y$ means the supremum norm of the function $h:Y\to\mathbb R$).\\
Therefore we have shown that $\mathcal{L}$ is a dense invariant subspace of $Dom(L)$ and by \cite[Proposition 1.9.1]{KV0} $\mathcal{L}$ is a core for the generator of $\Phi_t$, and $L=A+\tilde A$ on $\mathcal{L}$.\\

(ii). That the semigroup $\{\Phi_t\}_{t\ge 0}$ is a Feller semigroup if $\{T_t\}_{t\ge 0}$ and $\{\tilde T_t\}_{t\ge 0}$ are Feller semigroups follows easily. The same for the sub-Feller case.\\

(iii). The case of $C_{a,\infty}([a,b]\times \tilde X)$ has the same proof as above apart from the statement about the density of $\mathcal L$. Briefly, to obtain the density of the respective set $\mathcal L$, consider the linear span of the product of smooth functions in $C_{\infty}((-\infty,b]\times \tilde X)$, apply Stone-Weierstrass as above, then use an isometric isomorphism between $C_{a,\infty}((a,b]\times \tilde X)$ and $C_{a,\infty}((-\infty,b]\times \tilde X)$.

\subsection{Proof of Proposition \ref{3processesresults}-(i)}\label{proof3processesresults}

Fix $h>0$ and consider the \emph{bounded} generators $-D^{(\nu),h}_{+}$, $-D^{(\nu),h}_{a+}$ and $-D^{(\nu),h}_{a+\ast}$ defined as 
\begin{align*}
-D^{(\nu),h}_{+}f(x)&=\int_h^{\infty}(f(x-y)-f(x))\nu(x,y)dy,\\
-D^{(\nu),h}_{a+}f(x)&=\int_h^{\max\{(x-a), h\}}(f(x-y)-f(x))\nu(x,y)dy-f(x)\int_{\max\{(x-a), h\}}^\infty\nu(x,y)dy,\\
-D^{(\nu),h}_{a+\ast}f(x)&=\int_h^{\max\{(x-a), h\}}(f(x-y)-f(x))\nu(x,y)dy+(f(a)-f(x))\int_{\max\{(x-a), h\}}^\infty\nu(x,y)dy,
\end{align*}
acting on the spaces $C_\infty((-\infty,b])$,  $C_a([a,b])$, $C([a,b])$, respectively. Then 
\begin{equation}
T^{(\nu)+,h}_s=\sum_{n=0}^\infty \frac{t^n}{n!}(-D^{(\nu),h}_{+})^n,\quad T^{(\nu)a+,h}_s=\sum_{n=0}^\infty \frac{t^n}{n!}(-D^{(\nu),h}_{a+})^n,\quad  T^{(\nu)a+*,h}_s=\sum_{n=0}^\infty \frac{t^n}{n!}(-D^{(\nu),h}_{a+*})^n
,
\label{expform}
\end{equation}

$s\in \mathbb R^+$, are the respective semigroups. We first prove the second part of  Proposition \ref{3processesresults}-(i).\\
The key observation is that 
\[
-D^{(\nu),h}_{+}f(t)=-D^{(\nu),h}_{a+}f(t)=-D^{(\nu),h}_{a+\ast}f(t),\quad t>a,
\]
 if $f\in \{f(x)=0 \ \forall x\le a\}\cap C_\infty((-\infty,b])$, and
\begin{align*}
-D^{(\nu),h}_{+}f&\in \{f(x)=0 \ \forall x\le a\}\cap C_\infty((-\infty,b]),\\
-D^{(\nu),h}_{a+}f&\in \{f(x)=0 \ \forall x\le a\}\cap C_a([a,b]),\\
-D^{(\nu),h}_{a+*}f&\in \{f(x)=0 \ \forall x\le a\}\cap C_a([a,b]).
\end{align*}
 Hence for every $n\in\mathbb N$,
\begin{equation}
(-D^{(\nu),h}_{+})^nf(t)=(-D^{(\nu),h}_{a+})^nf(t)=(-D^{(\nu),h}_{a+\ast})^nf(t),\quad t>a,
\label{eqeq}
\end{equation}
if $f\in\{f(x)=0 \ \forall x\le a\}$.\\
The identities in (\ref{eqeq}) imply that   
\begin{equation}
T^{(\nu)+,h}_sf (t)= T^{(\nu)a+,h}_sf (t)=T^{(\nu)a+*,h}_sf (t),\quad t>a,\ s\in\mathbb R^+,
\label{semiequal}
\end{equation}
 if $f\in \{f(x)=0 \ \forall x\le a\}$, as each of the semigroups is given by the exponentiation formula in (\ref{expform}).  By the proofs of \cite[Theorem 5.1.1]{KV0} and \cite[Theorem 4.1]{KVFDE} 
\begin{equation}
T^{(\nu)+,h}_sf (t)\to T^{(\nu)+}_sf (t),\quad h\to 0,
\label{freeconv}
\end{equation}

 for each $s\ge 0$,  $t\in(a,b],$ $ f\in C^1_\infty((-\infty,b])$, and 
\begin{equation}
T^{(\nu)a+,h}_sf (t)\to T^{(\nu)a+}_sf (t),\quad T^{(\nu)a+*,h}_sf (t)\to T^{(\nu)a+*}_sf (t),\quad h\to 0,
\label{intconv}
\end{equation}
 for each $s\ge 0$,  $t\in(a,b],$ $ f\in C^1([a,b])\cap C_a([a,b])$. \\
Hence, If $f\in C^1((-\infty,b])\cap\{f(x)=0 \ \forall x\le a, f'(a)=0\}$, then (\ref{semiequal}) holds and we also have the convergence in (\ref{freeconv}) and (\ref{intconv}).\\
Now approximate point-wise from below the indicator function of any interval in $(a,b]$ with functions in $ C^1([a,b])\cap\{f(x)=0 \ \forall x\le a, f'(a)=0\}$ to obtain the second part of  Proposition \ref{3processesresults}-(i).\\

The first part of  Proposition \ref{3processesresults}-(i) follows similarly after observing that 
\[
T^{(\nu)+,h}_sf_y (t)= T^{(\nu)a+,h}_sf_y (t)=T^{(\nu)a+*,h}_sf_y (t)=0\quad \forall a<t\le y
\]
for any $f\in \{f(x)=0 \ \forall x\le y\}\cap C_\infty((-\infty,b])$. In the last step  we approximate   the indicator function $\mathbf 1(\cdot >y)$ with functions in $ C^1((-\infty,b])\cap\{f(x)=0 \ \forall x\le y\}$, $y>a$ to obtain that for every $s\in\mathbb R^+$
\[
0=T^{(\nu)+}_s\mathbf 1(\cdot>y) (t)=\mathbf P[X^{t,(\nu)}_{+}(s) >y]=\mathbf P[X^{t,(\nu)}_{a+} (s)>y]=\mathbf P[X^{t,(\nu)}_{a+*}(s) >y], \quad 
\]
if $t\le y$.

\let\oldbibliography\thebibliography
\renewcommand{\thebibliography}[1]{\oldbibliography{#1}
\setlength{\itemsep}{0pt}}

\end{document}